\RequirePackage{fix-cm}
\documentclass[smallextended,envcountsame,envcountsect]{svjour3}
\smartqed  %
\usepackage{amsmath}
\usepackage{amssymb}
\usepackage{graphicx}
\usepackage{xcolor}
\usepackage{tikz}
\usetikzlibrary{arrows.meta,positioning,calc}
\usepackage{subcaption}
\usepackage{booktabs}
\usepackage{pifont}
\numberwithin{equation}{section}
\usepackage[hypertexnames=false]{hyperref}

\usepackage{geometry}
\newcommand{\R}{{\mathbb{R}}}

\journalname{}


\newcommand{\doi}[1]{\urlstyle{rm}\url{https://doi.org/#1}}

\makeatletter
\spn@wtheorem{assumption}{Assumption}{\bfseries}{\itshape}
\makeatother

\ifpdf
\hypersetup{
   pdftitle={A true single-level reformulation for pessimistic bilevel optimization},
   pdfauthor={Oliver Stein and Alain Zemkoho}
}
\fi

\begin{document}

\title{A true single--level reformulation for pessimistic bilevel optimization}

\author{Oliver Stein         \and
        Alain Zemkoho
}

\institute{Oliver Stein \at
              Institute for Operations Research (IOR)\\
              Karlsruhe Institute of Technology\\
              \email{stein@kit.edu}\\
              ORCID: \url{https://orcid.org/0000-0001-9514-6317}
           \and
           Alain Zemkoho \at
            School of Mathematical Sciences\\
            University of Southampton\\
            \email{a.b.zemkoho@soton.ac.uk}\\
            ORCID: \url{https://orcid.org/0000-0003-1265-4178}
}

\date{}

\maketitle

\begin{abstract}
We propose a single-level reformulation (SLR) for the pessimistic bilevel optimization problem that does not rely on complementarity conditions or optimal value functions. For this reason, we refer to it as a \emph{true} single-level reformulation (tSLR). A remarkable consequence is that this reformulation can satisfy the classical linear independence constraint qualification, despite the fact that even the weaker Mangasarian--Fromovitz constraint qualification is known to systematically fail for standard single-level reformulations of both optimistic and pessimistic bilevel programs. We leverage on these constraint qualifications to construct new necessary optimality conditions for pessimistic bilevel optimization. The reformulation also has a striking limitation: under the assumptions of our analysis, the classical second-order sufficient condition fails at every Karush--Kuhn--Tucker point of the problem.
Nevertheless, preliminary numerical experiments demonstrate that algorithms based on the proposed tSLR can outperform existing approaches for pessimistic bilevel optimization. Overall, the proposed framework suggests that pessimistic bilevel programs may be considerably more tractable than previously believed and need not be inherently more difficult to solve than their optimistic counterparts.
\keywords{Pessimistic bilevel optimization \and Single-level reformulation  \and Constraint qualifications \and Optimality conditions \and Numerical methods}
\subclass{90C26 \and 90C30 \and  90C46 \and 90C47}
\end{abstract}
\begin{acknowledgements}
The authors are grateful to Alireza Kabgani and Martin Schmidt for fruitful discussions on an earlier version of this manuscript. The work of the second author was partly funded by the Alexander von Humboldt Foundation, through an Alexander von Humboldt Research Fellowship for Experienced Researchers held within the Continuous Optimization Chair at the Institute for Operations Research, Karlsruhe Institute of Technology (KIT).
\end{acknowledgements}
\setcounter{tocdepth}{2}

\section{Introduction}\label{sec:Intro}
In this paper, we consider the bilevel optimization problem 
\begin{equation}\label{P}\tag{BOP}
   \text{``}\underset{x}\min\text{''}~F(x,y) \; \mbox{ s.t. }\;  x\in X,\;\; y\in S_L(x):=\underset{\quad z\in Y(x)}{\text{argmin}}~f(x,z),
\end{equation}  
{where $F : \mathbb{R}^n\times \mathbb{R}^m \rightarrow \mathbb{R}$  (resp. $f : \mathbb{R}^n\times \mathbb{R}^m \rightarrow \mathbb{R}$)  corresponds to the upper-level (resp. lower-level) objective function, while  the set-valued mapping $Y: X \rightrightarrows \mathbb{R}^m$ describes the lower-level feasible set}
\begin{equation}\label{eq:Y(x)}
Y(x):=\left\{y\in \mathbb{R}^m\left|\;\, g(x, y)\leq 0, \;\; h(x, y)=0\right.\right\}    
\end{equation}
for all $x\in X$. Here, $g$ and $h$ are vector-valued functions from $\mathbb{R}^n\times \mathbb{R}^m$ to $\mathbb{R}^p$ and $\mathbb{R}^q$, respectively. Note that the upper-level (resp. lower-level) player is often also called the \textit{leader} (resp. \textit{follower}). 

{In problem \eqref{P}, the vector $x$ (resp. $y$) represents the upper-level (resp. lower-level) variable. Throughout the paper, the lower-level variable is \textit{continuous}. However, for the upper-level variable, it can be \textit{mixed-integer}; i.e., we could assume that the upper-level feasible set is such that $X\subset \mathbb{Z}^{n_I} \times \mathbb{R}^{n_C}$ with $n_I + n_C =n$ for the construction of the main model proposed in this paper. Hence, throughout the text, unless otherwise stated, it would be assumed that $X\subset \mathbb{Z}^{n_I} \times \mathbb{R}^{n_C}$.} 

Overall, problem \eqref{P} corresponds to the \textit{upper-level problem}, while the set-valued mapping $S_L: X \rightrightarrows \mathbb{R}^m$ collects all the optimal solutions of the \textit{lower-level problem}
\begin{equation}\label{eq:LL}\tag{\text{$LL(x)$}}
    \min_z\,f(x,z)\ \text{ s.t. }\ z\in Y(x)
\end{equation}
for a given upper-level variable $x\in X$.

If the lower-level player has a unique optimal solution for all choices of the upper-level player, i.e., $|S_L(x)|=1$ (with $|.|$ denoting the cardinality of the corresponding set) for all $x\in X$, then problem \eqref{P} can be reduced to the optimization problem
\begin{equation}\label{eq:P-i}
    \min\,\mathcal{F}(x)\; \mbox{ s.t. }\; x\in X
\end{equation}
with real-valued objective $\mathcal{F}(x):= F(x, y(x))$ and $S_L(x)=\{y(x)\}$ for all $x\in X$. Given that the analytical expression of the function $y(\cdot): X \rightarrow \mathbb{R}^m$ is generally unknown, \eqref{eq:P-i} is usually referred to as the implicit function reformulation of  problem \eqref{P}. Problem \eqref{eq:P-i} has been one of the main frameworks to develop solution algorithms to solve bilevel programs (see, e.g., \cite{dempe2002foundations}) and has become even more widely used recently in the context of machine learning applications \cite{grazzi2026bilevel}.

If it is not possible to ensure that the lower-level problem has a unique optimal solution for all upper-level variables, then problem \eqref{P} is not well-posed; hence, the quotation marks on the min operator in \eqref{P} are commonly used to reflect the ambiguity in the upper-level minimization in this case. To make the problem mathematically tractable in this situation, two interpretations to capture the nature of the interaction between the upper- and lower-level players have been widely used in the literature. The most popular one is the optimistic model
\begin{equation}\label{eq:Optimistic-Problem}\tag{\text{$P_o$}}
      \min\,\varphi_o(x)\; \mbox{ s.t. }\; x\in X
\end{equation}
with
\begin{align}\label{eq:def_phi_o}
    \varphi_o(x):=\underset{y\in S_L(x)}{\min}F(x, y),
\end{align}
where it is assumed that whenever the lower-level player has more than one option for a given choice of the upper-level player, they pick one that is in favor of the latter; hence, problem \eqref{eq:Optimistic-Problem} captures full cooperation between both players. 

If cooperation is not possible between the two players, then the leader, as a risk-averse player, will try to protect themself against potential worst choices from the follower by solving the pessimistic bilevel optimization problem 
\begin{equation}\label{eq:Pessimistic-Problem}\tag{\text{$P_p$}}
      \min\,\varphi_p(x)\; \mbox{ s.t. }\; x\in X
\end{equation}
with
\begin{align}\label{eq:def_phi_p}
\varphi_p(x):=\underset{y\in S_L(x)}{\max}F(x, y).
\end{align}
The positions considered in problems \eqref{eq:Optimistic-Problem} and \eqref{eq:Pessimistic-Problem} can be seen as extreme, as they reflect either a situation where there is cooperation or not, as represented by problems \eqref{eq:Optimistic-Problem} and \eqref{eq:Pessimistic-Problem}, respectively. 
In consideration of this, many papers have recently considered a partial cooperation model, which could be obtained by minimizing a convex combination of $\varphi_o$ and $\varphi_p$ under the upper-level constraint. For more details on such models; see, e.g., \cite{aboussoror2017strong}. Also see \cite{zemkoho2016solving} for a set-valued optimization approach to tackle problem \eqref{P} when $|S_L(x)|>1$ for some value(s) of $x\in X$.

Overall, in terms of approaches to tackle \eqref{P}, the most widely used models are \eqref{eq:P-i}, especially since the recent breakthroughs in machine learning, and  problem \eqref{eq:Optimistic-Problem}, however, instead through its standard optimistic reformulation \cite{dempe2020bilevel}. As for the pessimistic bilevel optimization problem \eqref{eq:Pessimistic-Problem}, it has not attracted the same level of attention as the optimistic one. However, recently, there has started to be some growing interest in the development of algorithms for  problem \eqref{eq:Pessimistic-Problem}; the next section of this paper provides a detailed overview of the state of the literature on approaches to tackle the problem. 
As it will become clear there, the existing approaches to tackle problem \eqref{eq:Pessimistic-Problem} can be organized in two main categories: (i) moving the difficult component of the problem (i.e., in particular, the inclusion $y\in S_L(x)$) to the feasible set of a constrained optimization-based transformation of the problem through a semi-infinite programming model \cite{wiesemann2013pessimistic} or models closely related to the standard pessimistic problem introduced in \cite{lampariello2019standard}; and (ii) viewing the problem \eqref{eq:Pessimistic-Problem} as that of effectively minimizing the two-level value function (TLVF) $\varphi_p$ subject $x\in X$; see, e.g., 
\cite{benchouk2025scholtes,cao2026single,vcervinka2013computation}.

Fewer works have been dedicated to the development of methods for \eqref{eq:Pessimistic-Problem} because it is seen as a very difficult problem class. 
A main challenge in solving a pessimistic bilevel problem \eqref{eq:Pessimistic-Problem} lies in the fact that $\varphi_p$ may not be lower semicontinuous, resulting in unsolvability due to a non-attained finite infimum \cite{benchouk2025scholtes,zemkoho2016solving}. We discuss this issue and its relation to the proposed single--level reformulation in Subsection~\ref{sec:jumps}. %
Hence, it is very easy to find examples of bilevel programs where problem \eqref{eq:Pessimistic-Problem} has no optimal solutions, while its optimistic counterpart \eqref{eq:Optimistic-Problem} does have optimal solutions. In this paper, we introduce a reformulation of problem \eqref{eq:Pessimistic-Problem}, which is likely to change this perception. In fact, leveraging the Wolfe duality associated to the intermediate-level problem
\begin{equation}\label{eq:IL}\tag{\text{$IL(x)$}}
    \max_y\,F(x,y)\ \text{ s.t. }\ y\in S_L(x),
\end{equation}
we construct a new reformulation of the pessimistic bilevel program with the following key features:
\\[1ex]
\textbf{(A)} Our reformulation of problem \eqref{eq:Pessimistic-Problem} is a constrained optimization problem that involves neither complementarity constraints nor an optimal value function. For this reason, we refer to it as a \emph{true single-level reformulation} (tSLR). To the best of our knowledge, all existing single-level reformulations of \eqref{eq:Pessimistic-Problem} rely either on complementarity conditions or on an optimal value function; see Section~\ref{sec:Existing reformulations and numerical algorithms}. The same observation applies to optimistic bilevel optimization, where existing reformulations are based either on complementarity conditions or on the lower-level value function
\begin{align}\label{eq:phi_L}
\varphi_L(x):=\min_{z\in Y(x)}f(x,z).
\end{align}
\textbf{(B)} The tSLR introduced in this paper does not involve implicit variables; that is, variables that are part of the  constraint set but not the objective function. Such variables are a major source of numerical difficulties when solving several existing SLRs of the standard optimistic bilevel optimization problem \cite{dempe2025duality}. {Moreover, our proposed tSLR can accommodate mixed-integer upper-level variables.}
${}$\\[1ex]
\textbf{(C)} A remarkable feature of the proposed tSLR is its compatibility with classical constraint qualifications. In particular, we prove that the linear independence constraint qualification (LICQ) can hold for a broad class of pessimistic bilevel programs. This stands in sharp contrast with existing SLRs of bilevel optimization, for which the weaker Mangasarian--Fromovitz constraint qualification (MFCQ) is known to fail systematically, both in the optimistic and pessimistic settings; {see, e.g., \cite{dempe2020bilevel,dempe2002foundations,ye2010new}.}

Leveraging the possibility that these constraint qualifications can hold for our tSLR, we derive completely new classes of necessary optimality conditions for \eqref{eq:Pessimistic-Problem}. However, this paper reveals a fundamental limitation for our tSLR: we prove that it systematically fails to satisfy the classical second-order sufficient condition (see Section~\ref{sec:Preliminaries} for the definition of the concept) at its Karush-Kuhn-Tucker points within the framework considered in this paper. Hence, despite its compatibility with classical constraint qualifications, our tSLR  violates the classical second-order sufficient condition.

Before continuing further, note that the set-valued mapping $S_p: X \rightrightarrows \mathbb{R}^m$ collects all the optimal solutions of the intermediate-level problem \eqref{eq:IL} for any given $x\in X$; that is, %
\begin{equation}\label{eq:Two-level-Function-S_p}
S_p(x):=\underset{y\in S_L(x)}{\arg\max}\,F(x, y).
\end{equation}
To focus our attention on the main ideas, we use the following blanket assumption throughout the paper. Sufficient conditions for it in terms of the problem data may be found, e.g., in \cite{lampariello2019standard}.
\begin{assumption}\label{ass:Sp_domain} For all $x\in X$, it holds that $S_p(x) \neq \emptyset$.
\end{assumption}
{Under this assumption, not only the two-level value function $\varphi_p$  \eqref{eq:def_phi_p}
is real-valued on $X$ but, in view of $S_p(x)\subset S_L(x)$ for all $x\in X$, this is also the case for 
the lower-level value function $\varphi_L$ \eqref{eq:phi_L}.}

Throughout the paper, we use the following basic optimal solution concept for problem \eqref{eq:Pessimistic-Problem}. 
\begin{definition}\label{def:local_optimal_sol}
A point $\bar{x}\in X$ will be said to be a local optimal solution of problem \eqref{eq:Pessimistic-Problem} if there exists a neighborhood $U$ of $\bar{x}$ such that 
\begin{equation}\label{eq:LocalOptimal_Sol}
\varphi_{p}(\bar{x})\leq\varphi_{p}(x) \;\,\mbox{ for all }\;\,  x\in X\cap U.
\end{equation}
\end{definition}
Similarly, $\bar{x}\in X$ will be said to be a global optimal solution for \eqref{eq:Pessimistic-Problem}
  if \eqref{eq:LocalOptimal_Sol} holds with $U=\mathbb{R}^n$.

Of course, this is not the only optimal solution notion for problem \eqref{eq:Pessimistic-Problem}; for a detailed study of solution concepts for the problem, interested readers are referred to the article \cite{aussel2019pessimistic}. 

The following example gives a flavor of the extraordinary nature of the true single-level reformulation of problem \eqref{eq:Pessimistic-Problem} introduced in this paper. %
\begin{example}\label{ex:quadratic}
Consider a class of problem \eqref{eq:Pessimistic-Problem} with the upper- and lower-level objective functions $F :\mathbb{R}^n\times \mathbb{R}^m \rightarrow \mathbb{R}$ and $f :\mathbb{R}^n\times \mathbb{R}^m \rightarrow \mathbb{R}$ respectively defined  by
\begin{equation}\label{eq:F(x,y)}
    F(x,y):=\frac{1}{2}
\begin{bmatrix}
	x^\top & y^\top
\end{bmatrix} 
\left[\begin{array}{lr}
   Q_{11}  &  Q_{12}\\[1ex]
   Q^\top_{12}  & Q_{22}
      \end{array}\right]
\begin{bmatrix}
	x \\ y
\end{bmatrix}  -  c^\top \begin{bmatrix}
	x \\ y
\end{bmatrix} \;\, \mbox{ and } \;\, f(x,y):=(c^f)^\top y
\end{equation}
with the data vectors $c:=(c^\top_x, c^\top_y)^\top\in \mathbb{R}^{n+m}$ and $c^f \in \mathbb{R}^m$, the matrices $Q_{11}\in \mathbb{R}^{n\times n}$, $Q_{12}\in\R^{n\times m}$, as well as a negative semi-definite symmetric  matrix $Q_{22}\in \mathbb{R}^{m\times m}$. Additionally, let the upper- and lower-level feasible sets be given by
\begin{equation}\label{eq:X-and-Y(x)-Linear}
   {X:=\left\{x\in \mathbb{Z}^{n_I} \times \mathbb{R}^{n_C}\left|~A^Gx \leq b^G \right.\right\}}\;\mbox{ and }\; Y(x):=\left\{y\in \mathbb{R}^m\left|~A^gx + B^gy \leq b^g \right.\right\},
\end{equation}
respectively, with the data vectors $b^G\in \mathbb{R}^{r}$ and $b^g\in \mathbb{R}^{p}$, as well as the matrices $A^G\in \mathbb{R}^{r\times n}$, $A^g\in \mathbb{R}^{p\times n}$, and $B^g\in \mathbb{R}^{p\times m}$. 
Our tSLR reduces in this case to the following classical-type quadratic optimization problem with linear constraints: 
\begin{equation}\label{eq:QP_Reform}
    \begin{array}{rl}
   \underset{x, y, z, u,  w}{\min} & F(x,y) - u^\top \left(A^gx + B^gz - b^g\right) - w (y-z)^\top c^f  \\[1ex]
     \mbox{s.t.} & Q^\top_{12}x + Q_{22}y  -  \left(B^g\right)^\top u  =  c_y, \;\, {x\in \mathbb{Z}^{n_I} \times \mathbb{R}^{n_C}},\\[1ex] 
                & A^Gx \leq b^G, \;\, A^gx + B^gz \leq b^g, \;\, u\geq 0, \;\, w\geq 0. %
\end{array}
\end{equation}
Without any additional assumption, the $x$-component of any global optimal solution of this problem is a global optimal solution of the corresponding version of problem \eqref{eq:Pessimistic-Problem}, and conversely, for any global optimal solution $x$ of the corresponding \eqref{eq:Pessimistic-Problem}, there are possibly many choices of the quadruple $(y, z, u, w)$ such that $(x, y, z, u, w)$ is globally optimal for problem \eqref{eq:QP_Reform}.  Obviously,  problem \eqref{eq:QP_Reform} can be solved by any {suitable} quadratic optimization solver. A reformulation like this (without any complementarity constraint or optimal value function) is not possible in optimistic bilevel optimization, and does not seem to have been obtained before for pessimistic bilevel optimization. \qed

In Section \ref{sec:Single-level-Reform}, we introduce a general version of our tSLR model and conduct a rigorous analysis establishing global and local relationships between it and the original problem \eqref{eq:Pessimistic-Problem}. 
\end{example}

For the remainder of the paper, note that in the next section, we conduct a detailed survey of the different existing reformulations and solution methods to tackle problem \eqref{eq:Pessimistic-Problem}. Subsequently, Section \ref{sec:Preliminaries} covers the basic mathematical tools that will be used for the analysis in Sections \ref{sec:Single-level-Reform}--\ref{sec:Algorithmic framework for numerical computations}. More specifically, in Section \ref{sec:Single-level-Reform}, we introduce the motivational background and construction process for our tSLR model, establish the global and local relationship with problem \eqref{eq:Pessimistic-Problem}, and address insights in terms of the behavior of this reformulation and some practical implications. Subsequently, Section \ref{sec:Optimality conditions} is dedicated to the analysis of the behavior of classical MFCQ and LICQ when applied to tSLR in the case of purely continuous upper level variables; in particular, we construct tractable frameworks ensuring the automatic fulfillment of these constraint qualifications. Subsequently, we use the MFCQ and LICQ to derive completely new \textit{first order} and \textit{second order} necessary optimality conditions for problem \eqref{eq:Pessimistic-Problem}, respectively, under conditions not affordable even in the context of the optimistic bilevel program.  In Section \ref{sec:Optimality conditions}, however, we also prove that the  classical second order sufficient condition systematically fails for the tSLR problem developed in this paper. To demonstrate the potential of our tSLR reformulation, Section \ref{sec:Algorithmic framework for numerical computations} provides some basic numerical illustrations. Our experiments on a selection of problems show that overall, our model can lead to better numerical performance, in comparison to the global approaches from the existing literature, which are presented in the next section. 

\section{Existing reformulations and methods} \label{sec:Existing reformulations and numerical algorithms}
We start here by recalling that based on the lower-level optimal value function from \eqref{eq:phi_L}, the lower-level solution set-valued mapping  can be written as 
\begin{align*}
    S_L(x)=\{z\in Y(x)\mid f(x,z)\leq\varphi_L(x)\}
\end{align*}
for each $x\in X$. With
the function $G_L :\mathbb{R}^n\times \mathbb{R}^m \rightarrow\mathbb{R}^{p+1}$ defined by
\[
G_L(x, y):=\left(\begin{array}{c}
         g(x, y)\\[1ex]
         f(x, y)-\varphi_L(x)
    \end{array}\right),
\]
we have 
\begin{align}\label{eq:Zx}
    S_L(x)=Z(x):=\{y\in\R^m\mid G_L(x,y)\leq0, \; h(x, y)=0\},
\end{align}
and may therefore write the  intermediate-level maximal value function from \eqref{eq:def_phi_p} as
\[
\varphi_p(x)=\max_{y\in Z(x)}F(x, y).
\]

Considering the pessimistic bilevel program \eqref{eq:Pessimistic-Problem}, 
we introduce the following assumption; see next section for the definition of the Guignard constraint qualification (GCQ).
\begin{assumption}\label{Assumption1} It holds that:
    \begin{itemize}
        \item[$(1)$] $\forall x\in X$, $F(x, \cdot)$ is a concave function.
        \item[$(2)$] $\forall x\in X$, $f(x, \cdot)$ is a convex function. %
        \item[$(3)$] $\forall x\in X$, $g_i(x, \cdot)$ is a convex function for $i=1, \ldots, p$.
         \item[$(4)$] $\forall x\in X$, $h(x, \cdot)$ is affine linear.
        \item[$(5)$] %
         $\forall x\in X$, \eqref{eq:GCQ} holds in $Z(x)$ at each $y\in S_p(x)$.
   \end{itemize}
\end{assumption}
We suppose throughout this section that Assumption \ref{Assumption1} holds and all the functions $F$, $f$, $g$, and $h$ are at least once continuously differentiable.

The most natural interpretation of problem \eqref{eq:Pessimistic-Problem} is to view it as a minmax problem, but with a very complex coupled and implicitly defined inner feasible set %
$S_L(x)$. Standard minmax programming algorithms already struggle with simple linear constraints (see survey in \cite{cipollasingle}). %
 \cite{zeng2020practical}  addresses the implicit nature of the coupled constraint in \eqref{eq:Pessimistic-Problem} with the new minmax model 
\begin{equation}\label{eq:minmax_Bo_Zeng}\tag{MM}
    \underset{x\in X,\, z\in Y(x)}{\min}~\underset{y\in \mathcal{S}_\mathcal{L}(x, z)}{\max}~F(x, y),
\end{equation}
where the new set-valued mapping $\mathcal{S}_\mathcal{L}$ that replaces $S_L$ is given by
\[
        \mathcal{S}_\mathcal{L}(x, z):=\left\{y\in Y(x)\left|~f(x,y)\leq f(x,z) \right.\right\}.
\]
Let $A$ denote the set of global optimal solutions of problem \eqref{eq:Pessimistic-Problem} in the sense of Definition \ref{def:local_optimal_sol} and similarly, let $B$ collect all the global optimal solutions of problem \eqref{eq:minmax_Bo_Zeng}. Then it can be shown (see \cite{zeng2020practical} or \cite[Theorem 2.5]{BeckEtAl2027}) that 
$
A=\text{proj}_x B,
$
where \textit{proj} stands for the parallel projection mapping. 

Observe that problem \eqref{eq:minmax_Bo_Zeng} is globally equivalent to the constrained optimization problem
\begin{equation}\label{eq:Standard_minmax}
     \underset{x, y, z}{\min}~F(x, y) \;\mbox{ s.t. }\; x\in X,\, z\in Y(x), \;\; y\in K(x, z):=\underset{\quad y\in \mathcal{S}_\mathcal{L}(x, z)}{\arg\max}~F(x, y),
\end{equation}
If Assumption \ref{Assumption1} holds, then problem \eqref{eq:Standard_minmax} is globally equivalent to the problem
\begin{equation}\label{eq:minmax_Bo_Zeng_SLR}\tag{MM-CC}
\begin{array}{rl}
   \underset{x, y, z, u, v, w}{\min} & F(x, y) \\[1ex]
     \mbox{s.t.} & x\in X,\; z\in Y(x),\\[1ex]
                &\nabla_y F(x, y) - \nabla_y g(x, y)^\top u - \nabla_y h(x, y)^\top v- w \nabla_y f(x, y) =0,\\[1ex]
                & w\geq 0,\;\, f(x, y)-f(x,z)\leq 0,\;\,w(f(x,y)-f(x,z))=0,\\[1ex]
                & u\geq 0, \;\, g(x, y)\leq 0, \;\, u^\top g(x, y)=0,\\[1ex]
                &h(x, y)=0,
\end{array}
\end{equation}
which is a special class of the mathematical program with equilibrium constraints (MPEC). It is important to note that due to the presence of the constraint $f(x,y)\leq f(x, z)$, which is a proxy of the classical lower-level value function constraint $f(x, y)\leq \varphi_L(x)$, stronger constraint qualifications such as the MFCQ will fail for problem \eqref{eq:Standard_minmax}. Hence, the much weaker Assumption \ref{Assumption1}(5) is more appropriate here. For some linear cases of problem \eqref{eq:Pessimistic-Problem}, transformations of the form \eqref{eq:minmax_Bo_Zeng_SLR} are used for the development of numerical methods in the articles \cite{liu2018new,zeng2020practical}. %

Obviously, the SLR \eqref{eq:minmax_Bo_Zeng_SLR}, as well as the standard pessimistic and closely related ones introduced in the next subsection, bring the pessimistic bilevel program in the realm of MPECs, and therefore techniques commonly used for the classical KKT reformulation of the standard optimistic bilevel optimization problem might be possible paths for numerical methods for the problem. However, there are some key differences between reformulation \eqref{eq:minmax_Bo_Zeng_SLR} and the KKT reformulation for optimistic bilevel optimization. First, the feasible set of the former involves the leader's objective function, while the follower's objective function is part of the complementarity constraints. This potentially makes problem \eqref{eq:minmax_Bo_Zeng_SLR} much more difficult to solve. Moreover, no rigorous analysis of the relationship between this problem and the original problem \eqref{eq:Pessimistic-Problem} has been conducted yet, especially w.r.t. local optimal solutions. Additionally, it is unclear whether the MPEC theory w.r.t. constraint qualifications, as well as necessary and sufficient optimality conditions (in terms of S-, M-, and C-type constraint qualifications and stationarity concepts) can work as it is the case for the KKT reformulation for the optimistic bilevel optimization problem; see, e.g., \cite{dempe2012karush,jane2005necessary,guo2013second}.

Another notable issue is that problem \eqref{eq:minmax_Bo_Zeng_SLR} involves two more \textit{implicit variable categories} (i.e., variables present in the feasible set but not in the objective function) than the KKT reformulation of the standard optimistic bilevel program; namely, the second presence of the lower-level variable via $z$ and the Lagrange multiplier $w$ associated to the constraint $f(x,y)\leq f(x, z)$. As it has been extensively studied in the literature (see, e.g., \cite{dempe2025duality} for analysis related to SLRs for optimistic bilevel programs), implicit variables are one of the main causes of challenges involved in numerically solving the KKT reformulation of the standard optimistic bilevel optimization problem. As it can be sensed from the introductory example in \eqref{eq:QP_Reform}, the true single-level reformulation introduced in this paper does not involve any implicit variable or complementarity constraint. 

\subsection{Standard pessimistic-type reformulations}
The following \textit{standard pessimistic} version of problem \eqref{eq:Pessimistic-Problem} was formally introduced in \cite{lampariello2019standard}:
\begin{equation}\label{eq:StandPess}\tag{SP}
    \underset{x, y}{\min}~F(x, y) \;\mbox{ s.t. }\; x\in X, \;\; y\in S_p(x).
\end{equation}
\eqref{eq:Pessimistic-Problem} is globally equivalent to \eqref{eq:StandPess} in the sense of \cite[Proposition~4.1]{lampariello2019standard}. In \cite{lampariello2019standard}, the ``bridge construction'' for simple bilevel programs from \cite{lampariello2017bridge} is applied to reformulate the ``vertical'' bilevel connection between the intermediate \eqref{eq:IL} and the lower-level \eqref{eq:LL} as the ``horizontal'' connection of a generalized Nash equilibrium problem. This reformulates \eqref{eq:StandPess} into the multi-follower game 
\begin{equation}\label{eq:MFG}\tag{MFG}
    \underset{x, y, z}{\min}~F(x, y) \;\mbox{ s.t. }\; x\in X, \;\; (y,z)\in E(x),
\end{equation}
where, for $x\in X$, $E(x)$ denotes the set of generalized Nash equilibria of the two player problem that can be written as
\begin{equation}\label{eq:GNEP}\tag{GNEP}
 \begin{array}{rl}
   \underset{y}{\min} & -F(x,y)\\[1ex]
     \mbox{s.t.} & y\in Y(x),\ f(x,y)\leq f(x,z)
\end{array} \qquad \qquad 
 \begin{array}{rl}
   \underset{z}{\min} & f(x,z)\\[1ex]
     \mbox{s.t.} & z\in Y(x).
\end{array}
\end{equation}

The right-hand side problem in \eqref{eq:GNEP} is convex under Assumption~\ref{Assumption1}, and since $z$ acts as a parameter, also the left-hand side problem is convex. Since in equilibrium points the left-hand side player's constraint $f(x,y)\leq f(x,z)$ becomes $f(x,y)\leq\varphi_L(x)$, the \eqref{eq:GCQ} is satisfied at all optimal points of the left-hand side problem under Assumption~\ref{Assumption1}(5). Under the mild additional assumption of \eqref{eq:GCQ} at each optimal point of the right-hand side problem, in the equilibrium points the respective KKT conditions are necessary and sufficient for optimality in the two player problems (note that, as opposed to LICQ and MFCQ, for nonpolyhedral convex sets neither the ACQ nor the GCQ are necessarily preserved under dropping of constraints). 

Replacing optimality in the two player problems by their respective KKT systems results in the true single-level mathematical program with complementarity constraints
\begin{equation}\label{eq:tSLRCC}\tag{SP-CC}
\begin{array}{rl}
   \underset{x, y, z, u, v, w,\lambda,\mu}{\min} & F(x, y) \\[1ex]
     \mbox{s.t.} & x\in X,\\[1ex]
                &\nabla_y F(x, y) - \nabla_y g(x, y)^\top u - \nabla_y h(x, y)^\top v- w \nabla_y f(x, y) =0,\\[1ex]
                & w\geq 0,\;\, f(x, y)-f(x,z)\leq 0,\;\,w(f(x,y)-f(x,z))=0,\\[1ex]
                & u\geq 0, \;\, g(x, y)\leq 0, \;\, u^\top g(x, y)=0,\\[1ex]
                &h(x, y)=0,\\[1ex]
                &\nabla_z f(x, z) + \nabla_z g(x, z)^\top\lambda + \nabla_z h(x, z)^\top\mu =0,\\[1ex]
                & \lambda\geq 0, \;\, g(x, z)\leq 0, \;\, \lambda^\top g(x, z)=0,\\[1ex]
                &h(x, z)=0.
\end{array}
\end{equation}
The global equivalence of \eqref{eq:tSLRCC} to \eqref{eq:MFG} in the sense of \cite[Proposition.~5.4]{lampariello2019standard}
yields also the global equivalence of \eqref{eq:tSLRCC} to \eqref{eq:Pessimistic-Problem}. %
The main difference between this SLR and \eqref{eq:minmax_Bo_Zeng_SLR} is the presence of the lower-level KKT conditions in the feasible set of \eqref{eq:tSLRCC}; therefore leading to an increase in the number of implicit variables with the additional presence of lower-level Lagrange multipliers. 
Clearly, the number of variables and constraints in  \eqref{eq:tSLRCC} is significantly larger, and an additional algorithmically challenging (lower-level) complementarity system appears. We also remark that, due to the latter issue, in \cite{lampariello2019standard},  \eqref{eq:tSLRCC} is also solved in a mixed-integer reformulation using additional binary variables to address the complementarity constraints. 
Although usually from completely different angles, reformulations of  \eqref{eq:Pessimistic-Problem} of a flavor similar to \eqref{eq:tSLRCC} have been derived in other papers \cite{dempe2018pessimistic,kis2021optimistic,calvete2025novel} and used to solve the problem, usually from the perspective of established techniques in optimistic bilevel optimization such as the the mixed-integer reformulation mentioned above. 

Recall that in problem \eqref{eq:tSLRCC}, the second complementarity system in the feasible set, counting from the top, is a kind of proxy expression representing the value function constraint if the lower-level value function reformulation is applied to the intermediate-level problem \eqref{eq:IL}. Instead of the \eqref{eq:MFG} reformulation, problem \eqref{eq:StandPess} is also globally equivalent to 
\begin{equation}\label{eq:GE}
\underset{x, y}{\min}~F(x, y) \;\mbox{ s.t. }\; x\in X, \;\; \nabla_y F(x, y) \in N_{S_L(x)} (y),
\end{equation}
under Assumption~\ref{Assumption1}(1)--(4). Here, $N_{S_L(x)} (y)$ represents the normal cone to $S_L(x)$ at $y$, in the sense of convex analysis. If additionally,  Assumption~\ref{Assumption1}(5) holds in $S_L(x)$ at $y$ for every $x\in X$, then this problem is globally equivalent, in a suitable sense, to the problem 
\begin{equation}\label{eq:StandPess-KKT}\tag{SP-LF-CC}
\begin{array}{rl}
   \underset{x, y, u, v, w}{\min} & F(x, y) \\[1ex]
     \mbox{s.t.} & x\in X, \;\, h(x, y)=0,\\[1ex]
                 & w\geq 0,\;\, f(x, y)-\varphi_L(x)\leq 0,\\[1ex]
                 & u\geq 0, \;\, g(x, y)\leq 0, \;\, u^\top g(x, y)=0,\\[1ex]
                 &\nabla_y F(x, y) - \nabla_y g(x, y)^\top u - \nabla_y h(x, y)^\top v- w \nabla_y f(x, y) =0.
\end{array}
\end{equation}
This reformulation is used in \cite{guan2025adaprox} to develop a proximal point-type algorithm for the special case of problem \eqref{eq:Pessimistic-Problem}, where the lower-level problem is unconstrained. It might also be useful to mention that the feasible set of  \eqref{eq:StandPess-KKT} is reminiscent of the so-called \textit{combined MPEC and the value function approach} introduced for the standard optimistic bilevel program introduced in \cite{ye2010new}. 

Instead of using the generalized equation reformulation of the intermediate-level problem in \eqref{eq:GE}, the two-level value function \eqref{eq:def_phi_p} could also be used, and this would lead to a double-value function reformulation for problem \eqref{eq:StandPess}, rather than the model in \eqref{eq:StandPess-KKT}. Such a transformation is the basis of the heuristic-type method developed in \cite{antoniou2024delta}. 

\subsection{Semi-infinite programming-based reformulation}
It is well-known (see, e.g., \cite{zemkoho2014simple} and references therein) that  problem \eqref{eq:Pessimistic-Problem} is globally equivalent to the generalized semi-infinite programming problem 
\begin{equation}\label{eq:GSIP}
\underset{x, t}{\min}~t \;\;\;\mbox{ s.t. }\;\;\; x\in X, \quad F(x, y) \leq t\;\;\; \forall y\in S_L(x).
\end{equation}
In the paper \cite{wiesemann2013pessimistic}, the standard semi-infinite programming approximation 
\begin{equation}\label{eq:SIP}\tag{SIP}
 \begin{array}{rl}
   \underset{x, z, t}{\min} & t \\[1ex]
     \mbox{s.t.} & x\in X, \;\, z\in Y, \lambda \,:Y \mapsto [0, 1],\\[1ex]
                 & \lambda(y)\left[f(x, z) - f(x, y) + \epsilon\right] + (1-\lambda (y)) \left[F(x, y) - t \right]\leq 0 \;\;\; \forall y\in Y
\end{array}   
\end{equation}
of problem \eqref{eq:GSIP}, where,  $\lambda \,:Y \mapsto [0, 1]$ defines a decision variable, is used to construct a discretization-type algorithm to compute approximate global optimal solutions for problem \eqref{eq:Pessimistic-Problem}.  Practical approaches to generate $\lambda$ can be found in the latter reference. 

Note that semi-infinite programming-based techniques have been used to developed algorithms to globally solve optimistic bilevel optimization problems in many papers; see, e.g., \cite{mitsos2008global,jungen2026libdips,kleniati2014branch}.

\subsection{Two-level value function-based reformulations}
{We assume in this subsection that $X\subset \mathbb{R}^n$}. Unlike in the context of the previous approaches, where methods are built based on transformations of problem \eqref{eq:Pessimistic-Problem} into constrained single-level reformulations of the problem, there is a stream of methods that rely preliminarily on directly minimizing the two-level value function $\varphi_p$ or constructing a modification of the function that serves as base for numerical algorithms. 
We start here by referring to the work in \cite{vcervinka2013computation}, where a derivative-free optimization (DFO) method based on off-the-shelf tools is applied to approximate the values of the two-level function $\varphi_p$ in order to estimate the derivatives of the function to design an iterative process to solve  \eqref{eq:Pessimistic-Problem}. 

In \cite{strekalovsky2025one}, the idea of Molodtsov \cite{MOLODTSOV197667,MOLODTSOV1973123} is used as preliminary step to design a method to approximate solutions for problem \eqref{eq:Pessimistic-Problem}. Note that the idea of Molodtsov can be viewed as approximating the maximization two-level value function $\varphi_p$ with the minimization one
\begin{equation}\label{eq:regularized-TLVF}
    \varphi^{\theta, \epsilon}_{o}(x):=\underset{y}{\min}\left\{F(x, y)\left|\quad y\in S^{\theta, \epsilon}_L(x)\right.\right\},
\end{equation}
where $\theta > 0$ is penalization parameter and $\varepsilon> 0$ is a relaxation parameter, such that $S^{\theta, \epsilon}_L$ represents a relaxation of the optimal solution set-valued mapping of the regularized lower-level problem obtained by replacing the function $f$ in \eqref{eq:LL} by $f-\theta F$; i.e., 
\[
S^{\theta, \epsilon}_L(x):=\left\{y\in Y(x)\left|\;\, f(x,y)-\theta F(x,y)\leq\varphi^{\theta}_L(x) + \epsilon\right.\right\},
\]
where  $\varphi^{\theta}_L$ is the corresponding optimal value function defined by
\[
\varphi^{\theta}_L(x):= \underset{y}{\min}\left\{f(x,y)-\theta F(x,y)\left|\quad y\in Y(x)\right.\right\}.
\]

\begin{figure}[htbp]
    \centering
    \begin{subfigure}[t]{0.49\textwidth}
        \centering
        \includegraphics[width=\textwidth]{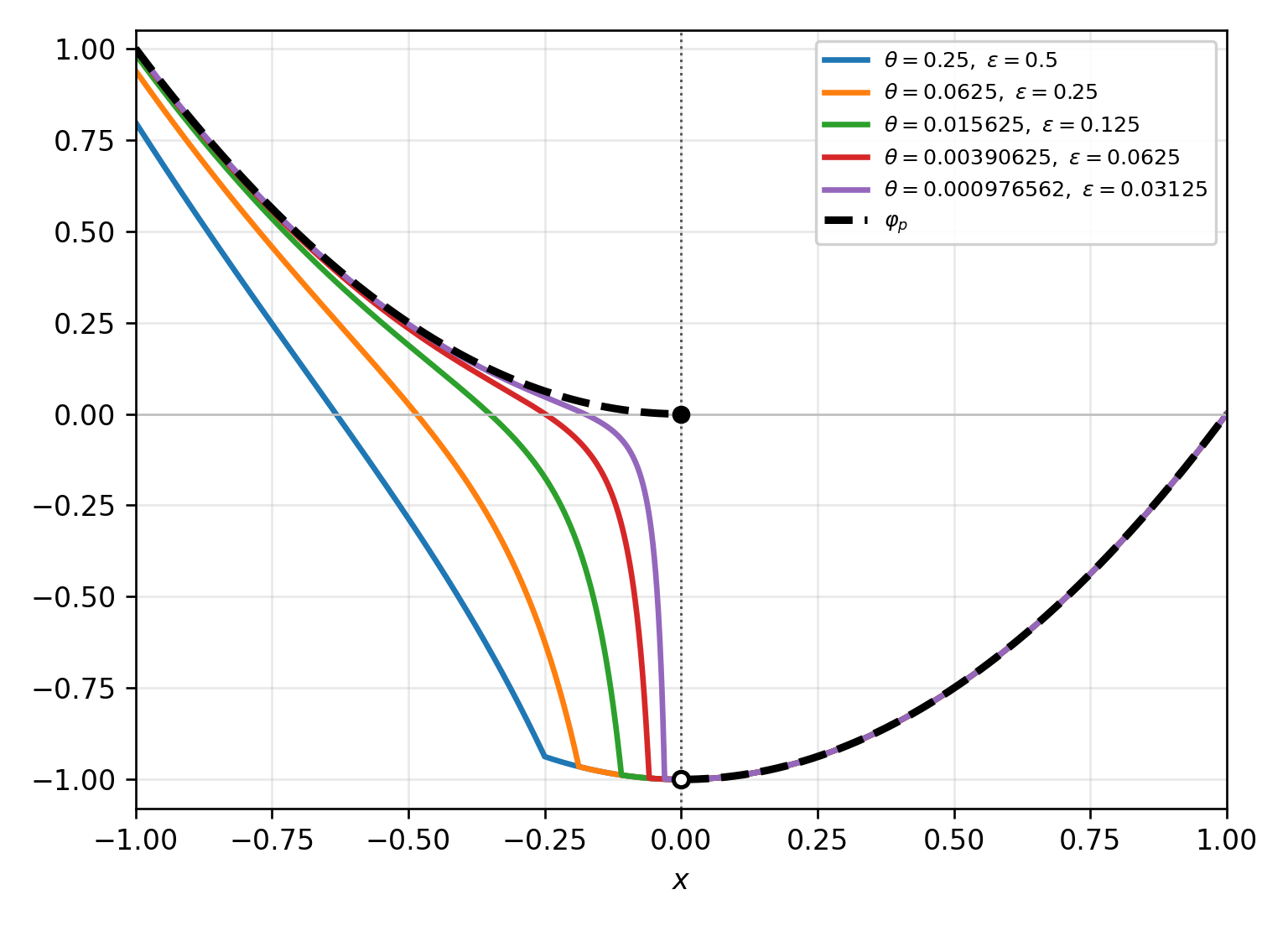}
        \caption{Molodtsov approximation $\varphi_o^{\theta,\epsilon}$ \eqref{eq:regularized-TLVF}}
        \label{fig:improved-molodtsov}
    \end{subfigure}
    \hfill
    \begin{subfigure}[t]{0.49\textwidth}
        \centering
        \includegraphics[width=\textwidth]{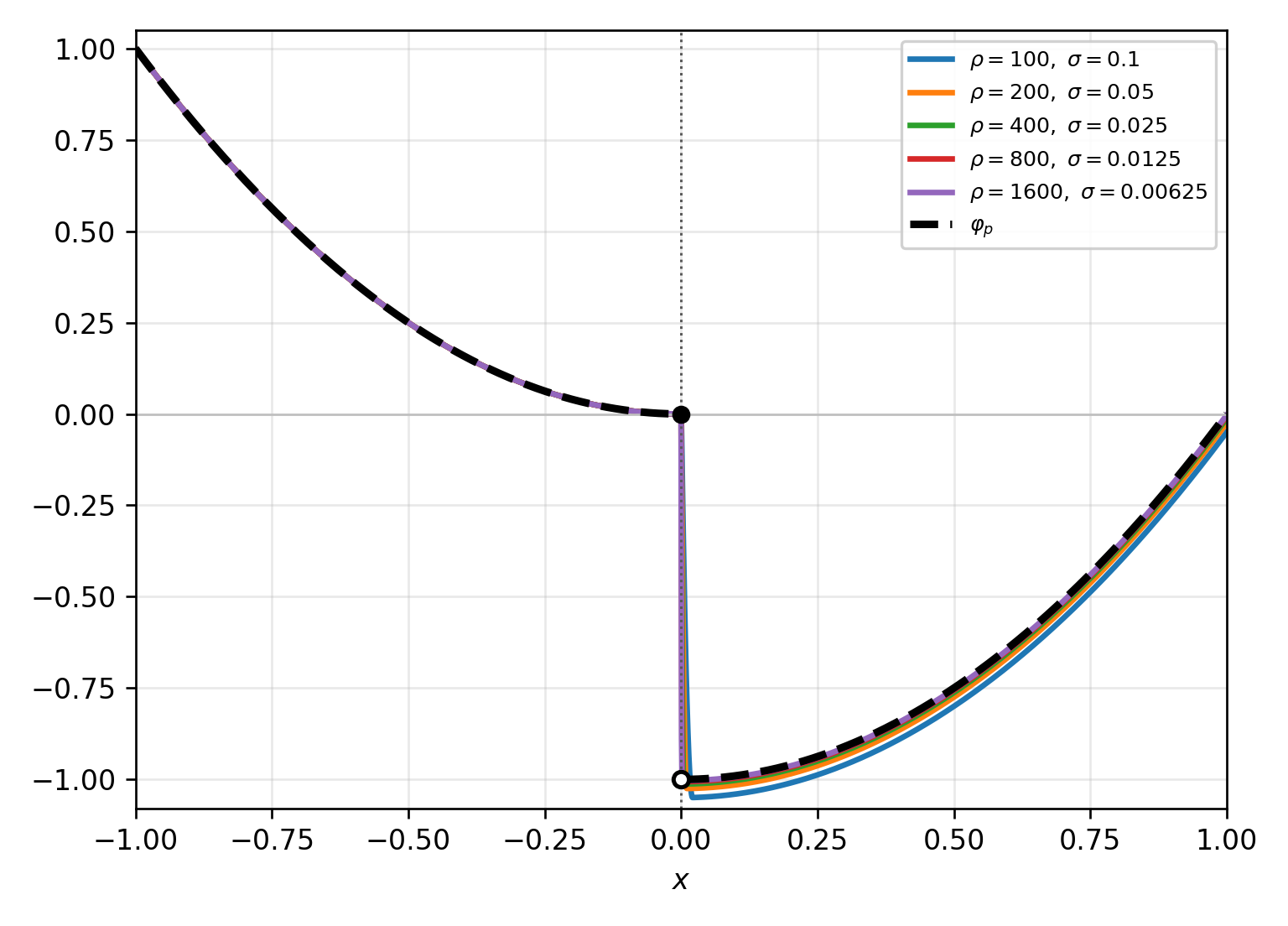}
        \caption{minmax approximation $\phi_{\rho,\sigma}$ \eqref{eq:minmax_approx_phi_p}}
        \label{fig:improved-czz}
    \end{subfigure}

    \vspace{0.5em}

    \begin{subfigure}[t]{0.49\textwidth}
        \centering
        \includegraphics[width=\textwidth]{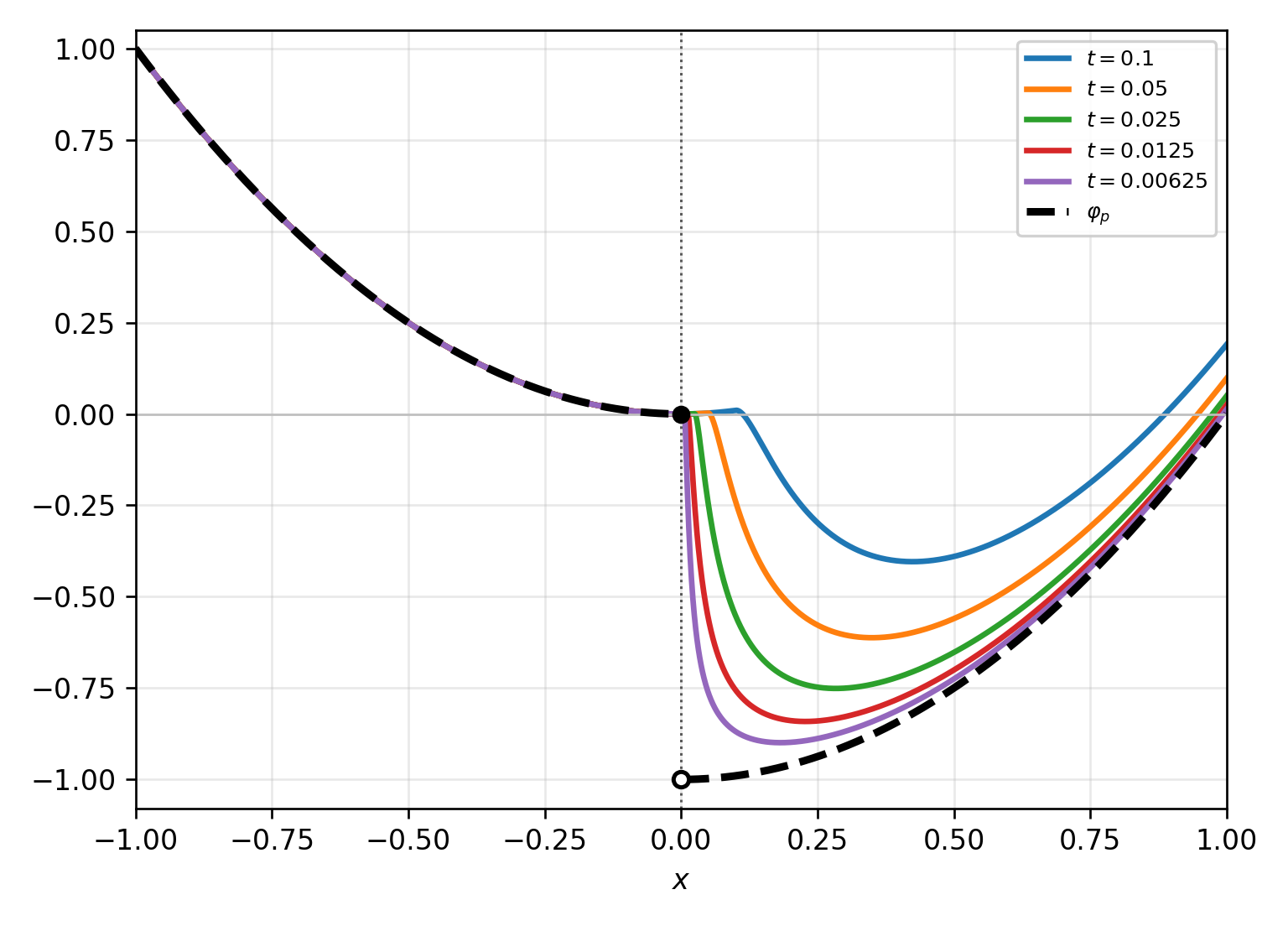}
        \caption{Scholtes relaxation $\psi_{\mathcal S}^{t}$ \eqref{eq:Relaxation_psi_p}}
        \label{fig:improved-scholtes}
    \end{subfigure}
    \hfill
    \begin{subfigure}[t]{0.49\textwidth}
        \centering
        \includegraphics[width=\textwidth]{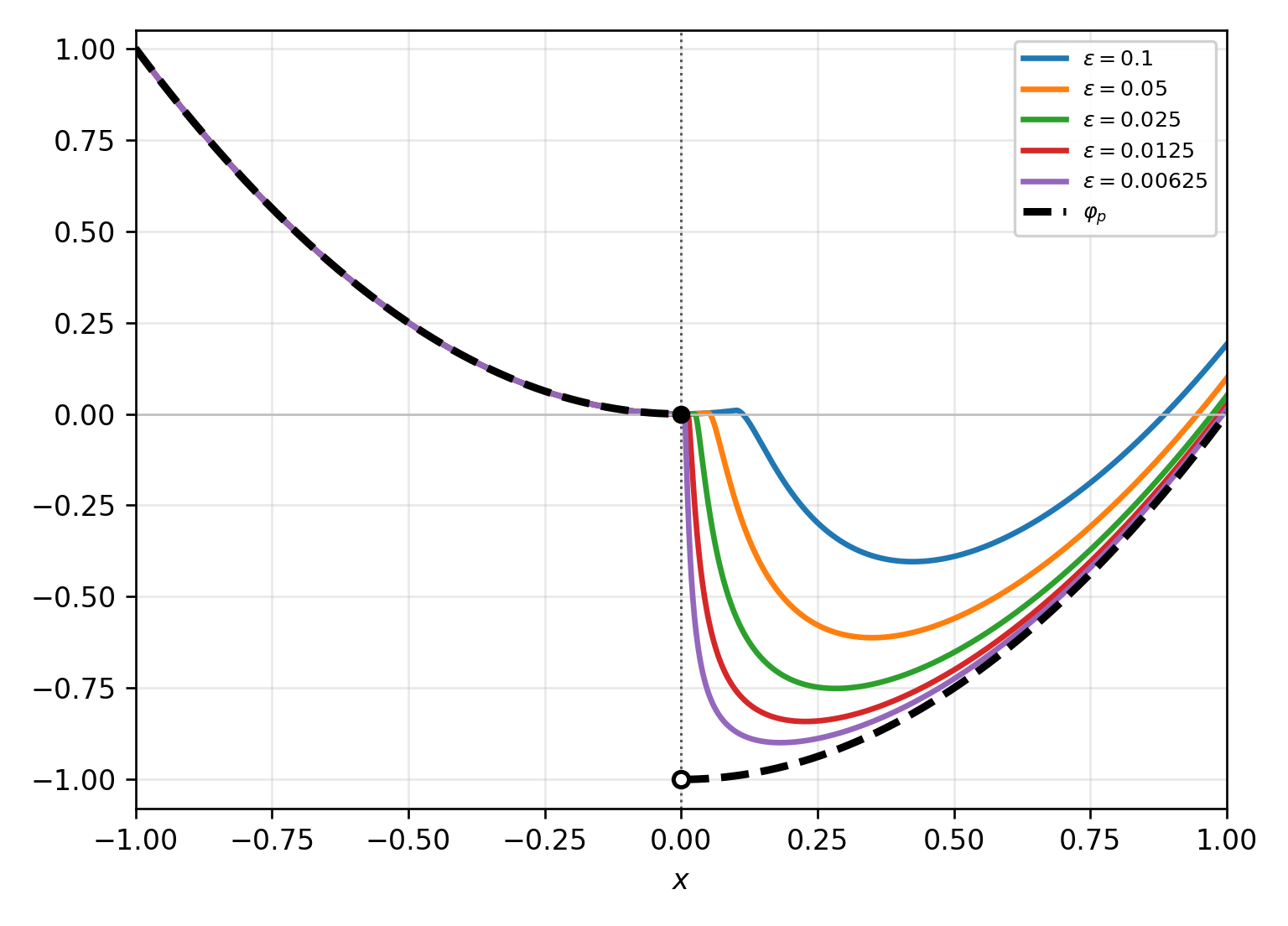}
        \caption{Lower-level value function relaxation $\varphi_p^{\varepsilon}$ \eqref{eq:LLVF-approx-phi_p}}
        \label{fig:improved-value-function}
    \end{subfigure}

    \caption{Approximations of the two-level value function $\varphi_p$ \eqref{eq:def_phi_p} for the pessimistic bilevel program \eqref{eq:Pessimistic-Problem} with $X=[-1,1]$, $Y=[0,1]$, $F(x,y)=x^2-y^2$, and $f(x,y)=-xy$.
    The reference value function is
    $\varphi_p(x)=x^2$ for $x\leq 0$ and $\varphi_p(x)=x^2-1$ for $x>0$,
    with a jump discontinuity at $x=0$.}
    \label{fig:improved-pessimistic-approximations}
\end{figure}

Let the optimal values of the pessimistic bilevel program \eqref{eq:Pessimistic-Problem} and the corresponding regularized problem based on \eqref{eq:regularized-TLVF} be denoted by 
\[
\bar{\varphi}_p := \underset{x\in X}{\min}~\varphi_p(x) \quad \mbox{ and } \quad \bar{\varphi}^{\theta,\varepsilon}_o:= \underset{x\in X}{\min}~\varphi^{\theta, \epsilon}_{o}(x),
\]
respectively. If we assume that the lower-level feasible set is fixed (i.e., $Y(x):=Y$ for all $x\in X$) with $X$ and 
$Y$ being compact metric spaces, while the functions $F$ and $f$ are even just continuous on $X\times Y$, then it is shown in \cite{MOLODTSOV197667} that 
\[
\bar{\varphi}_{p}
=
\lim_{n\to\infty}
\bar{\varphi}^{\theta_n,\varepsilon_n}_{o}
\quad
\mbox{ if }\quad 
\theta_n\to0^+,
\quad
\varepsilon_n\to0^+\;\mbox{ with } \; \frac{\theta_n}{\varepsilon_n} \to0^+.
\]
This result is established under weaker assumptions in \cite{loridan1996weak} and a similar framework for optimistic bilevel optimization is studied in the paper \cite{dempe1996algorithm}.

One fact that is clear from the Molodtsov framework is that the pessimistic bilevel optimization problem is approximated by an optimistic bilevel program; and in fact for $\theta=0$ and $\varepsilon=0$, the optimistic bilevel program is recovered. Hence, in \cite{strekalovsky2025one}, the core optimization problem solved is a standard optimistic-type transformation resulting from \eqref{eq:regularized-TLVF}. 

Under the assumption that $Y(x):=Y$ for all $x\in X$, the article \cite{cao2026single} suggests an approximation of the two-level value function $\varphi_p$ by the minmax value function 
\begin{equation}\label{eq:minmax_approx_phi_p}
    \phi_{\rho, \sigma}(x):=\underset{z\in Y}{\min}\,\underset{y\in Y}{\max}~F(x,y)-\rho \left(f(x, y)- f(x,z)\right) 
+ \frac{\sigma}{2}\|z\|^2 -\sigma y^\top z,
\end{equation}
(with the penalization and regularization parameters $\rho$ and $\sigma$ being positive) and a framework is constructed  to ensure that this function is differentiable and it holds that 
\[
\lim_{k \rightarrow \infty}\left[\underset{x\in X}{\inf}\,\phi_{\rho_k, \sigma_k}(x)\right] = \underset{x\in X}{\inf}~\varphi_p(x)
\]
if $X$ or $Y$ is a bounded set and the sequences $\left\{\rho_k\right\}$ and $\left\{\sigma_k\right\}$ are such that $\rho_k \rightarrow \infty$ and $\sigma_k \rightarrow 0$. 
A gradient descent-type iterative process based on the derivative of the function $\phi_{\rho, \sigma}$ (for $\rho>0$ and $\sigma>0$) is proposed and studied in \cite{cao2026single}.

Finally, in the series of papers \cite{benchouk2025scholtes,benchouk2026relaxation}, considering the KKT reformulation for the lower-level problem, under Assumption \ref{Assumption1}(2)--(4) and the fulfillment of the GCQ in $Y(x)$ at all $y\in S_L(x)$ and $x\in X$, the two-level value function $\varphi_p$ can be approximated by
\begin{equation}\label{eq:Relaxation_psi_p}
    \psi_{\mathcal{R}}^{t}(x):=\underset{(y,u)\in\mathcal{D}^{t}_{\mathcal{R}}(x)}{\max}F(x,y),
\end{equation}
where, for $t>0$ and $x\in X$, $\mathcal{D}^{t}_{\mathcal{R}}(x)$ can be any classical relaxation (labeled as $\mathcal{R}$) w.r.t. complementarity conditions of set of the KKT points of the lower-level problem \eqref{eq:LL}; i.e., 
\begin{equation}\label{Dt}
\mathcal{D}^{t}_{\mathcal{R}}(x):=\left\{(y,u)\in\mathbb{R}^{m+q}\left\vert\,
\nabla_y \ell(x,y,u)=0, \;\, \phi^t_{i, \mathcal{R}}(x, y, u)\leq 0, \;\; i=1, \ldots, q \right.\right\},
\end{equation}
where $\ell(x, y, u):= f(x,y) + u^\top g(x,y)$ denotes the lower-level Lagrangian function with the assumption that $Y(x):=\left\{y\in \mathbb{R}^m\left|\;\, g(x, y)\leq 0\right.\right\}$. In particular, for illustration, we assume here that $\mathcal{R}$ corresponds to the \textit{Scholtes} relaxation (denoted by $\mathcal{R}:=\mathcal{S}$); then for all $t>0$ and $i=1, \ldots, q$, the function $\phi^t_{i, \mathcal{S}}$ is defined for a triplet $(x, y, u)$ by
\[
\phi^t_{i, \mathcal{S}}(x, y, u):= \left(\begin{array}{cc}
            g_i(x,y)  \\
            -u_i\\
            -u_i g_i(x,y)-t
       \end{array} \right).
\]

Such a relaxation creates advantages; for instance, \eqref{eq:Relaxation_psi_p} provides an upper bound for $\varphi_p$ an  if the function  $t\mapsto\psi_{\mathcal{R}}^{t}(x)$ is  upper semicontinuous at $0^+$ for any $x\in \mathbb{R}^n$ and
	 $(t_{k})\downarrow0$, then for any $x\in\mathbb{R}^{n}$, we have
	$\psi_{\mathcal{R}}^{t_{k}}(x)\rightarrow\psi_{p}(x)$ as $k\rightarrow\infty$. Furthermore, let $(t_{k})\downarrow 0$ and $(x^{k})$ be a sequence such that the point $x^{k}$ is a global optimal solution of problem 
\begin{equation}\label{KKT-RG}
    \underset{x\in X}{\min}~\psi_{\mathcal{R}}^{t}(x)
\end{equation}
for $t:=t_k$. If $x^{k}\rightarrow\bar{x}$ as $k\rightarrow\infty$, then $\bar{x}$ is a global optimal solution of  \eqref{eq:Pessimistic-Problem} provided that the function $t\mapsto\psi_{\mathcal{R}}^{t}(x)$ is upper semicontinuous at $0^+$ for any $x\in \mathbb{R}^n$, and the function $x \mapsto \psi_{p}(x)$ is lower semicontinuous at $\bar{x}$.  
Considering this, a method to approximate stationary points of \eqref{eq:Pessimistic-Problem} via the computation of those of problem \eqref{KKT-RG} is introduced and studied in the articles \cite{benchouk2025scholtes,benchouk2026relaxation}.

To conclude this section, it is important to note that a common point between all the approximations of  $\varphi_p$ introduced here is that they lead to functions with much better behavior, as illustrated by the example in Fig. \ref{fig:improved-pessimistic-approximations}. In particular, as in this example, $\varphi_p$ is typically only upper semicontinuous. This highlights a possible drawback of the two-level value function-based numerical methods, as it is very likely that computed points are not optimal for \eqref{eq:Pessimistic-Problem} or at best would be optimistic optimal solutions (as it is very likely to be the case for the example in Fig. \ref{fig:improved-pessimistic-approximations}). 

Note that in Fig. \ref{fig:improved-pessimistic-approximations}, we also included the value function-based approximation of $\varphi_p$ defined by
\begin{equation}\label{eq:LLVF-approx-phi_p}
  \varphi^{\varepsilon}_p(x):= \underset{y}{\max}~\left\{F(x,y)\left|~y\in Y(x), \;\; f(x,y)-\varphi_L(x)\leq \varepsilon \right.\right\},  
\end{equation}
where the function $\varphi_L$ is defined in \eqref{eq:phi_L} and with the relaxation parameter $\varepsilon >0$, which is commonly used in the literature to tackle the bilevel optimization problem. 

\section{Preliminary mathematical background}\label{sec:Preliminaries}
In this section, we introduce some basic results and concepts that will be used throughout the paper.

\subsection{Constraint qualifications, optimality conditions, and duality}\label{sec:CQs} The material presented in this subsection can be found in any standard book on continuous optimization; see, e.g., \cite{fletcher2000practical,wright1999numerical}. The focus in this subsection will be the constrained optimization problem 
\begin{equation}\label{eq:min_standard}%
    \begin{array}{rl}
    \min &\mathfrak{f}(x)\\[1ex]
    \mbox{s.t.} & x\in \mathcal{C}:=\left\{\left. x\in \mathbb{R}^n\right|~\mathfrak{g}(x)\leq 0, \;\; \mathfrak{h}(x)=0\right\}
    \end{array}
\end{equation}
with continuously differentiable functions $\mathfrak{f} : \mathbb{R}^n \rightarrow \mathbb{R}$, $\mathfrak{g} : \mathbb{R}^n \rightarrow \mathbb{R}^p$, and $\mathfrak{h} : \mathbb{R}^n \rightarrow \mathbb{R}^q$. 
Let $\mathcal{I}(\bar x):=\left\{i:=1, \ldots, p|\; \mathfrak{g}_i(\bar x)=0 \right\}$. For ease of notation, in the sequel, we use $\mathcal{I}$ instead of $\mathcal{I}(\bar x)$. 
Let $\nabla g_\mathcal{I}(\bar x)$ be the submatrix of the Jacobian matrix $\nabla g(\bar x)$ made only of the rows with index $i\in \mathcal{I}$. The  linear independence constraint qualification (LICQ) will be said to hold at $\bar x\in \mathcal{C}$ if 
\begin{equation}\label{eq:LICQ}\tag{LICQ}
        \nabla\mathfrak{g}_\mathcal{I}(\bar x)^\top \alpha + \nabla\mathfrak{h}(\bar x)^\top \beta =0
\quad \Longrightarrow \quad \left[\alpha =0,\;\;\beta =0\right].
\end{equation}
The  Mangasarian-Fromovitz constraint qualification (MFCQ) will be said to hold at a point $\bar x\in \mathcal{C}$ if the following condition is satisfied:
\begin{equation}\label{eq:MFCQ}\tag{MFCQ}
    \left.\begin{array}{r}
        \nabla\mathfrak{g}(\bar x)^\top \alpha + \nabla\mathfrak{h}(\bar x)^\top \beta =0\\[1ex]
         \alpha\geq 0, \;\; \alpha^\top\mathfrak{g}(\bar x)=0
    \end{array}\right\} \Longrightarrow \quad \left[\alpha =0,\;\;\beta =0\right].
\end{equation}

Another constraint qualification that will be useful in the sequel is the Abadie constraint qualification (ACQ), which is weaker than the MFCQ. To introduce it, we first recall the tangent cone to $\mathcal{C}$ at one of its points $\bar x$: 
\[
T_{\mathcal{C}}(\bar x):=\left\{d\in \mathbb{R}^n\left|~\begin{array}{ll}
    \exists \{x^k\}_{k\in \mathbb{N}}\subset \mathcal{C}, \;\; \exists \{t_k\}_{k\in \mathbb{N}}\subset (0, \; \infty): \\[2ex]
     x^k \rightarrow \bar x, \;\; t_k\downarrow 0, \;\; \left(x^k -\bar x\right)/t_k \rightarrow d
\end{array} \right.\right\}.
\]
The ACQ will be said to hold at a point $\bar x \in \mathcal{C}$ if this tangent cone coincides with the linearized tangent cone to $\mathcal{C}$ at the same point $\bar x$; i.e., 
\begin{equation}\label{eq:ACQ}\tag{ACQ}
    T_{\mathcal{C}}(\bar x)=\left\{d\in \mathbb{R}^n\left|~\begin{array}{ll}
    \nabla\mathfrak{g}_i(\bar x)^\top d\leq 0\;\; \forall i:\;\, \mathfrak{g}_i(\bar x)=0\\[2ex]
     \nabla\mathfrak{h}_j(\bar x)^\top d = 0\;\; \forall j=1, \ldots, q
\end{array} \right.\right\} =: L_{\mathcal{C}}(\bar x).
\end{equation}
And finally, we introduce a constraint qualification weaker than the ACQ. To proceed, note that for a given cone $\mathcal{K}\subset \mathbb{R}^n$, its dual is the cone 
\begin{equation}\label{eq:dual_cone}
\mathcal{K}^*:=\left\{\left.v\in \mathbb{R}^n\right|\; v^\top d\geq 0 \mbox{ for all } d\in \mathcal{K}\right\}.
\end{equation}
The Guignard constraint qualification (GCQ) will be said to hold at $\bar x \in \mathcal{C}$ if the equality is preserved if the dual is applied on both sides of \eqref{eq:ACQ}; i.e., 
\begin{equation}\label{eq:GCQ}\tag{GCQ}
    \left(T_{\mathcal{C}}(\bar x)\right)^* =  \left(L_{\mathcal{C}}(\bar x)\right)^*.
\end{equation}
It is well-known that the GCQ is strictly weaker than the ACQ. Overall, in summary, at a given point $\bar x \in \mathcal{C}$, we have the following chain of implications:
\[
\eqref{eq:LICQ} \;\; \Longrightarrow \;\; \eqref{eq:MFCQ} \;\; \Longrightarrow \;\; \eqref{eq:ACQ} \;\; \Longrightarrow \;\; \eqref{eq:GCQ}.
\]

If $\bar x$ is a local optimal solution of problem \eqref{eq:min_standard} and the GCQ holds at $\bar x$, then we can find Lagrange multipliers $\alpha\in \mathbb{R}^p$ and $\beta\in \mathbb{R}^q$ such 
\begin{eqnarray}
  \nabla\mathfrak{f}(\bar x) +   \nabla\mathfrak{g}(\bar x)^\top \alpha + \nabla\mathfrak{h}(\bar x)^\top \beta =0, \label{OptCond1}\\[1ex]
         \alpha\geq 0, \;\; \mathfrak{g}(\bar x) \leq 0, \;\; \alpha^\top\mathfrak{g}(\bar x)=0,\label{OptCond2}\\[1ex]
         \mathfrak{h}(x)=0.\label{OptCond3}
\end{eqnarray}

To state a second order sufficient optimality condition for problem \eqref{eq:min_standard}, we introduce the Lagrangian function associated to the problem
\[
\ell(x, \alpha, \beta):= \mathfrak{f}(x) + \alpha^\top \mathfrak{g}(x) + \beta^\top \mathfrak{h}(x).
\] 
Now, let $x$ be such that we can find Lagrange multipliers $\alpha\in \mathbb{R}^p$ and $\beta\in \mathbb{R}^q$ such that the optimality conditions \eqref{OptCond1}--\eqref{OptCond3} are satisfied. If we assume that the second order sufficient condition (SOSC)
\begin{equation}\label{eq:SOSC}\tag{SOSC}
d^\top \nabla^2_{xx} \ell\left(x, \alpha, \beta\right)d > 0 \;\; \mbox{ for all }\;\; d\in \mathfrak{C}(x, \alpha)\setminus \{0\}
\end{equation}
holds, then $x$ is a strict local optimal solution of problem \eqref{eq:min_standard}; see, e.g., \cite{shetty1993nonlinear} for more details on \eqref{eq:SOSC}. Note that here, $\mathfrak{C}(x, \alpha)$ denotes the 
critical cone associated to  \eqref{eq:min_standard}, and which is defined by
\begin{equation}\label{eq:Critial_Cone}
   \mathfrak{C}(x, \alpha) :=\left\{d\in \mathbb{R}^n\left|~\begin{array}{ll}
    \nabla\mathfrak{g}_i(x)^\top d = 0\;\; \forall i:\;\, \mathfrak{g}_i(x)=0, \;\; \alpha_i >0\\[2ex]
    \nabla\mathfrak{g}_i(x)^\top d\leq 0\;\; \forall i:\;\, \mathfrak{g}_i(x)=0, \;\; \alpha_i =0\\[2ex]
     \nabla\mathfrak{h}_j(x)^\top d = 0\;\; \forall j=1, \ldots, q
\end{array} \right.\right\}.  
\end{equation}

To close this section, consider the following Wolfe dual of problem \eqref{eq:min_standard}:
\begin{equation}\label{eq:min_standard_dual}%
    \begin{array}{rl}
    \underset{x,\, \alpha,\, \beta}\max & \ell(x, \alpha, \beta)\\[2ex]
    \mbox{s.t.} &  \nabla_x \ell(x, \alpha, \beta)=0, \;\; \alpha\geq 0.
    \end{array}
\end{equation}
Then, we can state the following strong Wolfe duality result from \cite{wolfe1961duality,fletcher2000practical}:
\begin{lemma}\label{eq:Wolfe_Dual_P} For problem \eqref{eq:min_standard}, let the functions $\mathfrak{f}$ and $\mathfrak{g}_i$, for $i=1, \ldots, p$, be convex, while $\mathfrak{h}$ is an affine linear function. Furthermore, let the point $x$ be an optimal solution of problem \eqref{eq:min_standard} that satisfies \eqref{eq:GCQ}. Then, there exist Lagrange multipliers $\alpha\in \mathbb{R}^p$ and $\beta\in \mathbb{R}^q$ such that $\left(x, \alpha, \beta\right)$ is an optimal solution of problem \eqref{eq:min_standard_dual} and it holds that $\mathfrak{f}(x) = \ell\left(x, \alpha, \beta\right)$.
\end{lemma}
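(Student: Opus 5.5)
The plan has three steps: obtain KKT multipliers at $x$, show the triple is feasible for the Wolfe dual with the right objective value, and then use convexity to show no dual feasible point does better (weak duality).

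\textbf{Step 1 (multipliers and value).} Since $x$ is a (global, hence local) optimal solution of \eqref{eq:min_standard} and \eqref{eq:GCQ} holds at $x$, the first-order necessary conditions recalled above give $\alpha\in\mathbb{R}^p$ and $\beta\in\mathbb{R}^q$ satisfying \eqref{OptCond1}--\eqref{OptCond3}. Condition \eqref{OptCond1} says exactly $\nabla_x\ell(x,\alpha,\beta)=0$, and \eqref{OptCond2} gives $\alpha\ge 0$. Hence $(x,\alpha,\beta)$ is feasible for \eqref{eq:min_standard_dual}. Complementarity $\alpha^\top\mathfrak g(x)=0$ together with $\mathfrak h(x)=0$ yields
\[
\ell(x,\alpha,\beta)=\mathfrak f(x).
\]

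\textbf{Step 2 (weak duality).} Take any dual feasible $(\tilde x,\tilde\alpha,\tilde\beta)$, i.e., $\nabla_x\ell(\tilde x,\tilde\alpha,\tilde\beta)=0$ and $\tilde\alpha\ge0$. For fixed multipliers, $\ell(\cdot,\tilde\alpha,\tilde\beta)$ is convex: $\mathfrak f$ and the $\mathfrak g_i$ are convex with $\tilde\alpha\ge0$, and $\tilde\beta^\top\mathfrak h$ is affine for any sign of $\tilde\beta$. A convex differentiable function with vanishing gradient at $\tilde x$ attains its global minimum there. Therefore
\[
\ell(\tilde x,\tilde\alpha,\tilde\beta)\le \ell(x,\tilde\alpha,\tilde\beta)=\mathfrak f(x)+\tilde\alpha^\top\mathfrak g(x)+\tilde\beta^\top\mathfrak h(x)\le \mathfrak f(x),
\]
where the last inequality uses primal feasibility of $x$, namely $\mathfrak g(x)\le0$, $\tilde\alpha\ge0$, and $\mathfrak h(x)=0$.

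\textbf{Step 3 (conclusion).} Combining the two steps, every dual feasible value is at most $\mathfrak f(x)=\ell(x,\alpha,\beta)$, and this value is attained by the feasible point $(x,\alpha,\beta)$. So $(x,\alpha,\beta)$ solves \eqref{eq:min_standard_dual} with $\mathfrak f(x)=\ell(x,\alpha,\beta)$.

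The only delicate point is Step 1. Under GCQ alone, the existence of KKT multipliers relies on the standard fact that GCQ suffices for the KKT conditions at a local minimizer, which the paper recalls before the lemma. Convexity is not needed for multiplier existence; it enters only in the weak duality argument of Step 2.
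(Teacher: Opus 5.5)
Your proof is correct and follows the standard argument behind the result the paper cites from Wolfe and Fletcher; the paper itself gives no proof. In that argument the KKT multipliers at $x$ make $(x,\alpha,\beta)$ dual feasible with $\ell(x,\alpha,\beta)=\mathfrak{f}(x)$, and convexity of $\ell(\cdot,\tilde\alpha,\tilde\beta)$ for $\tilde\alpha\ge 0$ gives weak duality, with GCQ (as you note) needed only to guarantee that the multipliers exist.
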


\subsection{Parametric and minmin optimization}
The focus of this subsection will be on the \textit{minmin} optimization problem 
\begin{equation}\label{eq:MINMIN}\tag{\text{$\mathcal{P}_{2m}$}}
      \underset{x\in \mathcal{X}}{\min}~\underset{y\in \mathcal{Y}(x)}{\min}\mathfrak{f}(x, y),
\end{equation}
which involves an \textit{outer} (resp. \textit{inner}) minimization w.r.t. to the outer (resp. inner) variable $x\in \mathbb{R}^n$ (resp.  $y\in \mathbb{R}^m$).   $\mathfrak{f} : \mathbb{R}^n \times \mathbb{R}^m \rightarrow \mathbb{R}$ represents the objective function of \eqref{eq:MINMIN}, while $\mathcal{X} \subseteq \mathbb{R}^n$ corresponds to the outer feasible set and the set-valued mapping $\mathcal{Y} :\mathbb{R}^n \rightrightarrows \mathbb{R}^m$ describes the inner feasible set.

 Considering the inner problem in \eqref{eq:MINMIN}, which is obviously a parametric optimization problem (in the outer variable), two objects will play an important role in our analysis. That is, we need the optimal solution set-valued mapping $\mathcal{S}: \mathbb{R}^n \rightrightarrows \mathbb{R}^m$:
 \begin{equation}\label{eq:mathcal-S}
  \mathcal{S}(x):=\underset{\qquad y\in \mathcal{Y}(x)}{\arg\min}\mathfrak{f}(x, y)   
 \end{equation}
and the corresponding optimal value function defined by 
\[
\phi(x):=\underset{y\in \mathcal{Y}(x)}{\min}\mathfrak{f}(x, y).
\]
Based on this concept, note that problem \eqref{eq:MINMIN} can be equivalently written as 
\[
\underset{x\in \mathcal{X}}{\min}~\phi(x).
\]
Hence, throughout this section we will use the following concepts of solution:
\begin{definition}\label{def:SolutionConcept_Pp} A point $\bar x\in \mathcal{X}$ will be said to be a local optimal solution of problem \eqref{eq:MINMIN} if there exists a neighborhood $U$ of $\bar x$ such that 
\[
\phi(\bar x) \leq \phi(x) \;\; \mbox{ for all }\; x\in X\cap U. 
\]
As usual, if $U=\mathbb{R}^n$, then the point is a global optimal solution.
\end{definition}

Next, we introduce the single-min operator problem associated to problem \eqref{eq:MINMIN}:
\begin{equation}\label{eq:MIN}\tag{\text{$\mathcal{P}_{1m}$}}
    \begin{array}{rl}
    \underset{x, y}{\min}&\mathfrak{f}(x, y)\\[1ex]
    \mbox{s.t.} & x\in \mathcal{X}, \;\; y\in \mathcal{Y}(x).
    \end{array}
\end{equation}
Denote by $\Omega:=\left\{(x, y)\in \mathbb{R}^n\times \mathbb{R}^m\left|\;x\in \mathcal{X},\; y\in \mathcal{Y}(x)\right.\right\}$; $(\bar x, \bar y)$ will be said to be a local optimal solution of \eqref{eq:MIN} if there exists a neighborhood $W$ of $(\bar x, \bar y)$ such that 
\[
\mathfrak{f}(\bar x, \bar y) \leq \mathfrak{f}(x, y) \;\; \mbox{ for all }\; (x, y)\in \Omega \cap W.
\]
Similarly, if $W=\mathbb{R}^n\times \mathbb{R}^m$, then $(\bar x, \bar y)$ is a global optimal solution of problem \eqref{eq:MIN}. Since $\phi$ is the global minimal value function of the inner problem, we will only be interested in local optimal solutions $(\bar x,\bar y)$ of problem \eqref{eq:MIN} with $\bar y\in \mathcal{S}(\bar x)$ when we compare local optimal solutions of problem \eqref{eq:MIN} with those of problem \eqref{eq:MINMIN} (cf.  Lemma~\ref{thm:LocalRelationship}).

Next, we state the global relationship between problems \eqref{eq:MINMIN} and \eqref{eq:MIN}.
\begin{lemma}\label{minmin_min_rel}The following statements hold true:
\begin{itemize}
    \item[(a)] Let $\bar x$ be a global optimal solution of problem \eqref{eq:MINMIN}. Then, for all $\bar y\in \mathcal{S}(\bar x)$, the point $(\bar x, \bar y)$ is a global optimal solution of problem \eqref{eq:MIN}. 
    \item[(b)] Let $(\bar x, \bar y)$ be globally optimal for problem \eqref{eq:MIN}. %
    Then $\bar x$ is a global optimal solution of  \eqref{eq:MINMIN}. 
\end{itemize}
\end{lemma}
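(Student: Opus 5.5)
The argument works directly from the definitions, using only that $\phi(x)=\min_{y\in\mathcal{Y}(x)}\mathfrak{f}(x,y)$ and that, for the optimal solution sets to be meaningful, the minimum defining $\phi$ is attained at every point we consider. The one identity we need throughout is
\[
\mathfrak{f}(x,y)\;\geq\;\phi(x)\quad\text{for all }(x,y)\in\Omega,
\]
which holds by the definition of $\phi$ as an infimum over $\mathcal{Y}(x)$. In addition, $\mathfrak{f}(x,y)=\phi(x)$ exactly when $y\in\mathcal{S}(x)$.

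For (a), let $\bar x$ be globally optimal for \eqref{eq:MINMIN} and take $\bar y\in\mathcal{S}(\bar x)$. Then $(\bar x,\bar y)\in\Omega$ and $\mathfrak{f}(\bar x,\bar y)=\phi(\bar x)$. For any $(x,y)\in\Omega$ we have $x\in\mathcal{X}$, so the chain
\[
\mathfrak{f}(x,y)\geq\phi(x)\geq\phi(\bar x)=\mathfrak{f}(\bar x,\bar y)
\]
gives global optimality of $(\bar x,\bar y)$ for \eqref{eq:MIN}.

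For (b), let $(\bar x,\bar y)$ be globally optimal for \eqref{eq:MIN}. I first show $\bar y\in\mathcal{S}(\bar x)$. Since $\{\bar x\}\times\mathcal{Y}(\bar x)\subset\Omega$, we get $\mathfrak{f}(\bar x,\bar y)\le\mathfrak{f}(\bar x,y)$ for all $y\in\mathcal{Y}(\bar x)$, hence $\phi(\bar x)=\mathfrak{f}(\bar x,\bar y)$. Next take any $x\in\mathcal{X}$ with $\mathcal{Y}(x)\neq\emptyset$ and pick $y\in\mathcal{S}(x)$. Then $(x,y)\in\Omega$, so
\[
\phi(x)=\mathfrak{f}(x,y)\geq\mathfrak{f}(\bar x,\bar y)=\phi(\bar x).
\]
If $\mathcal{S}(x)$ happens to be empty while $\phi(x)$ is finite, the same conclusion follows by taking the infimum over $y\in\mathcal{Y}(x)$ of $\mathfrak{f}(x,y)\ge\phi(\bar x)$. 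If $\mathcal{Y}(x)=\emptyset$, then $\phi(x)=+\infty$ under the usual convention and the inequality is trivial. Hence $\bar x$ is globally optimal for \eqref{eq:MINMIN}.

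The only delicate point is this attainment and empty-set bookkeeping in (b). I handle it by working with infima, so that the argument does not depend on $\mathcal{S}(x)\ne\emptyset$. Everything else is routine.
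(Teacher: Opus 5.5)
Your proof is correct and follows the paper's own argument: in (a) you use the chain $\mathfrak{f}(\bar x,\bar y)=\phi(\bar x)\le\phi(x)\le\mathfrak{f}(x,y)$, and in (b) you first establish $\bar y\in\mathcal{S}(\bar x)$ (the paper does this by contradiction) and then pass $\phi(\bar x)=\mathfrak{f}(\bar x,\bar y)\le\mathfrak{f}(x,y)$ for all $(x,y)\in\Omega$ to $\phi(x)$. The only difference is bookkeeping: you read $\phi$ as an infimum with $\inf\emptyset=+\infty$, whereas the paper sets $\phi(x)=+\infty$ whenever $\mathcal{S}(x)=\emptyset$. Your convention is the one under which the step $\phi(x)\le\mathfrak{f}(x,y)$ in (a) stays valid when the inner minimum is not attained, and under it your case split in (b) is unnecessary, since the infimum argument covers every $x\in\mathcal{X}$ at once.
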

\begin{proof}
(a) For any $\bar y \in \mathcal{S}(\bar x)$ and any couple $(x, y)$ such that $x\in \mathcal{X}$ and $y\in \mathcal{Y}(x)$, 
\[
\mathfrak{f}(\bar x, \bar y)=\phi(\bar x) \leq \phi(x)\leq \mathfrak{f}(x, y). 
\]
Hence, $(\bar x, \bar y)$ is a global optimal solution of problem \eqref{eq:MIN}. 

As for (b), first note that $(\bar x, \bar y)$ being a global optimal solution of  problem \eqref{eq:MIN}, we automatically have $\bar y\in \mathcal{S}(\bar x)$. Otherwise, we can find $\tilde{y}\in \mathcal{Y}(\bar x)$ such that 
\[
\mathfrak{f}(\bar x, \bar y) > \mathfrak{f}(\bar x, \tilde{y}).
\]
Note that $(\bar x, \tilde{y})$ is a feasible point to problem \eqref{eq:MIN}. Hence, we have a contradiction.
It therefore follows that for any $x\in \mathcal{X}$ and $y\in \mathcal{Y}(x)$, we have 
\begin{equation}\label{eq:IneqIm}
    \phi(\bar x)= \mathfrak{f}(\bar x, \bar y) \leq \mathfrak{f}(x, y),
\end{equation}
considering the fact that $\bar y\in \mathcal{S}(\bar x)$. Then given that 
\[
\phi(x)=\left\{\begin{array}{ll}
    +\infty &\mbox{if}\quad  \mathcal{S}(x)=\emptyset,\\[2ex]
     f(x, y^*) \;\mbox{(for some } y^*\in \mathcal{S}(x) \subset \mathcal{Y}(x))\, & \mbox{otherwise,}
\end{array}\right.
\]
it follows from \eqref{eq:IneqIm} and the arbitrary choice of $y\in \mathcal{Y}(x)$ that 
\[
\phi(\bar x)= \mathfrak{f}(\bar x, \bar y) \leq \phi(x).
\]
Therefore, $\bar x$ is a global optimal solution of problem \eqref{eq:MINMIN}.  \qed
\end{proof}

To establish the local relationship between the two problems, we need the inner semicontinuity of the inner optimal solution set-valued mapping $\mathcal{S}$. So, $\mathcal{S}$ will be said to be inner semicontinuous at a point $(\bar x, \bar y)\in \text{gph}\,\mathcal{S}$ if for every sequence $x^k \rightarrow \bar x$, there exists a sequence $y^k \in \mathcal{S}(x^k)$ such that $y^k \rightarrow \bar y$.  Note that for any given set-valued mapping $\Psi$, $(x, y)\in \text{gph}\,\Psi$ iff $y\in \Psi(x)$. It is worth to mention that the concept of inner semicontinuity holds if the corresponding set-valued mapping is lower semicontinuous in the usual sense; see, e.g., \cite{dempe2007new} for relevant discussion, some references, and some sufficient conditions that ensure the fulfillment of lower semicontinuity of set-valued mappings that are relevant to  $\mathcal{S}$, as described by \eqref{eq:mathcal-S}. 

\begin{lemma}\label{thm:LocalRelationship}The following statements hold true:
\begin{itemize}
    \item[(a)] Let $\bar x$ be a local optimal solution of problem \eqref{eq:MINMIN}. Then, for all $\bar y\in \mathcal{S}(\bar x)$, the point $(\bar x, \bar y)$ is a local optimal solution of problem \eqref{eq:MIN}. 
    \item[(b)] Let the point  $(\bar x, \bar y)\in \text{gph}\,\mathcal{S}$, where $\mathcal{S}$ is inner semicontinuous, be a local optimal solution of   \eqref{eq:MIN}. Then $\bar x$ is a local optimal solution of problem \eqref{eq:MINMIN}. 
\end{itemize}
\end{lemma}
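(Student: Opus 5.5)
Both parts follow the pattern of the proof of Lemma~\ref{minmin_min_rel}, with neighborhoods replacing global statements; in (b) inner semicontinuity is what makes the localization work.

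For (a), I take the neighborhood $U$ of $\bar x$ from Definition~\ref{def:SolutionConcept_Pp} and set $W:=U\times\mathbb{R}^m$, which is a neighborhood of $(\bar x,\bar y)$. For any $(x,y)\in\Omega\cap W$ we have $x\in\mathcal{X}\cap U$ and $y\in\mathcal{Y}(x)$. Hence
\[
\mathfrak{f}(\bar x,\bar y)=\phi(\bar x)\le\phi(x)\le\mathfrak{f}(x,y).
\]
The equality uses $\bar y\in\mathcal{S}(\bar x)$, the first inequality is local optimality of $\bar x$, and the last is the definition of $\phi$ as an infimum over $\mathcal{Y}(x)$. This finishes (a); nothing delicate occurs.

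For (b), let $W$ be the neighborhood from local optimality of $(\bar x,\bar y)$, and shrink it to a product $U_1\times V$ with $U_1$ a neighborhood of $\bar x$ and $V$ a neighborhood of $\bar y$. I argue by contradiction. Suppose $\bar x$ is not locally optimal for \eqref{eq:MINMIN}. Then there is a sequence $x^k\in\mathcal{X}$ with $x^k\to\bar x$ and $\phi(x^k)<\phi(\bar x)$.

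Inner semicontinuity of $\mathcal{S}$ at $(\bar x,\bar y)$ gives $y^k\in\mathcal{S}(x^k)$ with $y^k\to\bar y$. In particular $\mathcal{S}(x^k)\neq\emptyset$, so $\phi(x^k)=\mathfrak{f}(x^k,y^k)$, and $(x^k,y^k)\in\Omega$ because $y^k\in\mathcal{Y}(x^k)$. For $k$ large, $(x^k,y^k)\in W$. Local optimality of $(\bar x,\bar y)$ then yields
\[
\mathfrak{f}(\bar x,\bar y)\le\mathfrak{f}(x^k,y^k)=\phi(x^k)<\phi(\bar x).
\]
Since $\bar y\in\mathcal{S}(\bar x)$, we have $\phi(\bar x)=\mathfrak{f}(\bar x,\bar y)$, which is a contradiction.

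The only real obstacle is the role of the hypotheses in (b). Without inner semicontinuity, the inner minimizers at nearby $x$ could lie far from $\bar y$, outside $W$, and local optimality in the joint variables would say nothing about them. The assumption $(\bar x,\bar y)\in\text{gph}\,\mathcal{S}$ is needed both for $\phi(\bar x)=\mathfrak{f}(\bar x,\bar y)$ and to invoke inner semicontinuity at that point. The sequence formulation is what lets us use the pointwise definition of inner semicontinuity directly. One should also note that if $\mathcal{S}(x)=\emptyset$ near $\bar x$, inner semicontinuity excludes this anyway, so there is no issue with $\phi(x)=+\infty$ versus an unattained infimum.
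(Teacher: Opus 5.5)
Your proof is correct and follows essentially the same route as the paper. Part (b) is exactly the paper's contradiction argument, using inner semicontinuity to lift a sequence $x^k\to\bar x$ with $\phi(x^k)<\phi(\bar x)$ to feasible points $(x^k,y^k)\to(\bar x,\bar y)$ with $\mathfrak{f}(x^k,y^k)=\phi(x^k)$. In part (a) you argue directly with the neighborhood $U\times\mathbb{R}^m$ and the chain $\mathfrak{f}(\bar x,\bar y)=\phi(\bar x)\le\phi(x)\le\mathfrak{f}(x,y)$, whereas the paper runs the same inequalities as a contradiction along a sequence; this difference is only cosmetic.
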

\begin{proof}(a) Assume that there is some $\bar y\in \mathcal{S}(\bar x)$ such that $(\bar x, \bar y)$ is not a local optimal solution of \eqref{eq:MIN}. Then we can find a sequence $(x^k, y^k)$ from $\Omega$ with $x^k \rightarrow \bar x$ and $y^k \rightarrow \bar y$ such that
\[
\mathfrak{f}(x^k, y^k) < \mathfrak{f}(\bar x, \bar y) = \phi(\bar x) \;\mbox{ for all }\, k.
\]
Then considering the definition of $\phi$, it follows that 
\[
\phi(x^k) \leq \mathfrak{f}(x^k, y^k) < \mathfrak{f}(\bar x, \bar y) = \phi(\bar x) \;\mbox{ for all }\, k.
\]
Therefore, contradicting the fact that the point $\bar x$ is a local optimal solution of problem \eqref{eq:MINMIN}, given that  we have $x^k\in \mathcal{X}$ for all $k$.

(b) If $\bar x$ is not a local optimal solution of problem \eqref{eq:MINMIN}, then we can find a feasible sequence $x^k\rightarrow \bar x$ such that $\phi(\bar x) > \phi(x^k)$ for all $k$. As the set-valued mapping $\mathcal{S}$ is inner semicontinuous at $(\bar x, \bar y)$, we can find a sequence $y^k\in \mathcal{S}(x^k)$ that converges to $\bar y$. It follows by the construction that 
\[
\mathfrak{f}(\bar x, \bar y) = \phi(\bar x) > \phi(x^k) = \mathfrak{f}\left(x^k, y^k\right)
\]
with $x^k\in X$, $y^k\in \mathcal{Y}(x^k)$ for all $k$. This implies that  $(\bar x, \bar y)$ is not  locally optimal for \eqref{eq:MIN}.  \qed
\end{proof}
The proof of Lemma \ref{thm:LocalRelationship} follows along the lines of \cite[Theorem 6.9]{dempe2012sensitivity}, in the context of the link between the original and standard optimistic bilevel programs. But we include it here for completeness. Also consider the following example based on Example 6.10 from the latter reference: 
\[
\mathfrak{f}(x, y):=x, \;\; \mathcal{X}:=[-1, \; 1],\; \mbox{ and }\; \mathcal{Y}(x):=\left\{\begin{array}{lll}
  [0,\, 1] & \mbox{ if }  & x=0,  \\
   \{0\} & \mbox{ if }  & x > 0,  \\
   \{1\} & \mbox{ if }  & x < 0.  \\
\end{array}\right.
\]
We can easily check that the point $(0, 0)$ is a local optimal solution of the corresponding problem \eqref{eq:MIN}, while $0$ is not a local optimal solution of problem \eqref{eq:MINMIN}.  Furthermore, as $\mathcal{S}(x)=\mathcal{Y}(x)$ for all $x\in \mathcal{X}$, $\mathcal{S}$ is not inner semicontinuous at $(0, 0)$. This confirms the importance of the inner semicontinuity assumption in part (b) of Lemma \ref{thm:LocalRelationship}.

\section{Duality and new single-level reformulations}\label{sec:Single-level-Reform} %
In this section, we will introduce a true single-level reformulation for \eqref{eq:Pessimistic-Problem} and establish suitable global and local relationships. As mentioned in Section~\ref{sec:Intro}, by \textit{true} we mean that the reformulation is a standard nonlinear optimization problem involving neither complementarity constraints nor value functions, and such that classical constraint qualifications hold in solution points. Throughout the remainder of this paper, in addition to Assumption~\ref{ass:Sp_domain},  we will also consider Assumption~\ref{Assumption1} a blanket assumption.

\subsection{A dual reformulation of the intermediate problem} 
For the subsequent analysis it is crucial to note that the pessimistic bilevel problem \eqref{eq:Pessimistic-Problem} actually possesses a three-level structure. In the lower-level problem \eqref{eq:LL} the function $f(x,\cdot)$ is minimized over $Y(x)$, in the intermediate-level problem \eqref{eq:IL} the function $F(x,\cdot)$ is maximized over $S_L(x)$, while in the upper-level the function $\varphi_p$ from \eqref{eq:def_phi_p}
is minimized over $X$. The main idea leading to single-level reformulations will, under appropriate convexity assumptions, be based on a dual description of the maximal value $\varphi_p(x)$ of \eqref{eq:IL} for each fixed $x\in X$ and, hence, of the upper-level problem's objective function $\varphi_p$. To this end, we write the intermediate problem in its equivalent optimal-value function formulation with $Z(x)$ given in \eqref{eq:Zx},
\begin{equation}\label{eq:ILv}\tag{\text{$IL^v(x)$}}
    \max_y\,F(x,y)\ \text{ s.t. }\ y\in Z(x)=\left\{y\in Y(x)\left| \; f(x,y)-\varphi_L(x)\leq0\right.\right\}.
\end{equation}

Since for fixed $x\in X$ also $\varphi_L(x)$ is a constant, the above optimal-value function formulation is rather an ``optimal value formulation'', which reflects the structure of a so-called  \textit{simple bilevel program}; see, e.g., \cite{shehu2021inertial}, for an overview on the subject. This shall promote a beneficial structure of the single-level reformulation to be introduced below. We emphasize, however, that for each $x\in X$ the MFCQ is violated everywhere in $Z(x)$. Indeed, in view of $Z(x)=S_L(x)$, each $y\in Z(x)$ is a minimal point of the lower-level problem and, thus, satisfies the necessary optimality condition of Fritz-John. The latter prevents the MFCQ from holding at $y$.

Since Lemma~\ref{eq:Wolfe_Dual_P} does not need the MFCQ but holds under the weaker GCQ, it still makes sense to consider the Wolfe dual \eqref{eq:min_standard_dual} to \eqref{eq:ILv} for fixed $x\in X$. Indeed, this Wolfe dual consists in the minimization of the Lagrangian
\begin{equation}\label{eq:LLpD}
    \mathcal{L}^L_{p_D}(x, y, u, v, w) := F(x, y) - u^\top g(x, y) - v^\top h(x, y)- w(f(x, y) - \varphi_L(x)),
\end{equation}
defined from $\mathbb{R}^n\times \mathbb{R}^m\times \mathbb{R}^p\times \mathbb{R}^q\times \mathbb{R}$ to $\mathbb{R}$, over the set
\[
\Lambda_{p_D}(x):=  \left\{(y, u, v, w)\in \mathbb{R}^m\times \mathbb{R}^p \times \mathbb{R}^q\times \mathbb{R}\,\left|\begin{array}{l}
u\geq 0, \;\; w\geq 0\\[2ex]
         \nabla_y \mathcal{L}^L_{p_D} (x, y, u, v, w)=0
    \end{array} \right. \right\}.
\]
By
\[
\varphi_{p_D}(x):=\underset{(y, u, v, w)\in \Lambda_{p_D}(x)}{\min}~\mathcal{L}^L_{p_D}(x, y, u, v, w),
\]
we denote the minimal value function of the Wolfe dual, defined on $X$. Corresponding to the problem \eqref{eq:Pessimistic-Problem}, where $\varphi_p$ is minimized over $X$, we introduce the problem 
\begin{equation}\label{eq:P_pD}\tag{\text{$P_{p_D}$}}
    \underset{x\in X}{\min}~\varphi_{p_D}(x).
\end{equation}
For $x\in X$,
we denote the set of minimal points associated to  $\varphi_{p_D}(x)$
by
\begin{align}\label{eq:SLpD}
S^L_{p_D}(x) := \underset{\quad\;\,(y, u, v, w)\in \Lambda_{p_D}(x)}{\arg\min}~\mathcal{L}^L_{p_D}(x, y, u, v, w).
\end{align}

Observe that the Lagrangian $\mathcal{L}^L_{p_D}$ from \eqref{eq:LLpD} relies on the lower-level value function $\varphi_L$, which is implicitly defined. Considering potential challenges that could arise in its algorithmic treatment, and to arrive at a true single-level reformulation, let us also introduce the Lagrangian-type real-valued function $\mathcal{L}_{p_D}$ defined by 
\[
\begin{array}{l}
   \forall (x, y, z, u, v, w)\in \mathbb{R}^n\times \mathbb{R}^m\times \mathbb{R}^m\times \mathbb{R}^p\times \mathbb{R}^q\times \mathbb{R}: \\[2ex]
     \mathcal{L}_{p_D}(x, y, z, u, v, w) := F(x, y) - u^\top g(x, y) - v^\top h(x, y) - w\left(f(x, y) - f(x, z)\right).
\end{array}
\]
We can easily observe 
that for any quintuple  $(x, y, u, v, w)\in \mathbb{R}^n\times \mathbb{R}^m\times \mathbb{R}^p \times \mathbb{R}^q\times \mathbb{R}$ with $w\geq 0$, the Lagrangian-type function $\mathcal{L}^L_{p_D}$ can be rewritten as
\begin{align}\label{eq:LminL}
\mathcal{L}^L_{p_D}(x, y, u, v, w) = \underset{z\in Y(x)}{\min} \mathcal{L}_{p_D}(x, y, z, u, v, w),
\end{align}
and that
\begin{align}
 \nabla_y\mathcal{L}^L_{p_D}(x, y, u, v, w) &=  \nabla_y\mathcal{L}_{p_D}(x, y, z, u, v, w)\label{eq:LminLgrad}\\
 &= \nabla_y F(x, y) - \nabla_y g(x, y)^\top u - \nabla_y h(x, y)^\top v- w \nabla_y f(x, y)\nonumber
\end{align}
holds for any $(x, y, z, u, v, w)\in \mathbb{R}^n\times \mathbb{R}^m\times \mathbb{R}^m\times \mathbb{R}^p\times \mathbb{R}^q\times \mathbb{R}$.

The following lemma will play a crucial role in building the relationships between the problems \eqref{eq:Pessimistic-Problem}, \eqref{eq:P_pD}, and the single-level reformulations \eqref{eq:pSLR}, and \eqref{eq:tSLR} to be introduced in Section~\ref{sec:single-level-reformulations}. 
\begin{lemma}\label{lem:phi_p=phi_pD} %
For all $x\in X$, it holds that %
\begin{equation}\label{eq:phi_pD}
  \varphi_p(x)=\varphi_{p_D}(x) = \underset{(y, u, v, w)\in \Lambda_{p_D}(x)}{\min}~\underset{z\in Y(x)}{\min}\mathcal{L}_{p_D}(x, y, z, u, v, w).
\end{equation}
\end{lemma}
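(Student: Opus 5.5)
The first equality $\varphi_p(x)=\varphi_{p_D}(x)$ will come from the strong Wolfe duality of Lemma~\ref{eq:Wolfe_Dual_P}, applied to the intermediate problem \eqref{eq:ILv} for a fixed $x\in X$. The second equality will then follow from the identity \eqref{eq:LminL}. Since Lemma~\ref{eq:Wolfe_Dual_P} is stated for minimization, I first rewrite \eqref{eq:ILv} as $\min_y -F(x,y)$ subject to $g(x,y)\le 0$, $h(x,y)=0$ and $f(x,y)-\varphi_L(x)\le 0$. Here $\varphi_L(x)$ is a finite constant by Assumption~\ref{ass:Sp_domain}. By Assumption~\ref{Assumption1}(1)--(4), this problem has a convex objective $-F(x,\cdot)$, convex inequality constraints $g_i(x,\cdot)$ and $f(x,\cdot)-\varphi_L(x)$, and affine equality constraints $h(x,\cdot)$.

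Next I take any $\bar y\in S_p(x)$, which exists by Assumption~\ref{ass:Sp_domain}. Then $\bar y$ is optimal for this convex problem, and by Assumption~\ref{Assumption1}(5) it satisfies \eqref{eq:GCQ} in $Z(x)$. Lemma~\ref{eq:Wolfe_Dual_P} therefore provides multipliers $(\bar u,\bar v,\bar w)$ with $\bar u\ge0$, $\bar w\ge0$ such that $(\bar y,\bar u,\bar v,\bar w)$ is optimal for the Wolfe dual and the optimal values agree.

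The sign bookkeeping is the part I would check carefully. The Lagrangian of the minimization form is $-F+u^\top g+v^\top h+w(f-\varphi_L)=-\mathcal{L}^L_{p_D}$, so the Wolfe dual's constraint set is exactly $\Lambda_{p_D}(x)$. Maximizing $-\mathcal{L}^L_{p_D}$ over it is the same as minimizing $\mathcal{L}^L_{p_D}$. The equality of values becomes $-F(x,\bar y)=-\mathcal{L}^L_{p_D}(x,\bar y,\bar u,\bar v,\bar w)$, and optimality of the dual point means this common value is the dual optimum, with the minimum attained. Together these give $\varphi_p(x)=F(x,\bar y)=\min_{\Lambda_{p_D}(x)}\mathcal{L}^L_{p_D}=\varphi_{p_D}(x)$.

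To guard against a misreading of Lemma~\ref{eq:Wolfe_Dual_P}, I would also prove weak duality directly, since it is short. For $(y,u,v,w)\in\Lambda_{p_D}(x)$, the function $\mathcal{L}^L_{p_D}(x,\cdot,u,v,w)$ is concave in its $y$-argument: $F(x,\cdot)$ is concave, $-u^\top g(x,\cdot)$ is concave because $u\ge0$, $-v^\top h(x,\cdot)$ is affine, and $-wf(x,\cdot)$ is concave because $w\ge0$. Its gradient vanishes at $y$, so $y$ maximizes it over all of $\mathbb{R}^m$. Hence, for any $y'\in Z(x)$,
\[
\mathcal{L}^L_{p_D}(x,y,u,v,w)\ \ge\ \mathcal{L}^L_{p_D}(x,y',u,v,w)\ \ge\ F(x,y'),
\]
where the second inequality uses the signs of the multipliers and feasibility of $y'$. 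This gives $\varphi_{p_D}(x)\ge\varphi_p(x)$. Attainment at $(\bar y,\bar u,\bar v,\bar w)$ gives the reverse inequality, which is the part supplied by Lemma~\ref{eq:Wolfe_Dual_P} via the GCQ.

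Finally, by \eqref{eq:LminL}, valid because $w\ge0$ on $\Lambda_{p_D}(x)$, each value $\mathcal{L}^L_{p_D}(x,y,u,v,w)$ equals $\min_{z\in Y(x)}\mathcal{L}_{p_D}(x,y,z,u,v,w)$. The inner minimum is attained: since $w\ge0$, minimizing over $z$ amounts to minimizing $f(x,\cdot)$ over $Y(x)$, whose minimizers are exactly $S_L(x)\neq\emptyset$. Taking the minimum over $\Lambda_{p_D}(x)$ on both sides yields the second equality in \eqref{eq:phi_pD}.

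I expect the main obstacle to be the attainment and duality step. MFCQ fails everywhere in $Z(x)$, so everything hinges on invoking Lemma~\ref{eq:Wolfe_Dual_P} under the GCQ from Assumption~\ref{Assumption1}(5). This requires matching its hypotheses precisely, including treating $\varphi_L(x)$ as a constant so that the value-function constraint is convex.
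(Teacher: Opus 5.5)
Your proposal is correct and follows the paper's proof: strong Wolfe duality (Lemma~\ref{eq:Wolfe_Dual_P}) is applied to \eqref{eq:ILv} at a point of $S_p(x)$, using the GCQ from Assumption~\ref{Assumption1}(5), which gives $\varphi_p(x)=\varphi_{p_D}(x)$, and \eqref{eq:LminL} then gives the second equality. Your sign bookkeeping and direct weak-duality check only make explicit what the paper takes from the cited lemma; one small imprecision is that for $w=0$ the inner minimizers over $z$ are all of $Y(x)$ rather than $S_L(x)$, but attainment holds in either case.
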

\begin{proof}
    Under Assumptions \ref{ass:Sp_domain} and \ref{Assumption1}, for each $x\in X$, \eqref{eq:ILv} possesses an optimal point $y\in S_p(x)$ which is also a KKT point with corresponding multipliers $(u,v,w)$. By the strong Wolfe duality result from Lemma \ref{eq:Wolfe_Dual_P}, this yields the minimality of $(y,u,v,w)$ for the Wolfe dual and, thus, we have $\varphi_p(x)=F(x,y)= \mathcal{L}^L_{p_D}(x, y, u, v, w) =\varphi_{p_D}(x)$.
The result therefore holds in view of \eqref{eq:LminL}.   \qed
\end{proof} 
\begin{remark}\label{rem:dual_y_not_optimal}
    The proof of Lemma~\ref{lem:phi_p=phi_pD} uses the fact that each KKT point $y$ of  \eqref{eq:ILv} with corresponding multipliers $(u,v,w)$ generates a dually optimal point $(y,u,v,w)\in S^L_{p_D}(x)$. We emphasize that \emph{not all} elements of $S^L_{p_D}(x)$ need to correspond to KKT points of  \eqref{eq:ILv}, since the primal feasibility condition $y\in Z(x)$ is not part of the definition of the Wolfe dual. In particular, for an optimal point $(y,u,v,w)$ of the Wolfe dual, the point $y$ need not lie in $S_p(x)$ unless $y$ is primally feasible. Nor do $u$, $v$, $w$ need to satisfy complementary slackness conditions with the corresponding constraint functions.
\end{remark}
Under the assumptions of Lemma \ref{lem:phi_p=phi_pD} the problems \eqref{eq:Pessimistic-Problem} and \eqref{eq:P_pD} are globally and locally equivalent, with the understanding that solution concepts for \eqref{eq:P_pD} are analogous to those of \eqref{eq:Pessimistic-Problem} (see Definition \ref{def:local_optimal_sol} or Definition \ref{def:SolutionConcept_Pp} for a general framework for such an optimal solution notion). 
While, in particular, for each $x\in X$, the set $\Lambda_{p_D}(x)$ is nonempty, we emphasize that it is also unbounded. Indeed, since the MFCQ is violated everywhere in $Z(x)$, by \cite{gauvin1977necessary} the set of Lagrange multipliers corresponding to $y\in S_p(x)$ is empty or unbounded, where the first alternative is ruled out by the assumption of the GCQ at $y$. Since the set of $y\in S_p(x)$ with corresponding Lagrange multipliers forms a subset of $\Lambda_{p_D}(x)$, also the latter is unbounded. This unboundedness is inherited by the feasible sets of the single-level problems \eqref{eq:pSLR} and \eqref{eq:tSLR} introduced in the subsequent subsection (which, of course, does not entail that also the objective functions of these problems are unbounded on the respective feasible sets).

\subsection{Single-level reformulations}\label{sec:single-level-reformulations}
Next, we introduce two single-level optimization problems associated to the pessimistic bilevel optimization problem \eqref{eq:Pessimistic-Problem}. We start with the preliminary single-level reformulation defined by 
\begin{equation}\label{eq:pSLR}\tag{pSLR}
\begin{array}{rl}
   \underset{x, y, u, v, w}{\min} & \mathcal{L}^L_{p_D}(x, y, u, v, w) \\[1ex]
     \mbox{s.t.} & x\in X, \;\, u\geq 0, \;\, w\geq 0,\\[1ex]
                 &\nabla_y F(x, y) - \nabla_y g(x, y)^\top u - \nabla_y h(x, y)^\top v- w \nabla_y f(x, y) =0
\end{array}
\end{equation}
in whose objective function the implicitly defined and algorithmically potentially challenging function $\varphi_L$ appears.
For the latter reason, we also introduce the true single-level reformulation 
\begin{equation}\label{eq:tSLR}\tag{tSLR}
\begin{array}{rl}
   \underset{x, y, z, u, v, w}{\min} & \mathcal{L}_{p_D}(x, y, z, u, v, w) \\[1ex]
     \mbox{s.t.} & x\in X, \;\, g(x, z)\leq 0, \;\, h(x, z)=0, \;\, u\geq 0, \;\, w\geq 0,\\[1ex]
                 &\nabla_y F(x, y) - \nabla_y g(x, y)^\top u - \nabla_y h(x, y)^\top v- w \nabla_y f(x, y) =0.
\end{array}
\end{equation}

We remark that, while the last equality constraint in problem \eqref{eq:tSLR} originates from the condition $\nabla_y \mathcal{L}^L_{p_D} (x, y, u, v,w)=0$, in view of \eqref{eq:LminLgrad} it may as well be written as $\nabla_y \mathcal{L}_{p_D} (x, y, z, u, v,w)=0$. 
Hence, for each fixed $(x,z,u,v,w)$, $y$ is a critical point of $\mathcal{L}_{p_D}(x,\cdot,z,u,v,w)$. Since this function is concave, $y$ is even a global maximal point of $\mathcal{L}_{p_D}(x,\cdot,z,u,v,w)$ over $\R^m$. Therefore, as in the original Wolfe duality argument, the objective function of problem \eqref{eq:tSLR} may be replaced by $\max_{y\in\R^m}\mathcal{L}_{p_D}(x,y,z,u,v,w)$, and the last equality constraint of problem \eqref{eq:tSLR} may instead be dropped. Under appropriate additional linearity assumptions on the defining functions, the variable $y$ can indeed be eliminated completely from problem \eqref{eq:tSLR}; see Section~\ref{sec:LP}.

Next, we provide links between problems \eqref{eq:P_pD} and \eqref{eq:pSLR}.

\begin{theorem}\label{them:LocalRelationship}
It holds that:
\begin{itemize}
    \item[(a)] If $\bar x$ is globally optimal for problem \eqref{eq:P_pD}, then for all $(\bar y, \bar u, \bar v, \bar w)\in S^L_{p_D}(\bar x)$, the point $(\bar x, \bar y, \bar u, \bar v, \bar w)$ is globally optimal for problem \eqref{eq:pSLR}. Conversely, if $(\bar x, \bar y, \bar u, \bar v, \bar w)$ is a global optimal solution of \eqref{eq:pSLR}, then $\bar x$ is a globally optimal for problem \eqref{eq:P_pD}.
    \item[(b)] If $\bar x$ is a local optimal solution of  \eqref{eq:P_pD}, then for all $(\bar y, \bar u, \bar v, \bar w)\in S^L_{p_D}(\bar x)$, the point  $(\bar x, \bar y, \bar u, \bar v, \bar w)$ is a local optimal solution of \eqref{eq:pSLR}. Conversely, let $(\bar x, \bar y, \bar u, \bar v, \bar w)$, where $S^L_{p_D}$ from \eqref{eq:SLpD} is inner semicontinuous, be a local optimal solution of  \eqref{eq:pSLR}, then $\bar x$ is a local optimal solution of \eqref{eq:P_pD}.
\end{itemize}
\end{theorem}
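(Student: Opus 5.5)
The plan is to recognize \eqref{eq:P_pD} versus \eqref{eq:pSLR} as an instance of the minmin versus single-min framework of Subsection~3.2, and then to invoke Lemma~\ref{minmin_min_rel} for part (a) and Lemma~\ref{thm:LocalRelationship} for part (b).

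The identification is as follows. The outer variable is $x$ with $\mathcal{X}:=X$. The inner variable is the quadruple $(y,u,v,w)\in\mathbb{R}^m\times\mathbb{R}^p\times\mathbb{R}^q\times\mathbb{R}$. The inner feasible map is $\mathcal{Y}(x):=\Lambda_{p_D}(x)$, and the objective is $\mathfrak{f}(x,(y,u,v,w)):=\mathcal{L}^L_{p_D}(x,y,u,v,w)$. With these choices the inner optimal value function $\phi$ coincides with $\varphi_{p_D}$, and the inner solution map $\mathcal{S}$ coincides with $S^L_{p_D}$ from \eqref{eq:SLpD}. Moreover, \eqref{eq:MINMIN} is exactly \eqref{eq:P_pD}. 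The set $\Omega=\{(x,y,u,v,w)\mid x\in X,\ (y,u,v,w)\in\Lambda_{p_D}(x)\}$ is exactly the feasible set of \eqref{eq:pSLR}: the constraints $u\ge0$, $w\ge0$ and $\nabla_y\mathcal{L}^L_{p_D}=0$ are precisely the constraints of \eqref{eq:pSLR}, by the gradient formula \eqref{eq:LminLgrad}. Hence \eqref{eq:MIN} is \eqref{eq:pSLR}.

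The one point to check is that the minimum defining $\varphi_{p_D}(x)$ is attained for every $x\in X$. This is needed so that $\phi$ is a genuine minimum and $\mathcal{S}(x)\neq\emptyset$, which the lemmas implicitly use; for instance, Lemma~\ref{minmin_min_rel}(a) is vacuous otherwise, and the proof of Lemma~\ref{thm:LocalRelationship}(b) uses $\phi(x^k)=\mathfrak{f}(x^k,y^k)$. Attainment follows from Lemma~\ref{lem:phi_p=phi_pD}. By Assumptions~\ref{ass:Sp_domain} and~\ref{Assumption1}, any $y\in S_p(x)$ admits KKT multipliers $(u,v,w)$ of \eqref{eq:ILv}, and Lemma~\ref{eq:Wolfe_Dual_P} then shows that $(y,u,v,w)\in S^L_{p_D}(x)$. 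Thus $S^L_{p_D}(x)\neq\emptyset$ and $\varphi_{p_D}(x)=\varphi_p(x)$ is finite. In particular, $\varphi_{p_D}$ is not $-\infty$ anywhere, even though $\Lambda_{p_D}(x)$ is unbounded.

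With this in hand, part (a) is Lemma~\ref{minmin_min_rel}(a),(b) read in the present notation. For part (b), the first statement is Lemma~\ref{thm:LocalRelationship}(a), and the converse is Lemma~\ref{thm:LocalRelationship}(b). In the converse, the hypothesis $(\bar y,\bar u,\bar v,\bar w)\in S^L_{p_D}(\bar x)$ comes from the graph-membership requirement in that lemma, and $S^L_{p_D}$ is assumed inner semicontinuous at that point. I expect no real obstacle. The only care needed is the attainment issue above, together with making sure the local optimality notions match, with neighborhoods taken in the joint variable $(x,y,u,v,w)$.
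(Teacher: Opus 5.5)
Your proposal is correct and follows the paper's own proof. The paper identifies \eqref{eq:P_pD} and \eqref{eq:pSLR} with \eqref{eq:MINMIN} and \eqref{eq:MIN}, using $\mathcal{Y}(x)=\Lambda_{p_D}(x)$, $\mathfrak{f}=\mathcal{L}^L_{p_D}$ and $\mathcal{S}=S^L_{p_D}$, then invokes Lemma~\ref{minmin_min_rel} for (a) and Lemma~\ref{thm:LocalRelationship} for (b). Your extra check, via Lemma~\ref{lem:phi_p=phi_pD}, that $S^L_{p_D}(x)\neq\emptyset$ and $\varphi_{p_D}(x)$ is finite is left implicit in the paper, which only remarks afterwards that Assumption~\ref{ass:Sp_domain} gives $S^L_{p_D}(\bar x)\neq\emptyset$; it is a welcome addition rather than a needed fix.
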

\begin{proof}
    (a) follows from Lemma \ref{minmin_min_rel}, while (b) results from Lemma \ref{thm:LocalRelationship}.    \qed
\end{proof}
Subsequently, we have the following link between problems \eqref{eq:pSLR} and \eqref{eq:tSLR}. To proceed, we introduce the set-valued mapping $S^*_L : \mathbb{R}^n\times \mathbb{R} \rightrightarrows \mathbb{R}^m$ defined by
\begin{equation}\label{eq:wS_L(x)}
    S^*_L(x, w) :=\underset{\qquad z\in Y(x)}{\arg\min}  w f(x, z).
\end{equation}
Obviously, $S^*_L(x, w) = S_L(x)$ if $w>0$ and $S^*_L(x, w) = Y(x)$ if $w=0$.
\begin{theorem}
It holds that:
\begin{itemize}
    \item[(a)] If $(\bar x, \bar y, \bar u, \bar v, \bar w)$ is globally optimal for \eqref{eq:pSLR}, then for all $\bar z\in S^*_L(\bar x, \bar w)$, the point $(\bar x, \bar y, \bar z, \bar u, \bar v, \bar w)$ is globally optimal for  \eqref{eq:tSLR}. Conversely, if  $(\bar x, \bar y, \bar z, \bar u, \bar v, \bar w)$ is a global optimal solution of problem \eqref{eq:tSLR}, then $(\bar x, \bar y, \bar u, \bar v, \bar w)$ is a global optimal solution of  \eqref{eq:pSLR}.
    \item[(b)] If $(\bar x, \bar y, \bar u, \bar v, \bar w)$ is locally optimal for \eqref{eq:pSLR}, then for all $\bar z\in S^*_L(\bar x, \bar w)$, $(\bar x, \bar y, \bar z, \bar u, \bar v, \bar w)$ is locally optimal for \eqref{eq:tSLR}. Conversely, let  $(\bar x, \bar y, \bar z, \bar u, \bar v, \bar w)$, with  $S^*_L$  inner semicontinuous at $(\bar x, \bar w, \bar z)$, be locally optimal for \eqref{eq:tSLR}, then $(\bar x, \bar y, \bar u, \bar v, \bar w)$ is locally optimal for \eqref{eq:pSLR}.
\end{itemize}
\end{theorem}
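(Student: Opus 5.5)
The plan is to recast the passage from \eqref{eq:pSLR} to \eqref{eq:tSLR} as an instance of the minmin framework of Section~\ref{sec:Preliminaries}, and then read off (a) from Lemma~\ref{minmin_min_rel} and (b) from Lemma~\ref{thm:LocalRelationship}.

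\textbf{Setting up the minmin structure.} The outer variable is the quintuple $\xi:=(x,y,u,v,w)$. The outer feasible set $\mathcal{X}$ is the feasible set of \eqref{eq:pSLR}, namely $x\in X$, $u\geq 0$, $w\geq 0$, together with the gradient equation. The inner variable is $z$, with inner feasible set $\mathcal{Y}(\xi):=Y(x)$, and the objective is $\mathfrak{f}(\xi,z):=\mathcal{L}_{p_D}(x,y,z,u,v,w)$.

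With these choices, the single-min problem \eqref{eq:MIN} is exactly \eqref{eq:tSLR}. The constraints $g(x,z)\leq 0$ and $h(x,z)=0$ say precisely that $z\in Y(x)$. As remarked after \eqref{eq:tSLR}, the gradient constraint does not involve $z$.

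On $\mathcal{X}$ we have $w\geq 0$, so identity \eqref{eq:LminL} gives
\[
\phi(\xi)=\min_{z\in Y(x)}\mathfrak{f}(\xi,z)=\mathcal{L}^L_{p_D}(x,y,u,v,w).
\]
Hence \eqref{eq:MINMIN} coincides with \eqref{eq:pSLR}, and local and global optimality in the sense of Definition~\ref{def:SolutionConcept_Pp} are the usual notions for \eqref{eq:pSLR}. I would note that the minimum in \eqref{eq:LminL} is attained: by Assumption~\ref{ass:Sp_domain}, $S_L(x)\neq\emptyset$, which suffices when $w>0$; when $w=0$ the objective is constant in $z$ and $Y(x)\neq\emptyset$.

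\textbf{Identifying the inner solution map.} This is the one step that needs a short computation. In $\mathcal{L}_{p_D}$, the only term depending on $z$ is $+w f(x,z)$. Therefore
\[
\mathcal{S}(\xi)=\underset{z\in Y(x)}{\arg\min}\; w f(x,z)=S^*_L(x,w),
\]
with $S^*_L$ as in \eqref{eq:wS_L(x)}.

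\textbf{Concluding.} Part (a) now follows from Lemma~\ref{minmin_min_rel}(a),(b). Part (b), first statement, follows from Lemma~\ref{thm:LocalRelationship}(a).

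For the converse in (b), Lemma~\ref{thm:LocalRelationship}(b) requires $\bar z\in\mathcal{S}(\bar\xi)$ and inner semicontinuity of $\mathcal{S}$ at $(\bar\xi,\bar z)$.

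Inner semicontinuity transfers directly. $\mathcal{S}(\xi)=S^*_L(x,w)$ depends on $\xi$ only through $(x,w)$. So if $\xi^k\to\bar\xi$, then $(x^k,w^k)\to(\bar x,\bar w)$, and the assumed inner semicontinuity of $S^*_L$ at $(\bar x,\bar w,\bar z)$ gives $z^k\in\mathcal{S}(\xi^k)$ with $z^k\to\bar z$.

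Membership $\bar z\in S^*_L(\bar x,\bar w)$ is built into the hypothesis: inner semicontinuity of $S^*_L$ at $(\bar x,\bar w,\bar z)$ is only defined for points of its graph, exactly as in Lemma~\ref{thm:LocalRelationship}(b). If one preferred to avoid this reading, I would instead argue via local optimality in $z$ alone. For $\bar w>0$ the problem $\min_{z\in Y(\bar x)}\bar w f(\bar x,z)$ is convex, so a local minimizer is global; for $\bar w=0$ every $z\in Y(\bar x)$ is optimal.

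The main obstacle is simply this bookkeeping: checking that \eqref{eq:MINMIN} reproduces \eqref{eq:pSLR} exactly, which hinges on $w\geq 0$ being imposed in $\mathcal{X}$ so that \eqref{eq:LminL} applies.
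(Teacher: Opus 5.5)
Your proposal is correct and follows the paper's own proof. The paper likewise rewrites \eqref{eq:pSLR} as the minmin problem \eqref{eq:pSLR2} with inner problem $\min_{z\in Y(x)}\mathcal{L}_{p_D}$, identifies the inner solution map with $S^*_L(x,w)$ via \eqref{eq:SL=SLpD}, and invokes Lemma~\ref{minmin_min_rel} and Lemma~\ref{thm:LocalRelationship}; your extra checks (attainment of the inner minimum, transfer of inner semicontinuity from $S^*_L$ to the map on $(x,y,u,v,w)$, and $\bar z\in S^*_L(\bar x,\bar w)$ via convexity) fill in details the paper leaves implicit.
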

\begin{proof}
Problem \eqref{eq:pSLR} is globally and locally equivalent to the problem
\begin{equation}\label{eq:pSLR2}\tag{pSLR2}
 \underset{(x, y, u, v)\in \Omega^L}{\min}~\underset{z\in Y(x)}{\min}\mathcal{L}_{p_D}(x, y, z, u, v, w),
\end{equation}
where the concept of optimal solution is understood in the same sense as in Definition \ref{def:SolutionConcept_Pp}, 
given that we have $\mathcal{L}^L_{p_D}(x, y, u, v, w) = \underset{z\in Y(x)}{\min}\mathcal{L}_{p_D}(x, y, z, u, v, w)$. Note that the outer feasible set $\Omega^L$ in problem \eqref{eq:pSLR2} is given by 
\[
\Omega^L:=\left\{(x, y, u, v, w)\left|~x\in X, \;   (y, u, v, w)\in \Lambda_{p_D}(x)\right.\right\}.
\]
Subsequently, considering the fact that 
\begin{equation}\label{eq:SL=SLpD}
    S^*_L(x, w):=\underset{z\in Y(x)}{\arg\min}~\mathcal{L}_{p_D}(x, y, z, u, v, w),
\end{equation}
(a) and (b) follow from Lemma \ref{minmin_min_rel} and Lemma \ref{thm:LocalRelationship}, respectively. \qed
\end{proof}

\begin{corollary}\label{cor:summary_relationships}
It holds that:
\begin{itemize}
    \item[(a)] If the point $\bar x$ is globally optimal for problem \eqref{eq:Pessimistic-Problem}, then for all $(\bar y, \bar u, \bar v, \bar w)\in S^L_{p_D}(\bar x)$ and $\bar z\in S^*_L(\bar x, \bar w)$, the point  $(\bar x, \bar y, \bar z, \bar u, \bar v, \bar w)$ is globally optimal for \eqref{eq:tSLR}. Conversely, if the point $(\bar x, \bar y, \bar z, \bar u, \bar v, \bar w)$ is globally optimal for \eqref{eq:tSLR}, then $\bar x$ is globally optimal  \eqref{eq:Pessimistic-Problem}.
    \item[(b)] If $\bar x$ is locally optimal for \eqref{eq:Pessimistic-Problem}, then for all $(\bar y, \bar u, \bar v, \bar w)\in S^L_{p_D}(\bar x)$ and $\bar z\in S^*_L(\bar x, \bar w)$, the point  $(\bar x, \bar y, \bar z, \bar u, \bar v, \bar w)$ is locally optimal for \eqref{eq:tSLR}. Conversely, let  $(\bar x, \bar y, \bar z, \bar u, \bar v, \bar w)$, which is such that $S^L_{p_D}$ (resp. $S^*_L$) is inner semicontinuous at $(\bar x, \bar y, \bar u, \bar v, \bar w)$ (resp. $(\bar x, \bar w, \bar z)$), be locally optimal for \eqref{eq:tSLR}, then $\bar x$ is a local optimal solution of  problem \eqref{eq:Pessimistic-Problem}.
\end{itemize}
\end{corollary}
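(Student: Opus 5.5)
The corollary is a chain of the two preceding theorems together with Lemma~\ref{lem:phi_p=phi_pD}. No new analysis is needed; the plan only has to check that the intermediate solution sets line up at each link. The first link is Lemma~\ref{lem:phi_p=phi_pD}, which gives $\varphi_p(x)=\varphi_{p_D}(x)$ for every $x\in X$. Because the two problems have the same feasible set $X$ and identical objective functions, \eqref{eq:Pessimistic-Problem} and \eqref{eq:P_pD} have the same global and local optimal solutions, in the sense of Definition~\ref{def:local_optimal_sol}. This lets us replace \eqref{eq:Pessimistic-Problem} by \eqref{eq:P_pD} everywhere in the statement.

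For (a), forward direction: let $\bar x$ be globally optimal for \eqref{eq:Pessimistic-Problem}, hence for \eqref{eq:P_pD}. Theorem~\ref{them:LocalRelationship}(a) then gives global optimality of $(\bar x,\bar y,\bar u,\bar v,\bar w)$ for \eqref{eq:pSLR} for every $(\bar y,\bar u,\bar v,\bar w)\in S^L_{p_D}(\bar x)$. Applying part (a) of the subsequent theorem relating \eqref{eq:pSLR} and \eqref{eq:tSLR}, we get global optimality of $(\bar x,\bar y,\bar z,\bar u,\bar v,\bar w)$ for \eqref{eq:tSLR} for every $\bar z\in S^*_L(\bar x,\bar w)$.

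For (a), converse direction: a global solution of \eqref{eq:tSLR} projects to a global solution $(\bar x,\bar y,\bar u,\bar v,\bar w)$ of \eqref{eq:pSLR} by the second theorem. Its $x$-part is then globally optimal for \eqref{eq:P_pD} by Theorem~\ref{them:LocalRelationship}(a), hence for \eqref{eq:Pessimistic-Problem}.

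For (b) the same chain runs with local optimality, using the local parts (b) of both theorems. The forward direction needs no semicontinuity, because the first halves of those statements (inherited from Lemma~\ref{thm:LocalRelationship}(a)) hold unconditionally. For the converse we proceed in two steps:
\begin{itemize}
\item[(i)] Inner semicontinuity of $S^*_L$ at $(\bar x,\bar w,\bar z)$ lets the second theorem pass from local optimality in \eqref{eq:tSLR} to local optimality of $(\bar x,\bar y,\bar u,\bar v,\bar w)$ in \eqref{eq:pSLR}.
\item[(ii)] Inner semicontinuity of $S^L_{p_D}$ at $(\bar x,\bar y,\bar u,\bar v,\bar w)$ lets Theorem~\ref{them:LocalRelationship}(b) pass to local optimality of $\bar x$ for \eqref{eq:P_pD}, and thus for \eqref{eq:Pessimistic-Problem}.
\end{itemize}

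The only point needing care is the graph-membership hypotheses that Lemma~\ref{thm:LocalRelationship}(b) implicitly requires, namely $(\bar y,\bar u,\bar v,\bar w)\in S^L_{p_D}(\bar x)$ and $\bar z\in S^*_L(\bar x,\bar w)$. I expect this to be the main obstacle. The plan is to read these memberships as part of the hypothesis "inner semicontinuous at" the given graph points, as the statement does. If needed, we can also derive them from local optimality, as in the proof of Lemma~\ref{minmin_min_rel}(b).

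For $\bar z$, this works as follows. For fixed $(\bar x,\bar y,\bar u,\bar v,\bar w)$, the objective of \eqref{eq:tSLR} as a function of $z$ is $\bar w f(\bar x,z)$ plus a constant. This function is convex, and the feasible set in $z$ is $Y(\bar x)$, which is convex. So local minimality in $z$ over $Y(\bar x)$ is global minimality, which gives $\bar z\in S^*_L(\bar x,\bar w)$.

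The analogous statement for $S^L_{p_D}$ is exactly what the inner-semicontinuity hypothesis at that graph point encodes, so nothing further needs to be proved there.
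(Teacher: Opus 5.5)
Your proposal is correct and follows the paper's route. Like the paper (which records this chain in Fig.~\ref{fig:Diagram}), you chain Lemma~\ref{lem:phi_p=phi_pD} with Theorem~\ref{them:LocalRelationship} and the subsequent theorem relating \eqref{eq:pSLR} and \eqref{eq:tSLR}, and you read the graph memberships into the inner-semicontinuity hypotheses, as the paper itself does in the proof of Theorem~\ref{the:SOSC}. Your extra remark is also valid but not needed: $\bar z\in S^*_L(\bar x,\bar w)$ already follows from local optimality, since $\bar w f(\bar x,\cdot)$ is convex on the convex set $Y(\bar x)$.
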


\begin{figure}[htp]%
    \centering
\begin{center}
\scalebox{0.75}{ %
\begin{tikzpicture}[
  node distance=35mm, %
  font=\large,
  >=Stealth,
  mynode/.style={draw,circle,minimum size=14mm,inner sep=2pt}
]

\node[mynode] (A) {$(P_p)$};
\node[mynode] (B) [right=of A] {$(P_{p_D})$};
\node[mynode] (C) [right=of B] {$(\mathrm{pSLR})$};
\node[mynode] (D) [right=of C] {$(\mathrm{tSLR})$};

\coordinate (ABmid) at ($(A)!0.5!(B)$);
\coordinate (BCmid) at ($(B)!0.5!(C)$);
\coordinate (CDmid) at ($(C)!0.5!(D)$);

\draw[<->,thick]
  (A.east) -- (B.west)
  node[pos=0.5, above] {$(a_0)$};

\draw[->,thick]
  (B.east) .. controls ($(BCmid)+(0,1.6)$) .. (C.west)
  node[pos=0.5, above] {$(a_1)$};

\draw[->,thick]
  (C.west) .. controls ($(BCmid)+(0,-1.6)$) .. (B.east)
  node[pos=0.5, below] {$(a_3)$};

\draw[->,thick]
  (C.east) .. controls ($(CDmid)+(0,1.6)$) .. (D.west)
  node[pos=0.5, above] {$(a_2)$};

\draw[->,thick]
  (D.west) .. controls ($(CDmid)+(0,-1.6)$) .. (C.east)
  node[pos=0.5, below] {$(a_4)$};

\end{tikzpicture}
}
\end{center}
\caption{Links between problems \eqref{eq:Pessimistic-Problem}, \eqref{eq:P_pD}, \eqref{eq:pSLR}, and \eqref{eq:tSLR} with assumptions $(a_1)$ and $(a_2)$ needed for both the corresponding global and local implications, while $(a_3)$ and $(a_4)$ are the required only  for the corresponding local relationships.}\label{fig:Diagram}
\end{figure}

The relationships studied here are summarized in Fig. \ref{fig:Diagram}, where assumptions $(a_0)$, $(a_1)$, $(a_2)$, $(a_3)$, and $(a_4)$ are respectively defined as follows: 
\begin{itemize}
\item[$(a_0)$] Assumption \ref{Assumption1};
    \item[$(a_1)$]$(\bar x, \bar y, \bar u, \bar v, \bar w)$ is such that $(\bar y, \bar u, \bar v, \bar w)\in S^L_{p_D}(\bar x)$;
     \item[$(a_2)$] $(\bar x, \bar y, \bar z, \bar u, \bar v, \bar w)$ is such that $\bar z\in S^*_L(\bar x, \bar w)$;
    \item[$(a_3)$] $S^L_{p_D}$ is inner semicontinuous at $(\bar x, \bar y, \bar u, \bar v, \bar w)$;
    \item[$(a_4)$] $S^*_L$ is inner semicontinuous at $(\bar x, \bar w, \bar z)$.
\end{itemize}

Note that Assumption \ref{ass:Sp_domain} implies that $S^L_{p_D}(\bar x)\neq \emptyset$. So, the fulfillment of $(a_1)$ is not a problem, but the requirement rather emphasizes that for the first implications in Theorem \ref{them:LocalRelationship}(a) and (b), the point $(\bar x, \bar y, \bar u, \bar v, \bar w)$ has to be chosen in a specific way; i.e., such that $(\bar y, \bar u, \bar v, \bar w)\in S^L_{p_D}(\bar x)$. An analogous observation can be made for $(a_2)$, while considering the fact that for all $x\in X$, $S^*_L(x, w) = S_L(x)$ if $w>0$ and $S^*_L(x, w) = Y(x)$ if $w=0$, while $S_p(x) \subset S_L(x) \subset Y(x)$. As for $(a_3)$ and $(a_4)$, they are only needed for the corresponding local relationships. 

\begin{remark}\label{rem:z-in-SL}
    Due to the observation from Remark~\ref{rem:dual_y_not_optimal}, for a solution $(\bar x, \bar y, \bar z, \bar u, \bar v, \bar w)$ of \eqref{eq:tSLR} Corollary~\ref{cor:summary_relationships} neither implies optimality of $\bar y$ for the intermediate problem \eqref{eq:IL} nor can optimality of $\bar z$ for the lower-level problem \eqref{eq:LL} be expected. However, once $\bar x$ has been computed from a solution of \eqref{eq:tSLR}, a corresponding optimal point $\bar z$ can be generated by solving the convex lower-level problem $(LL(\bar x))$, and the solution of the simple bilevel problem $(IL(\bar x))$ yields a corresponding optimal point $\bar y$ (cf. \cite{shehu2021inertial} and the references therein for algorithmic approaches). 
\end{remark}

\subsection{The linear case with respect to the lower-level variable}\label{sec:LP}
This section illustrates how the above constructions can be streamlined under appropriate linearity assumptions. Indeed, we shall employ the following assumption, where the occurring functions $c$, $\gamma$, $A$, $b$ and $d$ map from $X$ to spaces of appropriate dimensions, respectively.

\begin{assumption}\label{ass:polyhedral} It holds that:
    \begin{itemize}
        \item[$(1)$] $\forall x\in X,\ y\in\R^m:\ F(x,y)=c(x)^\top y+\gamma(x)$.
        \item[$(2)$] $\forall x\in X,\ y\in\R^m:\ g(x,y)=A(x)y-b(x)$.
        \item[$(3)$] $\forall x\in X,\ y\in\R^m:\ f(x,y)=d(x)^\top y$.
    \end{itemize}
\end{assumption}
Possibly present equality constraints in the description of $Y(x)$ are also assumed to be affine-linear with respect to $y$ like in Assumption~\ref{Assumption1}(4), but here they may be subsumed in the system $A(x)y\leq b$. Observe that, due the the linearity assumptions, for each $x\in X$ \eqref{eq:ACQ} holds at each $y\in Z(x)$ so that, altogether, Assumption~\ref{ass:polyhedral} implies Assumption~\ref{Assumption1}. Hence, with
\begin{align*}
\mathcal{L}_{p_D}(x, y, z, u, w) = c(x)^\top y+\gamma(x)-u^\top(A(x)y-b(x))-w\,d(x)^\top(y-z)
\end{align*}
and
\begin{align*}
\nabla_y\mathcal{L}_{p_D}(x, y, z, u, w) = c(x)-A(x)^\top u-w\,d(x)
\end{align*}
the true single-level reformulation \eqref{eq:tSLR} reads
\begin{equation*}
   \begin{array}{rl}
   \underset{x, y, z, u, w}{\min} & \mathcal{L}_{p_D}(x, y, z, u, w) \\[1ex]
     \mbox{s.t.} & x\in X, \;\, g(x, z)\leq 0, \;\, u\geq 0, \;\, w\geq 0,\\[1ex]
                 & \nabla_y\mathcal{L}_{p_D}(x, y, z, u, w)=0.
\end{array}
\end{equation*}
In analogy to the fact that Wolfe duality collapses to linear programming duality in the polyhedral case, the equality constraint of this problem can be used to simplify the objective function. This finally results in the problem
\begin{equation}\label{eq:tSLRLP}\tag{tSLR-LP}
   \begin{array}{rl}
   \underset{x, z, u, w}{\min} & \gamma(x)+u^\top b(x)+w\,d(x)^\top z\\[1ex]
     \mbox{s.t.} & x\in X, \;\, A(x)z\leq b(x), \;\, u\geq 0, \;\, w\geq 0,\\[1ex]
                 & c(x)-A(x)^\top u-w\,d(x)=0,
\end{array}
\end{equation}
in which the dependence on $y$ has been eliminated. Replacing the term $w\,d(x)$ in the objective function by another application of the equality constraint yields the alternative expression 
\[
(c(x)^\top z+\gamma(x))-u^\top (A(x)z-b(x))
\]
for the objective function of problem \eqref{eq:tSLRLP}. 
\begin{remark}
A special case of  Assumption~\ref{ass:polyhedral} is complete linearity, i.e., the affine-linearity of $F$, $g$ and $f$ in $(x,y)$. This happens for constant functions $c$, $A$, and $d$, and affine-linear $\gamma$ and $b$. In this case, and if the continuous relaxation of $X$ is a polyhedral set, the relaxed feasible set of \eqref{eq:tSLRLP}  is polyhedral as well. Since the objective functions of  \eqref{eq:tSLRLP} is a sum of linear and bilinear terms, the problem is nonconvex quadratic. This means that every linear pessimistic bilevel program may be rewritten as a nonconvex quadratic program.
Furthermore, observe choosing $X$ as in \eqref{eq:X-and-Y(x)-Linear}, as well as 
\[
A(x):=A,\;\; b(x):=-Bx + b, \;\; c(x):=x^\top Q_{12} - c^\top_2, \;\; \gamma(x):=\frac{1}{2}x^\top Q_{11}x -c^\top_1 x,\;\; d(x):=d,
\]
where $A$, $B$, $b$, $Q_{12}$, $c_2$, $Q_{11}$, $c_1$, and $d$ are constant matrices/vectors, then the problem described in Assumption~\ref{ass:polyhedral} corresponds to the special case of the problem in %
Example~\ref{ex:quadratic} with $Q_{22}=0$. 
\end{remark}

\subsection{Problems with discontinuities}\label{sec:jumps}
One of the main challenges in solving a pessimistic bilevel optimization problem \eqref{eq:Pessimistic-Problem} is the fact that $\varphi_p$ may only be upper semicontinuous (cf. Fig.~\ref{fig:improved-pessimistic-approximations}). This raises the question whether also the single-level reformulations may possess some hidden, but algorithmically unfavorable properties. We prepare our answer by considering the following example.

\begin{example}\label{ex:sawtooth}
Inspired by \cite[Example 2.1]{benchouk2025scholtes}, we consider the problem \eqref{eq:Pessimistic-Problem} with $n=m=1$, 
\[
F(x,y)=x+y,\;\, f(x,y)=xy,\;\, X=[\xi,1],\;\, Y=[0,1],
\]
and a parameter $\xi<0$. One can easily check that
\begin{align*}
    S_L(x)=\begin{cases}\{1\},& x\in[\xi,0),\\ [0,1],& x=0,\\\{0\},& x\in(0,1],\end{cases}\qquad\text{and}\qquad\varphi_L(x)=\begin{cases}x,& x\in[\xi,0],\\0, & x\in(0,1]    
    \end{cases}
\end{align*}
as well as
\begin{align*}
    S_p(x)=\begin{cases}\{1\},& x\in[\xi,0],\\ \{0\},& x\in(0,1]  \end{cases}\qquad\text{and}\qquad\varphi_p(x)=\begin{cases}x+1,& x\in[\xi,0],\\ x, & x\in(0,1],\end{cases}
\end{align*}
so that \eqref{eq:Pessimistic-Problem} in particular satisfies Assumption~\ref{ass:Sp_domain} for all $\xi<0$. 
With the functional description $Y=\{y\in\R\mid 0\leq y\leq 1\}$, also Assumption~\ref{ass:polyhedral} is satisfied. Recall that \eqref{eq:ACQ} holds everywhere in $Z(x)$ since
$Z(x)=\{y\in\R\mid 0\leq y\leq 1,\ xy\leq\varphi_L(x)\}$ 
is polyhedral for all $x\in X$.

For all $\xi\le -1$, \eqref{eq:Pessimistic-Problem} possesses a global optimal solution at $\bar x=\xi$, while for all $\xi\in(-1,0)$ it is not solvable since its infimum zero is not attained. In the latter case the point $\bar x=\xi$ is, however, a local optimal solution.
The case $\xi\in (-1,0)$ shows, in particular, that Assumptions~\ref{ass:Sp_domain} and~\ref{ass:polyhedral} (let alone the more general Assumption~\ref{Assumption1}) are not sufficient for solvability of \eqref{eq:Pessimistic-Problem}.

With $\mathcal{L}_{p_D}(x, y, z, u, w)=x+(1+u_1-u_2-wx)y+u_2+wxz$, the corresponding \eqref{eq:tSLRLP}  is
\begin{align*}
   \begin{array}{rl}
   \underset{x, z, u, w}{\min} & x+u_2+wxz \\[1ex]
     \mbox{s.t.} & \xi\leq x\leq 1, \;\, 0\leq z\leq 1, \;\, u_1,u_2,w\geq 0, \;\, 1+u_1-u_2-wx=0.
\end{array}
\end{align*}
Although at first glance this problem may look well-behaved, by Corollary~\ref{cor:summary_relationships}(a), it is solvable only for $\xi\leq -1$, while it cannot possess a global optimal solution for $\xi\in(-1,0)$. In the latter case, like \eqref{eq:Pessimistic-Problem}, the corresponding problem \eqref{eq:tSLRLP} possesses the non-attained infimum zero. In fact, the feasible points $(x^k,y^k,z^k,u_1^k,u_2^k,w^k)=(1/k,0,0,0,0,k)$ yield the objective values $1/k$ for all $k\in\mathbb{N}$, and the following case distinction shows that all feasible points possess a positive objective value: In effect, for all $x\in[\xi,0]$, we have  
\begin{align*}
    x+u_2+wxz&=x+(1+u_1-wx)+wxz=x+1 + u_1 + wx(z-1)\\
    &\geq x+1\geq\xi+1>0,
\end{align*}
and $x\in(0,1]$ entails $x+u_2+wxz\geq x>0$.
\qed 
\end{example}

We point out that the Weierstrass theorem guarantees solvability of \eqref{eq:Pessimistic-Problem} if $X$ is nonempty and compact, and if $\varphi_p$ is lower semicontinuous on $X$. Given the continuity of $F$, a standard result from parametric optimization yields the lower semicontinuity of $\varphi_p$ if the set-valued mapping $S_L$ is lower semicontinuous on $X$. In Example~\ref{ex:sawtooth}, $S_L$ is not lower semicontinuous at $x=0$ and, in fact, $\varphi_p$ is not lower semicontinuous at $x=0$. For $\xi\in(-1,0)$ the problem \eqref{eq:Pessimistic-Problem} is actually not solvable.

Likewise, after the dualization of $\varphi_p$ to $\varphi_{p_D}$, the lower semicontinuity of $\varphi_p=\varphi_{p_D}$ would follow from the lower semicontinuity of $\varphi_L$ as well the as outer semicontinuity and local boundedness of the set-valued mapping $\Lambda_{p_D}$ on $X$. Sufficient conditions for the lower semicontinuity of $\varphi_L$ are the continuity of $f$ together with the outer semicontinuity and local boundedness of the set-valued mapping $Y$, which may all be considered mild assumptions. Moreover, it is not hard to see that $\Lambda_{p_D}$ is outer semicontinuous on $X$. However, since $\Lambda_{p_D}(x)$ is unbounded (even for all $x\in X$), the local boundedness assumption for $\Lambda_{p_D}$ fails. 
This shows that typical unsolvability issues in the pessimistic bilevel optimization problem \eqref{eq:Pessimistic-Problem} are inherited by the single-level problem \eqref{eq:tSLR}. They can be ruled out by additional assumptions like the lower semicontinuity of $S_L$ on $X$.

\section{Optimality conditions}\label{sec:Optimality conditions}
{We assume throughout in this section that $X\subset \mathbb{R}^n$}. In this section, we derive necessary and sufficient optimality conditions for problem \eqref{eq:tSLR} and show how to leverage on them to obtain necessary and sufficient optimality conditions for the pessimistic bilevel program \eqref{eq:Pessimistic-Problem}. Throughout this section,  we assume here that in problem \eqref{eq:tSLR}, the upper-level feasible set is described as 
\begin{equation}\label{eq:X-def}
    X:=\left\{x\in \mathbb{R}^n\left|\;\, G(x)\leq 0, \;\; H(x)=0\right.\right\}
\end{equation}
with the functions $G : \mathbb{R}^n \rightarrow \mathbb{R}^r$ and $H : \mathbb{R}^n \rightarrow \mathbb{R}^s$ being continuously differentiable. Furthermore, the functions $F$, $f$,  $g$, and $h$ are assumed to be twice continuously differentiable.  

We start in the next subsection with the construction of tractable sufficient conditions to ensure the fulfillment of the constraint qualifications \eqref{eq:MFCQ} and \eqref{eq:LICQ}  for problem \eqref{eq:tSLR}. Subsequently, in Subsection \ref{sec:Necessary and sufficient optimality conditions}, these sufficient conditions are used to  derive necessary optimality conditions for  \eqref{eq:Pessimistic-Problem}, %
before we study the standard second order sufficient condition in Subsection~\ref{sec:Second order sufficient conditions}.

\subsection{Constraint qualifications} To proceed here, note that the \textit{upper-level regularity} will be said to hold at $x$ is the MFCQ, as defined in \eqref{eq:MFCQ}, holds at this point for the constraint system defining the set  $X$. Similarly, the \textit{lower-level regularity} will be said to be satisfied at $(x, z)$ if the MFCQ is satisfied at this point for the constraint system describing the set  $Y(x)$. From now on, as necessary, we will use the notation $\zeta:=(x, y, z, u, v, w)$. 
The next result provides sufficient conditions for the fulfillment of the MFCQ for problem \eqref{eq:tSLR}.

\begin{theorem}\label{lem:MFCQ-tSLR} The MFCQ is satisfied at a feasible point $\zeta:=(x, y, z, u,  v, w)$ of  \eqref{eq:tSLR} if the upper-level (resp. lower-level) regularity holds at $x$ (resp. $(x, z)$) and the matrix $\nabla^2_{yy}\mathcal{L}_{p_D}(\zeta)$ is full rank. 
\end{theorem}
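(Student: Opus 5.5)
We verify the dual form \eqref{eq:MFCQ} directly for the constraint system of \eqref{eq:tSLR}, with $X$ given by \eqref{eq:X-def}. The constraints are $G(x)\le 0$, $H(x)=0$, $g(x,z)\le 0$, $h(x,z)=0$, $-u\le 0$, $-w\le 0$, and the $m$ equations $\Phi(\zeta):=\nabla_y\mathcal{L}_{p_D}(\zeta)=0$. Take multipliers $\alpha^G\ge0$, $\beta^H$, $\alpha^g\ge0$, $\beta^h$, $\alpha^u\ge0$, $\alpha^w\ge0$ (each complementary to its constraint) and $\beta^\Phi\in\mathbb{R}^m$. Suppose the corresponding combination of gradients vanishes. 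The plan is to read this equation block by block in the variables $(x,y,z,u,v,w)$, peeling off one group of multipliers at a time.

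The key block is the $y$-component. Only $\Phi$ depends on $y$, because $G,H$ depend on $x$ alone and $g(x,z),h(x,z)$ involve $z$ rather than $y$. Hence the $y$-row reads $\nabla_y\Phi(\zeta)^\top\beta^\Phi=\nabla^2_{yy}\mathcal{L}_{p_D}(\zeta)\beta^\Phi=0$, using the symmetry of the Hessian. Since $\nabla^2_{yy}\mathcal{L}_{p_D}(\zeta)$ is nonsingular (full rank, square), this gives $\beta^\Phi=0$.

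With $\beta^\Phi=0$ we turn to the $u$-row, which reduces to $-\alpha^u=0$, and the $w$-row, which reduces to $-\alpha^w=0$. The $v$-row becomes trivial. The $z$-row now involves only the lower-level constraints, $\nabla_z g(x,z)^\top\alpha^g+\nabla_z h(x,z)^\top\beta^h=0$, with $\alpha^g\ge0$ complementary to $g(x,z)$. Lower-level regularity at $(x,z)$, i.e.\ MFCQ for the system describing $Y(x)$ in the variable $z$, yields $\alpha^g=0$ and $\beta^h=0$. Finally, the $x$-row collapses to $\nabla G(x)^\top\alpha^G+\nabla H(x)^\top\beta^H=0$, and upper-level regularity at $x$ gives $\alpha^G=0$ and $\beta^H=0$. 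So all multipliers vanish and MFCQ holds.

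The main point needing care is the order of elimination. The $x$- and $z$-rows initially contain the cross terms $\nabla_x\Phi^\top\beta^\Phi$ and $\nabla_z\Phi^\top\beta^\Phi$; the $z$-derivative vanishes anyway, since $\nabla_y\mathcal{L}_{p_D}$ does not depend on $z$ by \eqref{eq:LminLgrad}. Likewise the $u$- and $w$-rows contain $-\nabla_y g\,\beta^\Phi$ and $-\nabla_y f^\top\beta^\Phi$. Killing $\beta^\Phi$ first through the $y$-row removes all of these coupling terms, and after that the remaining rows decouple exactly as described.

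We also need the precise definition of lower-level regularity: MFCQ in the $z$-variable for $g(x,\cdot)\le0$, $h(x,\cdot)=0$ at $z$. That is exactly the form used in the $z$-row above.
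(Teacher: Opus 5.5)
Your proposal is correct and follows essentially the same argument as the paper. Both write the vanishing gradient combination row by row in $(x,y,z,u,v,w)$, and both use nonsingularity of $\nabla^2_{yy}\mathcal{L}_{p_D}(\zeta)$ on the $y$-row to eliminate the multiplier of $\nabla_y\mathcal{L}_{p_D}(\zeta)=0$; the $u$- and $w$-rows then give $\alpha_u=\alpha_w=0$, and lower- and upper-level MFCQ settle the $z$- and $x$-rows. The order of elimination differs slightly from the paper's, but this is immaterial because, as you note, $\nabla_y\mathcal{L}_{p_D}$ does not depend on $z$.
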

\begin{proof}
We start with the notation 
\begin{equation}\label{eq:alpha_beta_etc}
    \begin{array}{c}
\alpha:=\left[\begin{array}{l}
\alpha_G\\
\alpha_g\\
\alpha_u\\
\alpha_w
\end{array}
\right], \;
\beta:=\left[\begin{array}{l}
\beta_H\\
\beta_h\\
\beta_{\mathcal{L}_{p_D}}
\end{array}
\right],\;\;
\tilde{G}(\zeta):=\left[\begin{array}{l}
G(x)\\
g(x, z)\\
-u\\
-w
\end{array}
\right], \;\;
\tilde{H}(\zeta):=\left[\begin{array}{l}
H(x)\\
h(x, z)\\
\nabla_y\mathcal{L}_{p_D}(\zeta)
\end{array}
\right].
\end{array}
\end{equation}
Then, we can easily check that the condition $\nabla_\zeta \tilde{G}(\zeta)^\top \alpha + \nabla_\zeta \tilde{H}(\zeta)^\top \beta =0$ is equivalent to the following system of equations:
\begin{eqnarray}
  \nabla G(x)^\top \alpha_G + \nabla H(x)^\top \beta_H + \nabla_x g(x, z)^\top \alpha_g + \nabla_x h(x, z)^\top \beta_h  \qquad \quad \nonumber\\[2ex]
  +\;\, \nabla^2_{xy}\mathcal{L}_{p_D}(\zeta)^\top \beta_{\mathcal{L}_{p_D}}=0,\label{MFCQ-1}\\[2ex]
  \nabla_z g(x, z)^\top \alpha_g + \nabla_z h(x, z)^\top \beta_h =0,\label{MFCQ-2}\\[2ex]
  \nabla^2_{yy}\mathcal{L}_{p_D}(\zeta)^\top \beta_{\mathcal{L}_{p_D}}=0, \qquad \nabla^2_{vy}\mathcal{L}_{p_D}(\zeta)^\top \beta_{\mathcal{L}_{p_D}}=0,\label{MFCQ-3}\\[2ex]
  \nabla^2_{uy}\mathcal{L}_{p_D}(\zeta)^\top \beta_{\mathcal{L}_{p_D}}=\alpha_u, \qquad \nabla^2_{wy}\mathcal{L}_{p_D}(\zeta)^\top \beta_{\mathcal{L}_{p_D}}=\alpha_w.\label{MFCQ-4}
\end{eqnarray}
With this system, if we formally write the dual form of the MFCQ from Section~\ref{sec:CQs} for  \eqref{eq:tSLR} at  $\zeta:=(x, y, z,  u,  v,  w)$, we can easily check that from the lower-level regularity at $(x, z)$, condition \eqref{MFCQ-2} will imply that $\alpha_g =0$ and $\beta_h =0$, while the first equation in \eqref{MFCQ-3} will lead to $\beta_{\mathcal{L}_{p_D}}=0$ under the full rank condition imposed in the statement of the lemma. The latter will also imply, considering \eqref{MFCQ-4}, that we have $\alpha_u=0$ and $\alpha_w=0$. Finally, with $\alpha_g =0$, $\beta_h =0$, and $\beta_{\mathcal{L}_{p_D}}=0$, it will follow from \eqref{MFCQ-1} that $\alpha_G=0$ and $\alpha_H=0$ under the fulfillment of the upper-level regularity at $x$.    \qed
\end{proof}

Similarly, we next provide sufficient conditions for the fulfillment of the LICQ for \eqref{eq:tSLR}. The\textit{ upper-level (resp. lower-level) LICQ} will be said to hold at $x$ (resp. $(x, z)$) if LICQ holds at this point for the constraint system defining the upper-level (resp. lower-level) feasible  $X$ (resp.  $Y(x)$).
\begin{theorem}\label{lem:LICQ-tSLR} 
The LICQ is satisfied at a feasible point $\zeta:=(x, y,  z,  u,  v,  w)$ of   \eqref{eq:tSLR} if the upper-level (resp. lower-level) LICQ holds at $x$ (resp. $(x, z)$) and $\nabla^2_{yy}\mathcal{L}_{p_D}(\zeta)$ is full rank. 
\end{theorem}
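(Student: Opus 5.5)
The plan is to mirror the proof of Theorem~\ref{lem:MFCQ-tSLR}, using the notation $\alpha$, $\beta$, $\tilde G$, $\tilde H$ from \eqref{eq:alpha_beta_etc}. The only difference is that now $\alpha$ is restricted to the active indices and carries no sign condition. So we take multipliers $\alpha=(\alpha_G,\alpha_g,\alpha_u,\alpha_w)$, supported only on the indices of $G(x)$, $g(x,z)$, $-u$ and $-w$ that are active at $\zeta$, together with arbitrary $\beta=(\beta_H,\beta_h,\beta_{\mathcal{L}_{p_D}})$. We suppose
\[
\nabla_\zeta \tilde G_{\mathcal I}(\zeta)^\top\alpha_{\mathcal I}+\nabla_\zeta\tilde H(\zeta)^\top\beta=0
\]
and aim to show that all multipliers vanish. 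As before, this equation splits by blocks of $\zeta$ into the system \eqref{MFCQ-1}--\eqref{MFCQ-4}. In \eqref{MFCQ-4}, $\alpha_u$ and $\alpha_w$ now have nonzero components only at indices with $u_i=0$ or $w=0$, and the components at the other indices are zero.

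The order of deductions is the same as before. First, the $z$-block \eqref{MFCQ-2} reads $\nabla_z g_{I}(x,z)^\top\alpha_g+\nabla_z h(x,z)^\top\beta_h=0$, where $I$ is the set of active lower-level constraints at $(x,z)$. The lower-level LICQ at $(x,z)$ then gives $\alpha_g=0$ and $\beta_h=0$ directly, with no sign information needed.

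Next, the $y$-block, the first equation of \eqref{MFCQ-3}, is $\nabla^2_{yy}\mathcal{L}_{p_D}(\zeta)\beta_{\mathcal{L}_{p_D}}=0$. The Hessian is symmetric, so the transpose is harmless, and it has full rank, so $\beta_{\mathcal{L}_{p_D}}=0$.

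Then \eqref{MFCQ-4} gives $\alpha_u=0$ and $\alpha_w=0$, because they equal linear images of $\beta_{\mathcal{L}_{p_D}}=0$. The second equation of \eqref{MFCQ-3} holds trivially.

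Finally, substituting these zeros into \eqref{MFCQ-1} leaves $\nabla G_{\mathcal I_G}(x)^\top\alpha_G+\nabla H(x)^\top\beta_H=0$, and the upper-level LICQ at $x$ yields $\alpha_G=0$ and $\beta_H=0$.

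The one step needing care is the bookkeeping in \eqref{MFCQ-4}. There we must check that $\alpha_u$ and $\alpha_w$ enter only through the gradients of the active bound constraints $-u_i\le 0$ and $-w\le 0$. These gradients are unit vectors in the $u$ and $w$ coordinates, so each equation in \eqref{MFCQ-4} expresses $\alpha_u$ or $\alpha_w$ (padded by zeros at inactive indices) as $\nabla^2_{uy}\mathcal{L}_{p_D}(\zeta)\beta_{\mathcal{L}_{p_D}}$ or $\nabla^2_{wy}\mathcal{L}_{p_D}(\zeta)\beta_{\mathcal{L}_{p_D}}$.

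I do not expect a genuine obstacle. The argument is the proof of Theorem~\ref{lem:MFCQ-tSLR} with every appeal to MFCQ replaced by the corresponding LICQ, and it never uses nonnegativity of $\alpha$. Since LICQ implies MFCQ, the result is also consistent with Theorem~\ref{lem:MFCQ-tSLR}.
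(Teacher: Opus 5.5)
Your proposal is correct and follows the paper's own route; the paper's proof just says the argument follows the lines of Theorem~\ref{lem:MFCQ-tSLR}. As there, you split the gradient equation into \eqref{MFCQ-1}--\eqref{MFCQ-4} and obtain in turn $(\alpha_g,\beta_h)=0$ from the lower-level LICQ, $\beta_{\mathcal{L}_{p_D}}=0$ from the full-rank Hessian, $(\alpha_u,\alpha_w)=0$ from \eqref{MFCQ-4}, and $(\alpha_G,\beta_H)=0$ from the upper-level LICQ, while your handling of the active indices of $-u\le 0$, $-w\le 0$ and of the missing sign conditions spells out what ``along the same line'' leaves implicit.
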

\begin{proof}
Follows along the same line as in the proof of Theorem \ref{lem:MFCQ-tSLR}. \qed
\end{proof}

Next, we provide a framework for the fulfillment of the full rank condition in the result above. 
\begin{proposition}\label{Prop:sufficient_condition_full_rank} %
If $\zeta:=(x, y,   u,  v,  w)$ is feasible for \eqref{eq:tSLR}, then $\nabla^2_{yy}\mathcal{L}_{p_D}(\zeta)$ is full rank, provided that one of the following assumptions is satisfied:
\begin{itemize}
    \item[(a)] $\nabla^2_{yy}{F}(x, y) \prec 0$;
    \item[(b)] $\nabla^2_{yy}{f}(x, y) \succ 0$ and $w>0$;
    \item[(c)] $\nabla^2_{yy}{g}_i(x, y) \succ 0$ and $u_i>0$ for some $i=1, \ldots, p$.
\end{itemize}
\end{proposition}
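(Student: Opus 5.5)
The plan is to write down the Hessian of $\mathcal{L}_{p_D}$ with respect to $y$ and show it is negative definite under each of the three hypotheses. A negative definite symmetric matrix is nonsingular, which gives full rank. Differentiating the expression for $\nabla_y\mathcal{L}_{p_D}$ in \eqref{eq:LminLgrad} once more in $y$, and noting that $z$ enters only through the term $w f(x,z)$, which does not depend on $y$, I get
\[
\nabla^2_{yy}\mathcal{L}_{p_D}(\zeta)=\nabla^2_{yy}F(x,y)-\sum_{i=1}^p u_i\nabla^2_{yy}g_i(x,y)-\sum_{j=1}^q v_j\nabla^2_{yy}h_j(x,y)-w\nabla^2_{yy}f(x,y).
\]
By Assumption~\ref{Assumption1}(4), $h(x,\cdot)$ is affine, so the $v$-terms vanish. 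The Hessian is therefore a sum of the four blocks $\nabla^2_{yy}F$, $-u_i\nabla^2_{yy}g_i$ for each $i$, and $-w\nabla^2_{yy}f$.

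Next I check the sign of each block, using feasibility of $\zeta$ for \eqref{eq:tSLR}, which gives $u\ge 0$ and $w\ge 0$. By Assumption~\ref{Assumption1}(1), $F(x,\cdot)$ is concave, so $\nabla^2_{yy}F(x,y)\preceq 0$. By Assumption~\ref{Assumption1}(2)--(3), $\nabla^2_{yy}f(x,y)\succeq 0$ and $\nabla^2_{yy}g_i(x,y)\succeq 0$. Together with $u_i\ge 0$ and $w\ge 0$, every block is negative semidefinite. For $x\in X$ this follows because the convexity assumptions hold there; feasibility includes $x\in X$, and the twice continuous differentiability of the data makes the Hessians well defined.

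For each case, I then show that one block is strictly negative definite. Under (a) this is $\nabla^2_{yy}F(x,y)\prec 0$ directly. Under (b) it is $-w\nabla^2_{yy}f(x,y)\prec 0$, since $w>0$ and $\nabla^2_{yy}f(x,y)\succ 0$. Under (c) it is $-u_i\nabla^2_{yy}g_i(x,y)\prec 0$ for the given index $i$. A sum of negative semidefinite matrices in which one summand is negative definite is negative definite: for $d\ne 0$ the quadratic form of the sum is at most that of the definite summand, which is $<0$. So $d^\top\nabla^2_{yy}\mathcal{L}_{p_D}(\zeta)d<0$ for all $d\neq0$. Hence $\nabla^2_{yy}\mathcal{L}_{p_D}(\zeta)d=0$ forces $d=0$, and the matrix has full rank.

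No step is a real obstacle. The only point needing care is that the semidefiniteness of the remaining blocks uses the convexity and concavity assumptions at the specific $x\in X$, together with the sign constraints on $u$ and $w$ from feasibility in \eqref{eq:tSLR}. It also uses the affinity of $h$, which removes the sign-indefinite $v$-term.
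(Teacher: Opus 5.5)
Your proof is correct and follows the paper's route. You expand $\nabla^2_{yy}\mathcal{L}_{p_D}(\zeta)$, use Assumption~\ref{Assumption1}(1)--(4) together with $u\geq 0$ and $w\geq 0$ to make every term negative semidefinite, and get negative definiteness, hence full rank, from the strictly definite summand in each of (a)--(c); you are in fact more explicit than the paper, which silently drops the $v$-terms via affinity of $h(x,\cdot)$ and does not justify why a negative semidefinite sum with one negative definite summand is negative definite.
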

\begin{proof}
    Start by observing that based on Assumption \ref{Assumption1}(1)--(4), it holds that 
    \begin{equation}\label{eq:full_rank_fulfillment}
       \nabla^2_{yy}\mathcal{L}_{p_D}(\zeta) = -\left(-\nabla^2_{yy}{F}(x, y) + w \nabla^2_{yy}{f}(x, y) +\sum^{p}_{i=1} u_i \nabla^2_{yy}{g}_i(x, y)  \right)\preceq 0
    \end{equation}
given that $w\geq 0$ and $u_i\geq 0$ for $i=1, \ldots, p$ (thanks to the feasibility of $\zeta:=(x, y,   u,  v,  w)$) and the fact that the functions $F$, $f$, and $g$ are twice continuously differentiable w.r.t. $y$. Therefore, if assumption (a), (b), or (c) of the statement holds, then we have from \eqref{eq:full_rank_fulfillment} that 
$
\nabla^2_{yy}\mathcal{L}_{p_D}(\zeta) \prec 0,
$
ensuring that the latter matrix is full rank. \qed
\end{proof}
As for the upper- and lower-level regularity and LICQ, we can easily construct examples of functions describing the set $X$ \eqref{eq:X-def} and the set-valued mapping $Y$ \eqref{eq:Y(x)}, under the framework of Proposition \ref{Prop:sufficient_condition_full_rank}, such that they are satisfied.
 This is one of the strengths of reformulation \eqref{eq:tSLR}, as none of the SLRs of \eqref{eq:Pessimistic-Problem} introduced in Section \ref{sec:Existing reformulations and numerical algorithms} can permit the fulfillment of the MFCQ or LICQ. This is neither possible for the optimistic bilevel program, as  widely documented in the literature \cite{dempe2002foundations,dempe2020bilevel}. %

\subsection{Necessary  optimality conditions}\label{sec:Necessary and sufficient optimality conditions}
We start here by establishing the first order necessary optimality conditions of problem \eqref{eq:tSLR}. 
\begin{theorem}\label{the:tSLR_KKT}
    Let $(x, y, z,  u, v,  w)$ be a local optimal solution of problem \eqref{eq:tSLR}, where all the assumptions of Theorem \ref{lem:MFCQ-tSLR} are satisfied. Then, there exist Lagrange multipliers $\alpha_G$, $\alpha_g$, 
    $\beta_H$, $\beta_h$, and $\beta_{\mathcal{L}_{p_D}}$ such that the following conditions are satisfied: 
    \begin{eqnarray}
  \nabla G(x)^\top \alpha_G + \nabla H(x)^\top \beta_H + \nabla_x g(x, z)^\top \alpha_g + \nabla_x h(x, z)^\top \beta_h\; \qquad \quad \nonumber\\[2ex]
  +\;\,\nabla_{x}\mathcal{L}_{p_D}(\zeta) \;+\; \nabla^2_{xy}\mathcal{L}_{p_D}(\zeta)^\top \beta_{\mathcal{L}_{p_D}}=0,\label{KKT-U-1}\\[2ex]
   \alpha_G \geq 0, \quad G(x)\leq 0, \quad \alpha^\top_G G(x)=0,\label{KKT-U-2}\\[2ex]
   H(x)=0,\label{KKT-U-3}\\[2ex]
  {\nabla^2_{yy}\mathcal{L}_{p_D}(\zeta)^\top \beta_{\mathcal{L}_{p_D}}=0}, \;\;
  {\nabla_{y}\mathcal{L}_{p_D}(\zeta)=0},\;\;
   {h(x, y) + \nabla_y h(x,y)\beta_{\mathcal{L}_{p_D}}=0},\label{KKT-I-1}\\[2ex]
   {u \geq 0, \;\; g(x, y) + \nabla_y g(x,y)\beta_{\mathcal{L}_{p_D}}\leq 0, \;\; u^\top \left(g(x, y) + \nabla_y g(x,y)\beta_{\mathcal{L}_{p_D}}\right)=0},\label{KKT-I-2}\\[2ex]
    {w \geq 0, \;\; f(x, y) - f(x,z) + \nabla_y f(x, y)^\top\beta_{\mathcal{L}_{p_D}}\leq 0},\; \qquad\qquad\qquad\qquad\quad\quad \;\;\,\nonumber\\[2ex]
     {w \left(f(x, y) - f(x,z) + \nabla_y f(x, y)^\top\beta_{\mathcal{L}_{p_D}}\right)=0},\label{KKT-I-3}\\[2ex]
 {w\nabla_z f(x, z) + \nabla_z g(x, z)^\top \alpha_g + \nabla_z h(x, z)^\top \beta_h =0},\label{KKT-L-1}\\[2ex]
    {\alpha_g \geq 0, \quad g(x, z)\leq 0, \quad \alpha^\top_g g(x, z)=0},\label{KKT-L-2}\\[2ex]
    {h(x, z)=0}. \label{KKT-L-3}%
\end{eqnarray}
\end{theorem}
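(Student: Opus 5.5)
By Theorem~\ref{lem:MFCQ-tSLR}, the MFCQ holds at the local optimal solution $\zeta=(x,y,z,u,v,w)$ of \eqref{eq:tSLR}, and MFCQ implies GCQ. So the classical KKT conditions \eqref{OptCond1}--\eqref{OptCond3} hold for \eqref{eq:tSLR}. I will write them with the constraint functions $\tilde G,\tilde H$ and multipliers $\alpha=(\alpha_G,\alpha_g,\alpha_u,\alpha_w)$, $\beta=(\beta_H,\beta_h,\beta_{\mathcal{L}_{p_D}})$ from \eqref{eq:alpha_beta_etc}. To match the sign pattern in the statement, I take the Lagrangian as $\mathcal{L}_{p_D}(\zeta)+\alpha^\top\tilde G(\zeta)+\beta^\top\tilde H(\zeta)$. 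The sign of $\beta_{\mathcal{L}_{p_D}}$ is free, so it can be flipped if needed to get exactly the displayed forms. The plan is then to expand the stationarity condition block by block in the variables $x,y,z,u,v,w$, reusing the derivative computations behind \eqref{MFCQ-1}--\eqref{MFCQ-4} and adding the objective gradient.

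\textbf{$x$- and $z$-blocks.} Differentiating with respect to $x$ gives \eqref{KKT-U-1} directly. Together with feasibility $x\in X$ and complementarity for $G$, this gives \eqref{KKT-U-2}--\eqref{KKT-U-3}. For $z$, note that $z$ enters the objective only through $+w f(x,z)$, so $\nabla_z\mathcal{L}_{p_D}=w\nabla_z f(x,z)$. The $z$-block therefore gives \eqref{KKT-L-1}, and feasibility and complementarity for $g(x,z)\le 0$ and $h(x,z)=0$ give \eqref{KKT-L-2}--\eqref{KKT-L-3}.

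\textbf{$y$-block.} The objective contributes $\nabla_y\mathcal{L}_{p_D}(\zeta)$, which vanishes by feasibility. The constraint $\nabla_y\mathcal{L}_{p_D}=0$ contributes $\nabla^2_{yy}\mathcal{L}_{p_D}(\zeta)^\top\beta_{\mathcal{L}_{p_D}}$. The first two conditions of \eqref{KKT-I-1} follow.

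\textbf{$u$-, $v$-, $w$-blocks.} These carry the main bookkeeping. Since $\mathcal{L}_{p_D}$ is affine in $(u,v,w)$, we have
\[
\nabla_u\mathcal{L}_{p_D}=-g(x,y),\quad \nabla_v\mathcal{L}_{p_D}=-h(x,y),\quad \nabla_w\mathcal{L}_{p_D}=-(f(x,y)-f(x,z)),
\]
and the mixed second derivatives are $\nabla^2_{uy}\mathcal{L}_{p_D}=-\nabla_y g$, $\nabla^2_{vy}\mathcal{L}_{p_D}=-\nabla_y h$, $\nabla^2_{wy}\mathcal{L}_{p_D}=-\nabla_y f$.

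For $v$, which has no sign constraint, stationarity gives $h(x,y)+\nabla_y h(x,y)\beta_{\mathcal{L}_{p_D}}=0$. This is the third condition of \eqref{KKT-I-1}.

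For $u$, stationarity reads $-g(x,y)-\nabla_y g(x,y)\beta_{\mathcal{L}_{p_D}}-\alpha_u=0$, with $\alpha_u\ge 0$ and $\alpha_u^\top u=0$ coming from the constraint $-u\le 0$. Eliminating $\alpha_u=-(g(x,y)+\nabla_y g(x,y)\beta_{\mathcal{L}_{p_D}})$ yields exactly the complementarity system \eqref{KKT-I-2}.

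For $w$, the same elimination of $\alpha_w$ yields \eqref{KKT-I-3}.

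The only real obstacle is keeping consistent signs and transposes for $\beta_{\mathcal{L}_{p_D}}$ across all blocks, so that the stated forms come out exactly. Everything else is direct differentiation.
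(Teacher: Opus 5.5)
Your proposal is correct and takes the same route as the paper. MFCQ from Theorem~\ref{lem:MFCQ-tSLR} gives the classical Lagrange multiplier rule for \eqref{eq:tSLR}. The stated system then follows by expanding stationarity block by block and eliminating $\alpha_u=-(g(x,y)+\nabla_y g(x,y)\beta_{\mathcal{L}_{p_D}})$ and $\alpha_w=-(f(x,y)-f(x,z)+\nabla_y f(x,y)^\top\beta_{\mathcal{L}_{p_D}})$, which is exactly the paper's argument.

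With the Lagrangian $\mathcal{L}_{p_D}+\alpha^\top\tilde G+\beta^\top\tilde H$ the signs already match the displayed form, so no flip of $\beta_{\mathcal{L}_{p_D}}$ is needed. The $z$-block carries no $\beta_{\mathcal{L}_{p_D}}$-term because $\nabla_y\mathcal{L}_{p_D}$ does not depend on $z$.
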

\begin{proof}
It follows straightforwardly from the application of the classical Lagrange multiplier rule to problem  \eqref{eq:tSLR}, while taking into account the fact that under the assumptions of Theorem \ref{lem:MFCQ-tSLR}, the MFCQ holds at  $(\bar x, \bar y, \bar z, \bar u, \bar v, \bar w)$, as a feasible point of  \eqref{eq:tSLR}, and also observing that 
\begin{equation}\label{eq:alpha_u_alpha_w}
\alpha_u:=-\left(g(x, y) + \nabla_y g(x,y)\beta_{\mathcal{L}_{p_D}}\right) \mbox{ and } \alpha_w:= -\left(f(x, y) - f(x,z) + \nabla_y f(x, y)^\top\beta_{\mathcal{L}_{p_D}}\right),
\end{equation}
respectively, based on the corresponding definitions in \eqref{eq:alpha_beta_etc}. \qed
\end{proof}

Note that the system \eqref{KKT-U-1}--\eqref{KKT-L-3} corresponds to the KKT conditions of problem \eqref{eq:tSLR}. 
\begin{remark}\label{rem:Explanability_KKT_conditions}
   Based on Assumption \ref{Assumption1}(2)-(4), with $w\geq 0$, the existence of the Lagrange multipliers $\alpha_g$ and $\beta_h$ such that the block \eqref{KKT-L-1}--\eqref{KKT-L-3} of this system holds is equivalent to the inclusion $z\in S^*_L(x,  w)$. Therefore, in some sense, this represents the lower-level problem in the KKT conditions \eqref{KKT-U-1}--\eqref{KKT-L-3}. As for the block \eqref{KKT-I-1}--\eqref{KKT-I-3}, it corresponds to necessary conditions for $(y, u, v, w)\in S^L_{p_D}(x)$; hence, meaning that this part of the KKT conditions of problem \eqref{eq:tSLR} represents the Wolfe dual of the intermediate problem \eqref{eq:IL}. In the same vein, the block \eqref{KKT-U-1}--\eqref{KKT-U-3} can be viewed as the part of these optimality conditions representing the upper-level problem described in  \eqref{eq:Pessimistic-Problem} or \eqref{eq:P_pD}. 
\end{remark}

\begin{corollary}\label{cor:KKT_conditions_Pp}
Let $x$ be a local optimal solution of problem \eqref{eq:Pessimistic-Problem}, and assume that there exist points $(y, u, v, w)\in S^L_{p_D}(x)$ and $z\in S^*_L(x,  w)$ such that the upper-level regularity (resp. lower-level regularity) holds at $x$ (resp. $(x, z)$) and $\nabla^2_{yy}\mathcal{L}_{p_D}(x, y, z, u, v,  w)$ is full rank. Then, there exist Lagrange multipliers $\alpha_G$, $\alpha_g$, %
$\beta_H$, $\beta_h$, and $\beta_{\mathcal{L}_{p_D}}$ such that the KKT conditions \eqref{KKT-U-1}--\eqref{KKT-L-3} are satisfied. 
\end{corollary}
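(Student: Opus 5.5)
The plan is to chain Corollary~\ref{cor:summary_relationships}(b) with Theorem~\ref{the:tSLR_KKT}. No new analysis is needed; the work lies in checking that the point built from the data in the statement satisfies the hypotheses of both results.

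First, I take $x$ locally optimal for \eqref{eq:Pessimistic-Problem} and the given $(y,u,v,w)\in S^L_{p_D}(x)$ and $z\in S^*_L(x,w)$. The first implication of Corollary~\ref{cor:summary_relationships}(b) then shows that $\zeta=(x,y,z,u,v,w)$ is a local optimal solution of \eqref{eq:tSLR}. This direction needs no inner semicontinuity, only the choices $(a_1)$ and $(a_2)$, which the statement provides.

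Second, I verify feasibility of $\zeta$ for \eqref{eq:tSLR}:
\begin{itemize}
\item $x\in X$ holds by assumption.
\item $u\geq 0$, $w\geq 0$ and $\nabla_y\mathcal{L}_{p_D}(\zeta)=0$ follow from $(y,u,v,w)\in\Lambda_{p_D}(x)$ together with \eqref{eq:LminLgrad}.
\item $g(x,z)\leq 0$ and $h(x,z)=0$ follow from $z\in S^*_L(x,w)\subset Y(x)$.
\end{itemize}
Together with upper-level regularity at $x$, lower-level regularity at $(x,z)$ and the full rank of $\nabla^2_{yy}\mathcal{L}_{p_D}(\zeta)$, all hypotheses of Theorem~\ref{lem:MFCQ-tSLR} hold at $\zeta$, so the MFCQ holds there.

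Third, Theorem~\ref{the:tSLR_KKT} applies to the local minimizer $\zeta$ of \eqref{eq:tSLR}. It yields multipliers $\alpha_G,\alpha_g,\beta_H,\beta_h,\beta_{\mathcal{L}_{p_D}}$ satisfying \eqref{KKT-U-1}--\eqref{KKT-L-3}.

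The only point needing care is the first step. The local optimality transfer through the chain $(P_p)\to(P_{p_D})\to(\mathrm{pSLR})\to(\mathrm{tSLR})$ must be used in its unconditional direction, and the equivalence $(P_p)\Leftrightarrow(P_{p_D})$ via Lemma~\ref{lem:phi_p=phi_pD} must be invoked correctly. Since $\varphi_p=\varphi_{p_D}$ on $X$, local optimality of $x$ for the two problems coincides. Lemma~\ref{thm:LocalRelationship}(a), applied twice, then gives local optimality of $\zeta$ for \eqref{eq:tSLR}.
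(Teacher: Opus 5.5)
Your proposal is correct and follows the paper's own proof. Like the paper, you get local optimality of $(x,y,z,u,v,w)$ for \eqref{eq:tSLR} from the first (unconditional) part of Corollary~\ref{cor:summary_relationships}(b), and then apply Theorem~\ref{the:tSLR_KKT} using the MFCQ from Theorem~\ref{lem:MFCQ-tSLR}. Your explicit feasibility check is redundant, since local optimality already includes feasibility, just as the paper's remarks on nonemptiness of $S^L_{p_D}(x)$ and $S^*_L(x,w)$ are redundant given that the statement assumes these points exist.
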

\begin{proof}
First note that considering the fulfillment of Assumption \ref{ass:Sp_domain}, it holds that $S^L_{p_D}(x) \neq \emptyset$, while accounting for the fulfillment of the Wolfe duality result, which in turn holds thanks to Assumption  \ref{Assumption1} (for reference, see Lemma \ref{eq:Wolfe_Dual_P} and Lemma \ref{lem:phi_p=phi_pD}). Then observe that also due to Assumption \ref{ass:Sp_domain}, it holds that $S^*_L(x, w)\neq \emptyset$. Subsequently, the overall conclusion of the result follows from a combination of the first part of Corollary \ref {cor:summary_relationships}(b) and Theorem \ref{the:tSLR_KKT}. \qed
\end{proof}
It follows from Remark \ref{rem:Explanability_KKT_conditions} that the inclusions $(y, u, v, w)\in S^L_{p_D}(x)$ and $z\in S^*_L(x,  w)$ are already represented in the KKT conditions \eqref{KKT-U-1}--\eqref{KKT-L-3}, with $z\in S^*_L(x,  w)$ equivalently via \eqref{KKT-L-1}--\eqref{KKT-L-3} and $(y, u, v, w)\in S^L_{p_D}(x)$ necessarily with the presence of \eqref{KKT-I-1}--\eqref{KKT-I-3}. Therefore, they are somewhat redundant, and do not necessarily need to be accounted for while referring to the necessary optimality conditions of problem \eqref{eq:Pessimistic-Problem} obtained here via \eqref{eq:tSLR}. 

The result in Corollary \ref{cor:KKT_conditions_Pp} represents a fundamental paradigm shift in terms of the construction of necessary optimality conditions for the pessimistic bilevel optimization problem \eqref{eq:Pessimistic-Problem}, and two main observations could made to compare it with existing ones from the literature:\\[1ex]
\textbf{(i)} The existing approaches to derive necessary optimality conditions for problem \eqref{eq:Pessimistic-Problem} are based on calculations of upper estimates for the subdifferential of $\varphi_p$ \cite{dempe2014necessary,dempe2012sensitivity,dempe2019two}. Hence, the required qualification conditions involve assumptions to ensure that this function is Lipschitz continuous near the point of interest. In particular, it is usually required that the set-valued mapping $S_p$ \eqref{eq:Two-level-Function-S_p} or its suitable transformation, depending on the context, satisfies some continuity properties such as the inner semicontinuity, which is not only a strong requirement, but also a very difficult condition to verify in practice. On the other hand, Proposition \ref{Prop:sufficient_condition_full_rank} provides a base for a large class of problems for which all the requirements of Corollary \ref{cor:KKT_conditions_Pp} are automatically satisfied. \\[1ex]
\textbf{(ii)} The existing optimality conditions for  \eqref{eq:Pessimistic-Problem} usually involve combinatorial structures such as the S-, M-, and C-type necessary optimality conditions, which are difficult to check or compute in practice. Additionally, most of the existing optimality conditions can give rise to quite large systems, given that they involve convex combinations due to the convex hull structure that intervenes by virtue of the process to compute elements from the Clarke subdifferential of $\varphi_p$. On the contrary, our necessary optimality conditions \eqref{KKT-U-1}--\eqref{KKT-L-3} are of the usual KKT-type, as they involve only complementarity conditions. Furthermore, we can easily check that \eqref{KKT-U-1}--\eqref{KKT-L-3} can be written as a 
$(n+3m+2p+2q+r+s+1)\times (n+3m+2p+2q+r+s+1)$
square system of equations. %

In preparation of the following subsection, we complement the above first order necessary optimality conditions by second order conditions. Considering the nature of the feasible set of problem \eqref{eq:tSLR}, we need the following assumption.

\begin{assumption}\label{ass:C3}
The functions $G : \mathbb{R}^n \rightarrow \mathbb{R}^r$ and $H : \mathbb{R}^n \rightarrow \mathbb{R}^s$ are twice continuously differentiable, while $F$, $f$,  $g$, and $h$ are thrice continuously differentiable.
\end{assumption}
In the subsequent results, we shall use the combined multiplier vectors $\alpha$, $\beta$ and the combined constraint functions $\tilde{G}$, $\tilde{H}$ from \eqref{eq:alpha_beta_etc}, where $\alpha_u$ and $\alpha_w$ are defined as in \eqref{eq:alpha_u_alpha_w}. The Lagrangian function of problem \eqref{eq:tSLR} thus is
\begin{equation}\label{eq:Lagrangian_tSLR}
\begin{array}{rll}
 \mathcal{L}(\zeta, \alpha, \beta) & := & \mathcal{L}_{p_D}(\zeta) + \alpha_G^\top G(x) + \alpha_g^\top g(x, z) -\alpha_u^\top u -\alpha_w w\\[2ex]
                         &    & \qquad \quad\; + \,\beta_H^\top H(x) + \beta_h^\top h(x, z) + \sum^{m}_{j=1}\beta_{\mathcal{L}_{p_D},j} \nabla_{y_j} \mathcal{L}_{p_D}(\zeta),
\end{array}
\end{equation}
and the critical cone from \eqref{eq:Critial_Cone} has the form
\begin{equation}\label{eq:Critial_Cone_tSLR}
   \mathfrak{C}(\zeta, \alpha) :=\left\{d\in \mathbb{R}^{n+2m+p+q+1}\left|~\begin{array}{ll}
    \nabla\tilde G_i(\zeta)^\top d = 0\;\; \forall i:\;\, \tilde{G}_i(\zeta)=0, \;\; \alpha_i >0\\[2ex]
    \nabla\tilde{G}_i(\zeta)^\top d\leq 0\;\; \forall i:\;\, \tilde{G}_i(\zeta)=0, \;\; \alpha_i =0\\[2ex]
     \nabla\tilde H_j(\zeta)^\top d = 0\;\; \forall j=1, \ldots, q
\end{array} \right.\right\}.  
\end{equation}

The following theorem results from an application of the standard second order necessary optimality condition under LICQ to the problem \eqref{eq:tSLR}.
\begin{theorem}\label{the:tSLR_SONC}
    Let Assumption~\ref{ass:C3} hold and let $\zeta=(x, y, z,  u, v,  w)$ be a local optimal solution of problem \eqref{eq:tSLR}, where all the assumptions of Theorem \ref{lem:LICQ-tSLR} are satisfied. Then, there exist unique multipliers $\alpha$ and $\beta$ such that the KKT conditions \eqref{KKT-U-1}--\eqref{KKT-L-3} are satisfied and such that we have 
    \begin{equation}\label{eq:SONC}
d^\top \nabla^2_{\zeta\zeta} \mathcal{L}\left(\zeta, \alpha, \beta\right)d \geq 0 \;\; \mbox{ for all }\;\; d\in \mathfrak{C}(\zeta, \alpha).
\end{equation}
\end{theorem}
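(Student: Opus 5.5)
The plan is to apply the classical second-order necessary condition under LICQ directly to \eqref{eq:tSLR}, viewed as a standard nonlinear program of the form \eqref{eq:min_standard}. The variable is $\zeta=(x,y,z,u,v,w)$, the objective is $\mathcal{L}_{p_D}$, the inequality constraints are $\tilde G(\zeta)\le 0$, and the equality constraints are $\tilde H(\zeta)=0$, as defined in \eqref{eq:alpha_beta_etc}.

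The first step is to check the smoothness requirements of that classical result: the objective and all constraint functions must be $C^2$ in $\zeta$. $G$ and $H$ are $C^2$ by Assumption~\ref{ass:C3}, and $g(x,z)$, $h(x,z)$, $-u$, $-w$ and the objective $\mathcal{L}_{p_D}$ are $C^2$ (indeed $C^3$). The only constraint that needs attention is $\nabla_y\mathcal{L}_{p_D}(\zeta)$. It involves first derivatives of $F,f,g,h$ with respect to $y$, multiplied by $u,v,w$, so its second derivatives require third derivatives of $F,f,g,h$. This is exactly why Assumption~\ref{ass:C3} asks for these functions to be thrice continuously differentiable. Under it, $\nabla_y\mathcal{L}_{p_D}$ is $C^2$, and the Lagrangian \eqref{eq:Lagrangian_tSLR} has a continuous Hessian $\nabla^2_{\zeta\zeta}\mathcal{L}$.

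The second step invokes Theorem~\ref{lem:LICQ-tSLR}. The upper-level LICQ at $x$, the lower-level LICQ at $(x,z)$ and the full rank of $\nabla^2_{yy}\mathcal{L}_{p_D}(\zeta)$ together give LICQ for \eqref{eq:tSLR} at $\zeta$. Since LICQ implies MFCQ, Theorem~\ref{the:tSLR_KKT} yields multipliers satisfying \eqref{KKT-U-1}--\eqref{KKT-L-3}. Uniqueness follows from LICQ: if two multiplier pairs $(\alpha,\beta)$ and $(\alpha',\beta')$ exist, subtracting the two stationarity equations gives a vanishing combination of the gradients of the active inequalities and the equalities. LICQ then forces the differences to vanish. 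This includes the components $\alpha_u,\alpha_w$, which are determined through \eqref{eq:alpha_u_alpha_w}.

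The third step applies the standard second-order necessary condition under LICQ (see, e.g., \cite{wright1999numerical}). At a local minimizer satisfying LICQ, the Hessian of the Lagrangian at the unique multiplier is positive semidefinite on the critical cone \eqref{eq:Critial_Cone}, which in the present notation is \eqref{eq:Critial_Cone_tSLR}. This gives \eqref{eq:SONC}. The sign convention in \eqref{eq:Lagrangian_tSLR}, with $+\alpha^\top\tilde G$ and $-\alpha_u^\top u$, $-\alpha_w w$, matches the convention of \eqref{OptCond1}, so no sign adjustment is needed.

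I expect no genuine obstacle here, since the result is a direct specialization of a textbook theorem. The only point requiring care is the bookkeeping in the first step: confirming that the multiplier $\beta_{\mathcal{L}_{p_D}}$ multiplies a constraint built from first derivatives, so that $\nabla^2_{\zeta\zeta}\mathcal{L}$ involves third derivatives of $F,f,g,h$ and Assumption~\ref{ass:C3} is exactly what makes the classical theorem applicable.
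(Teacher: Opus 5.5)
Your proposal is correct and takes the same approach as the paper, which gives no separate proof and obtains the theorem directly from the standard second-order necessary condition under LICQ applied to \eqref{eq:tSLR}, with LICQ supplied by Theorem~\ref{lem:LICQ-tSLR}. You also spell out two points the paper leaves implicit: Assumption~\ref{ass:C3} is what makes the constraint $\nabla_y\mathcal{L}_{p_D}$ twice continuously differentiable, and LICQ gives uniqueness of all multipliers, including $\alpha_u$ and $\alpha_w$.
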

Linking a local optimization solution of problem  \eqref{eq:Pessimistic-Problem} to one of \eqref{eq:tSLR} like in the proof of Corollary~\ref{cor:KKT_conditions_Pp}, we obtain the following result.
\begin{corollary}\label{cor:SONC_Pp}
Let $x$ be a local optimal solution of problem \eqref{eq:Pessimistic-Problem}, and assume that there exist points $(y, u, v, w)\in S^L_{p_D}(x)$ and $z\in S^*_L(x,  w)$ such that the upper-level LICQ (resp. lower-level LICQ) holds at $x$ (resp. $(x, z)$) and $\nabla^2_{yy}\mathcal{L}_{p_D}(x, y, z, u, v,  w)$ is full rank. Then, there exist unique Lagrange multipliers $\alpha$ and $\beta$ such that the KKT conditions \eqref{KKT-U-1}--\eqref{KKT-L-3} and the second order condition \eqref{eq:SONC} are satisfied. 
\end{corollary}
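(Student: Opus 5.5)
The plan is to transfer local optimality from \eqref{eq:Pessimistic-Problem} to \eqref{eq:tSLR} and then invoke Theorem~\ref{the:tSLR_SONC}, in the same way Corollary~\ref{cor:KKT_conditions_Pp} was derived from Theorem~\ref{the:tSLR_KKT}.

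\textbf{Step 1: transfer of local optimality.} Let $x$ be locally optimal for \eqref{eq:Pessimistic-Problem}. Fix the given $(y,u,v,w)\in S^L_{p_D}(x)$ and $z\in S^*_L(x,w)$. By the first implication of Corollary~\ref{cor:summary_relationships}(b), the point $\zeta:=(x,y,z,u,v,w)$ is a local optimal solution of \eqref{eq:tSLR}. This implication needs no inner semicontinuity, since it rests only on Lemma~\ref{lem:phi_p=phi_pD} and Lemma~\ref{thm:LocalRelationship}(a). Nonemptiness of $S^L_{p_D}(x)$ and $S^*_L(x,w)$ is guaranteed by Assumption~\ref{ass:Sp_domain} and strong Wolfe duality (Lemma~\ref{eq:Wolfe_Dual_P}), as noted in the proof of Corollary~\ref{cor:KKT_conditions_Pp}. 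Here, however, these points are in any case given by hypothesis.

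\textbf{Step 2: verification of the hypotheses of Theorem~\ref{the:tSLR_SONC}.} Assumption~\ref{ass:C3} is needed so that the Lagrangian \eqref{eq:Lagrangian_tSLR} is twice continuously differentiable. This matters because its last term contains $\nabla_y\mathcal{L}_{p_D}$, so its Hessian involves third derivatives of $F$, $f$, $g$ and $h$. I will treat Assumption~\ref{ass:C3} as implicitly in force in this corollary, exactly as in the theorem it specializes. The remaining hypotheses of Theorem~\ref{lem:LICQ-tSLR} are assumed directly:
\begin{itemize}
\item the upper-level LICQ at $x$;
\item the lower-level LICQ at $(x,z)$;
\item the full rank of $\nabla^2_{yy}\mathcal{L}_{p_D}(\zeta)$.
\end{itemize}
Together these give the LICQ for \eqref{eq:tSLR} at $\zeta$.

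\textbf{Step 3: conclusion.} Theorem~\ref{the:tSLR_SONC} now yields unique multipliers $(\alpha,\beta)$ that satisfy the KKT system \eqref{KKT-U-1}--\eqref{KKT-L-3} and the second order condition \eqref{eq:SONC} on $\mathfrak{C}(\zeta,\alpha)$. Uniqueness follows from the LICQ. The multipliers $\alpha_u$ and $\alpha_w$ are expressed through \eqref{eq:alpha_u_alpha_w}, as in Theorem~\ref{the:tSLR_KKT}.

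The only delicate point is making sure that the local transfer in Step 1 really applies for the specified pair of points rather than for some other selection. This is exactly the ``for all'' clause in Corollary~\ref{cor:summary_relationships}(b), so no additional argument should be required.
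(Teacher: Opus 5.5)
Your proposal is correct and follows the paper's route. The paper's proof simply links a local solution of \eqref{eq:Pessimistic-Problem} to one of \eqref{eq:tSLR} via the first part of Corollary~\ref{cor:summary_relationships}(b), as in Corollary~\ref{cor:KKT_conditions_Pp}, and then invokes Theorem~\ref{the:tSLR_SONC} with the LICQ supplied by Theorem~\ref{lem:LICQ-tSLR}. You are also right to flag that Assumption~\ref{ass:C3} must be tacitly in force: the corollary's statement omits it, while Theorem~\ref{the:tSLR_SONC} requires it.
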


\subsection{Sufficient optimality conditions}\label{sec:Second order sufficient conditions}
In continuous optimization, it is often important to characterize strict local optimal solutions, as done with the second order sufficient condition \eqref{eq:SOSC} for  problem \eqref{eq:min_standard}. A key question in this subsection is to know whether such a framework is applicable to our true single-level reformulation model  \eqref{eq:tSLR}, and how this could potentially help to build sufficient conditions for strict locality for problem \eqref{eq:Pessimistic-Problem}. We require Assumption~\ref{ass:C3} to hold throughout this subsection.

We start by establishing a relationship between the strict local optimality of \eqref{eq:tSLR} and \eqref{eq:Pessimistic-Problem}. 
\begin{theorem}\label{the:SOSC}%
 If $\bar\zeta:=(\bar x, \bar y, \bar z, \bar u, \bar v, \bar w)$ is a strict local optimal solution of \eqref{eq:tSLR}, then $\bar x$ is a strict local optimal solution of problem \eqref{eq:Pessimistic-Problem}, provided that the following assumptions are satisfied:
\begin{itemize}
    \item[(a)] $S^L_{p_D}$  is inner semicontinuous at $(\bar x, \bar y, \bar u, \bar v, \bar w)$;
    \item[(b)] $S^*_L$ is inner semicontinuous at  $(\bar x, \bar w, \bar z)$.
\end{itemize}
\end{theorem}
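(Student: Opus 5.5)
We argue by contradiction, following the pattern of the proof of Lemma~\ref{thm:LocalRelationship}(b) but with strict inequalities replaced by non-strict ones. Suppose $\bar x$ is not a strict local optimal solution of \eqref{eq:Pessimistic-Problem}. Then there is a sequence $x^k\in X\setminus\{\bar x\}$ with $x^k\to\bar x$ and $\varphi_p(x^k)\le\varphi_p(\bar x)$ for all $k$.

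The first step is to record the chain of values at $\bar\zeta$. Strict local optimality of $\bar\zeta$ for \eqref{eq:tSLR} is in particular local optimality. So, as in the proof of Lemma~\ref{thm:LocalRelationship}(b), we may assume $\bar z\in S^*_L(\bar x,\bar w)$ and $(\bar y,\bar u,\bar v,\bar w)\in S^L_{p_D}(\bar x)$. This is consistent with hypotheses (a) and (b), which are stated at these points of the graphs. Then \eqref{eq:LminL}, \eqref{eq:SL=SLpD} and Lemma~\ref{lem:phi_p=phi_pD} give
\[
\mathcal{L}_{p_D}(\bar\zeta)=\mathcal{L}^L_{p_D}(\bar x,\bar y,\bar u,\bar v,\bar w)=\varphi_{p_D}(\bar x)=\varphi_p(\bar x).
\]

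The second step is to lift the sequence. Hypothesis (a) gives $(y^k,u^k,v^k,w^k)\in S^L_{p_D}(x^k)$ converging to $(\bar y,\bar u,\bar v,\bar w)$. Hypothesis (b), applied along $(x^k,w^k)\to(\bar x,\bar w)$, gives $z^k\in S^*_L(x^k,w^k)$ with $z^k\to\bar z$. Set $\zeta^k:=(x^k,y^k,z^k,u^k,v^k,w^k)$. Each $\zeta^k$ is feasible for \eqref{eq:tSLR}: indeed $x^k\in X$, $z^k\in Y(x^k)$, and $(y^k,u^k,v^k,w^k)\in\Lambda_{p_D}(x^k)$. Moreover $\zeta^k\ne\bar\zeta$ because $x^k\ne\bar x$, and $\zeta^k\to\bar\zeta$. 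Using \eqref{eq:SL=SLpD}, \eqref{eq:LminL} and Lemma~\ref{lem:phi_p=phi_pD} again,
\[
\mathcal{L}_{p_D}(\zeta^k)=\mathcal{L}^L_{p_D}(x^k,y^k,u^k,v^k,w^k)=\varphi_{p_D}(x^k)=\varphi_p(x^k)\le\varphi_p(\bar x)=\mathcal{L}_{p_D}(\bar\zeta).
\]
This contradicts the strict local optimality of $\bar\zeta$.

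The main obstacle is the applicability of (b): inner semicontinuity of $S^*_L$ at $(\bar x,\bar w,\bar z)$ must be used along the joint sequence $(x^k,w^k)$, and not merely along $x^k$. The definition of inner semicontinuity applies to any sequence in the domain converging to the base point, so this works as long as $(x^k,w^k)$ lies in the domain of $S^*_L$. That holds because $w^k\ge0$ and $S^*_L(x^k,w^k)\in\{S_L(x^k),Y(x^k)\}$, both nonempty by Assumption~\ref{ass:Sp_domain}. A secondary point to check is that \eqref{eq:SL=SLpD} really identifies the minimizers in $z$ of $\mathcal{L}_{p_D}(x^k,y^k,\cdot,u^k,v^k,w^k)$ over $Y(x^k)$ with $S^*_L(x^k,w^k)$. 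This holds because $\mathcal{L}_{p_D}$ depends on $z$ only through the term $+w\,f(x,z)$.
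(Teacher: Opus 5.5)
Your proposal is correct and follows essentially the paper's proof. You negate strictness to get $x^k\to\bar x$, $x^k\neq\bar x$, with $\varphi_p(x^k)\le\varphi_p(\bar x)$; you lift this sequence via (a) and then via (b) along $(x^k,w^k)$; and you use \eqref{eq:LminL}, \eqref{eq:SL=SLpD} and Lemma~\ref{lem:phi_p=phi_pD} to obtain feasible $\zeta^k\to\bar\zeta$, $\zeta^k\neq\bar\zeta$, with $\mathcal{L}_{p_D}(\zeta^k)\le\mathcal{L}_{p_D}(\bar\zeta)$. The paper additionally invokes Corollary~\ref{cor:summary_relationships}(b) for plain local optimality, which your single contradiction argument already covers; and the memberships $(\bar y,\bar u,\bar v,\bar w)\in S^L_{p_D}(\bar x)$ and $\bar z\in S^*_L(\bar x,\bar w)$ are best justified, as the paper does, by the fact that (a) and (b) are posed at graph points, not by local optimality of $\bar\zeta$.
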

\begin{proof}
Note that based on the hypothesis, the local optimality of $\bar x$ for problem \eqref{eq:Pessimistic-Problem} follows directly by virtue of the of the second part of Corollary \ref{cor:summary_relationships}(b), thanks to assumptions (a) and (b) of this theorem. %
On the strictness, suppose, by contradiction, that $\bar x$ is not a strict local optimal solution of problem \eqref{eq:Pessimistic-Problem}. %
Then, there exists a sequence $\{x^k\}\subset X\setminus\{\bar x\}$ satisfying
$
x^k\rightarrow\bar x$ such that 
$\varphi_p(x^k)\leq\varphi_p(\bar x)$ 
for all $k$.
Subsequently, by the inner semicontinuity of $S^L_{p_D}$ at
$(\bar x,\bar y,\bar u,\bar v,\bar w)$, there exists a sequence
$
(y^k,u^k,v^k,w^k)\in S^L_{p_D}(x^k)
$
such that
$
(y^k,u^k,v^k,w^k)\rightarrow
(\bar y,\bar u,\bar v,\bar w).
$
Similarly, by the inner semicontinuity of $S^*_L$ at
$(\bar x,\bar w,\bar z)$, there exists a sequence
$
z^k\in S^*_L(x^k,w^k)
$
such that we have 
$
z^k\rightarrow\bar z. 
$
It follows from equations \eqref{eq:phi_pD} and \eqref{eq:SL=SLpD}, together with \eqref{eq:LminL}, that 
\[
\mathcal{L}^L_{p_D}(x^k,y^k,u^k,v^k,w^k)
=
\varphi_p(x^k) \;\, \mbox{ and }\;\, \mathcal{L}_{p_D}(x^k,y^k,z^k,u^k,v^k,w^k)
=
\mathcal{L}^L_{p_D}(x^k,y^k,u^k,v^k,w^k),
\]
respectively. Therefore,
$
\mathcal{L}_{p_D}(x^k,y^k,z^k,u^k,v^k,w^k)
=
\varphi_p(x^k)
\leq
\varphi_p(\bar x).
$

On the other hand, we also implicitly have from assumptions (a) and (b) of the theorem that 
$
(\bar y,\bar u,\bar v,\bar w)\in S^L_{p_D}(\bar x)
$
and
$
\bar z\in S^*_L(\bar x,\bar w), 
$ respectively, are satisfied. Hence, it follows once more from equations \eqref{eq:phi_pD} and \eqref{eq:SL=SLpD}, together with \eqref{eq:LminL}, that 
\[
\mathcal{L}_{p_D}(\bar x,\bar y,\bar z,\bar u,\bar v,\bar w)
=
\mathcal{L}^L_{p_D}(\bar x,\bar y,\bar u,\bar v,\bar w)
=
\varphi_p(\bar x).
\]
Subsequently, we get that for every $k$, 
\[
\mathcal{L}_{p_D}(x^k,y^k,z^k,u^k,v^k,w^k)
\leq
\mathcal{L}_{p_D}(\bar x,\bar y,\bar z,\bar u,\bar v,\bar w).
\]
Since
$
(x^k,y^k,z^k,u^k,v^k,w^k)
\rightarrow
(\bar x,\bar y,\bar z,\bar u,\bar v,\bar w),
$
this contradicts the strict local optimality of the point 
$(\bar x,\bar y,\bar z,\bar u,\bar v,\bar w)$ for  problem \eqref{eq:tSLR}. 
Therefore, $\bar x$ is a strict local optimal solution of  \eqref{eq:Pessimistic-Problem}. 
\qed
\end{proof}

It is important to note that assumptions (a) and (b) are not there just by default to ensure the local optimality the $x$-component of a local optimal solution of problem \eqref{eq:tSLR} for problem \eqref{eq:Pessimistic-Problem}; these assumptions are also crucial to establish the strictness of such a local optimal solution. 

We emphasize that the assumptions of Theorem~\ref{the:SOSC} cover the ones which guarantee local optimality of $\bar x$ for \eqref{eq:Pessimistic-Problem} in the second part of Corollary~\ref{cor:summary_relationships}(b). The only additional assumption is the strictness of local optimality in the corresponding problem \eqref{eq:tSLR}. In the following example, all the assumptions of Theorem \ref{the:SOSC} hold, except the strict local optimality of problem \eqref{eq:tSLR}. Nevertheless, the local optimality of problem \eqref{eq:Pessimistic-Problem} is still strict.

\begin{example}\label{exam:Strict_Fails_V2}
For the problem \eqref{eq:Pessimistic-Problem} with  %
\begin{align*}%
    F(x,y):=y, \;\; f(x,y):=x, \;\; X:=\mathbb{R}, \;\;\mbox{ and } \;\; Y(x):=[-x^2,x^2],
\end{align*}
one obtains $\varphi_p(x)=x^2$, so that $\bar x=0$ is a strict local optimal solution of problem \eqref{eq:Pessimistic-Problem}. %

Assumption \ref{ass:Sp_domain} holds as $S_p(x)=\{x^2\}$ for all $x\in X$. Additionally, Assumption~\ref{Assumption1} also holds, as we can put $g_1(x,y)=-x^2-y$, $g_2(x,y)=-x^2+y$, while accounting for the fact that  $\varphi_L(x)=x$. %
Hence, the corresponding problem \eqref{eq:tSLR} here is
\begin{align}
    \min_{x,y,z,u_1,u_2,w}\,& \mathcal{L}_{p_D}(x, y, z, u, w):= y+u_1(x^2+y)+u_2(x^2-y)\nonumber\\
    \text{s.t. }\ & -x^2-z\leq0,\ -x^2+z\leq0,\  u,w\geq0,\label{eq:example58}\\
    & 1+u_1-u_2=0.\nonumber 
\end{align}
The point $\bar\zeta=(\bar x,\bar y,\bar z,\bar u,\bar w)$, with $\bar x=\bar y=\bar z=0$, $\bar u=(0,1)^\top$, and $\bar w=1$, is a global, and therefore a local, optimal solution of this problem. In fact, for any feasible point $\zeta=(x, y, z, u, w)$ of \eqref{eq:example58}, 
\[
\mathcal{L}_{p_D}(\zeta)= (1+u_1 -u_2)y + (u_1 + u_2)x^2 = (u_1 + u_2)x^2 \geq 0= \mathcal{L}_{p_D}(\bar\zeta).
\]
However, the family of points 
$\zeta(t):=(0,t,0,0,1,1)$ for $t\in \mathbb{R}$, which is feasible for problem \eqref{eq:example58} and satisfies $\zeta(t) \rightarrow \bar\zeta$ as $t \rightarrow 0$, is such that for any $t\neq 0$, $\zeta(t) \neq \bar\zeta$ and $\mathcal{L}_{p_D}(\zeta(t)) = \mathcal{L}_{p_D}(\bar\zeta)$. Thus, $\bar\zeta$ is not a strict local optimal solution of  problem \eqref{eq:example58}.

With respect to the inner semicontinuity of $S^L_{p_D}$ at $(\bar x,\bar y,\bar u,\bar w)$, observe that 
\begin{align*}
   \Lambda_{p_D}(x)=\{(y,u,w)\mid u\geq0,\ w\geq0,\ 1+u_1-u_2=0\}
\end{align*}
and $\varphi_{p_D}(x)=\varphi_p(x)=x^2$ yield
\[
S^L_{p_D}(x)
=
\begin{cases}
\left\{
(y,0,1,w)
\;\middle|\;
y\in\mathbb R,\ w\geq0
\right\},
& \text{if }x\neq0,\\[2ex]
\left\{
(y,u_1,1+u_1,w)
\;\middle|\;
y\in\mathbb R,\ u_1\geq0,\ w\geq0
\right\},
& \text{if }x=0.
\end{cases}
\]
From $(\bar y,\bar u,\bar w)\in S^L_{p_D}(x)$ for all $x\in\R$ we obtain the required inner semicontinuity assumption.

Finally, $S^*_L$ is inner semicontinuous at $(\bar x,\bar w,\bar z)=(0,1,0)$ in view of $S^*_L(x)=Y(x)$ for all $x\in\R$ and the inner semicontinuity of $Y$ at $(0,0)$.
\qed 
\end{example}

An interesting feature of this example is the fact that $S_L(\bar x)$ is a singleton, while $S_L(x)$ is a proper interval for any $x\in X\setminus \{\bar x\}$. Moreover, as it can be seen in the graphs of $\varphi_o$ and $\varphi_p$ in Fig. \ref{fig:data_exp}, it seems quite interesting that $\bar x$ is the global optimal solution for the corresponding problem \eqref{eq:Pessimistic-Problem}, while being the worst point (from the minimization perspective) for the optimistic problem \eqref{eq:Optimistic-Problem}. %
\begin{figure}[htp]
\centering
\includegraphics[width=0.65\linewidth]{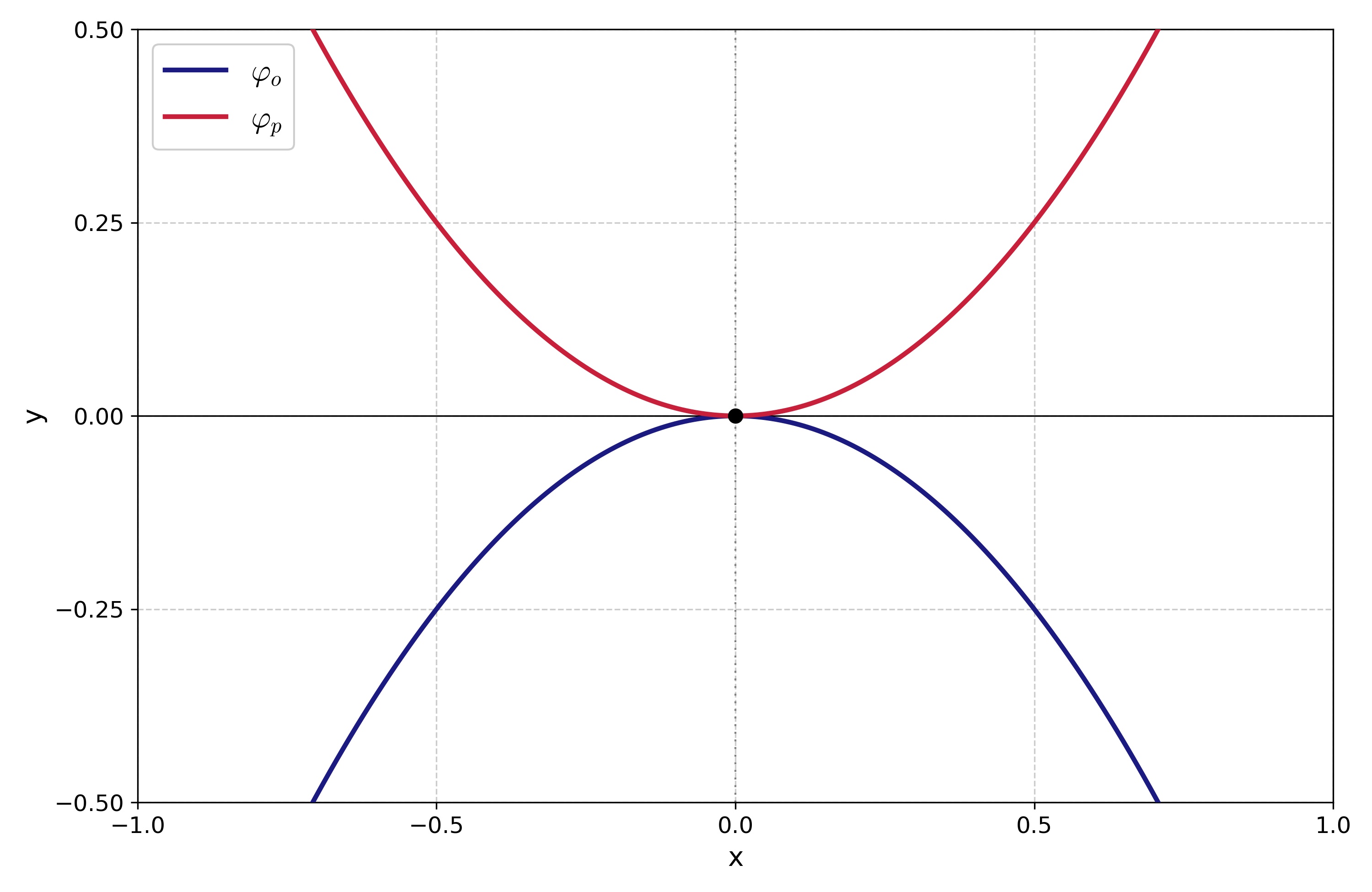}
\caption{Graphs of $\varphi_o$ from \eqref{eq:def_phi_o} and $\varphi_p$ from \eqref{eq:def_phi_p} for the problem in  Example~\ref{exam:Strict_Fails_V2}.}\label{fig:data_exp}
\end{figure}

Example \ref{exam:Strict_Fails_V2} also illustrates that the converse of the implication in Theorem \ref{the:SOSC} is not necessarily true. 
In practice, one would expect to use the SOSC of problem \eqref{eq:tSLR} as key tool to guarantee that a point is a  strict local optimal solution, in order to enable one to leverage on Theorem \ref{the:SOSC} to derive a sufficient condition for strict local optimality for problem \eqref{eq:Pessimistic-Problem}. Unfortunately, based on the discussion in Subsection \ref{sec:CQs}, the point $\bar\zeta=(\bar x,\bar y,\bar z,\bar u,\bar w)$ with $\bar x=\bar y=\bar z=0$, $\bar u=(0,1)^\top$, $\bar w=1$, which can easily be shown to be a KKT point of problem \eqref{eq:example58} with multipliers $\bar\alpha_g=(1/4, 1/4)^\top$, $\bar\alpha_u=(0,0)^\top$, $\bar\alpha_w=0$, and $\bar\beta:=\bar\beta_{\mathcal{L}_{p_D}}=0$ does not satisfy the SOSC (as $\bar\zeta$ is not a strict local optimal solution of problem \eqref{eq:example58}, according to Example \ref{exam:Strict_Fails_V2}). 
In fact, the critical cone can be obtained as 
\begin{align*}
    \mathfrak{C}(\bar\zeta,\bar\alpha)=\{(d_x,d_y,0,d_{u_1},d_{u_1}, d_w)^\top\mid d_x,d_y, d_w\in\R,\,d_{u_1}\geq0\},
\end{align*}
and the Hessian of the Lagrangian function $\mathcal{L}$ of problem \eqref{eq:example58} is 
\begin{equation*}
\nabla_{\zeta\zeta}^{2}
\mathcal{L}(\bar\zeta,\bar\alpha,\bar\beta)
=
\begin{pmatrix}
1&0&0&0&0&0\\
0&0&0&1&-1&0\\
0&0&0&0&0&0\\
0&1&0&0&0&0\\
0&-1&0&0&0&0\\
0&0&0&0&0&0
\end{pmatrix}.
\end{equation*}
This yields $d^\top \nabla^2_\zeta \mathcal{L}(\bar\zeta,\bar\alpha, \bar\beta)d=d_x^2$ for all $d\in\mathfrak{C}(\bar\zeta,\bar\alpha)\setminus\{0\}$. Now take, for example,  $d^\top_*=(0,1,0,0,0,0)$. We have $d_* \in \mathfrak{C}(\bar\zeta,\bar\alpha)\setminus\{0\}$, and obviously, $d^\top_* \nabla^2_\zeta \mathcal{L}(\bar\zeta,\bar\alpha, \bar\beta)d_*=0$. Thus, the SOSC fails at $(\bar\zeta,\bar\alpha, \bar\beta)$.

A key observation that could be made for this example is that $\bar y, \bar z\in S_L(\bar x)$ and $\Lambda_L(\bar x, \bar y) \neq \emptyset$, where $\Lambda_L(\bar x, \bar y)$ denotes the set of lower-level Lagrange multipliers, which is defined as %
\[
\Lambda_L(x, y):=\left\{(u, v)\in \mathbb{R}^p\times \mathbb{R}^q\left|
\begin{array}{l}
   \nabla_y f(x,y) + \nabla_y g(x,y)^\top u + \nabla_y h(x,y)^\top v =0\\[1ex]
     u\geq 0, \;\; g(x,y)\leq 0, \;\; u^\top g(x,y)=0\\[1ex]
     h(x,y)=0
\end{array}  \right.\right\}
\]
for any pair $(x, y)$ such that $y$ is feasible for \eqref{eq:LL}. The next result will enable us to show in Remark \ref{rem:Failure_Strict_Optimality} that \eqref{eq:SOSC} fails for the problem from Example \ref{exam:Strict_Fails_V2} because  $\bar y, \bar z\in S_L(\bar x)$ and $\Lambda_L(\bar x, \bar y) \neq \emptyset$. %
\begin{proposition}\label{prop:flat_ray}
 Let $\bar\zeta:=(\bar x, \bar y, \bar z, \bar u, \bar v, \bar w)$ be a feasible point for problem \eqref{eq:tSLR} with $\bar y, \bar z \in S_L(\bar x)$ and $\Lambda_L(\bar x, \bar y)\neq \emptyset$. Then, for every $(u, v)\in \Lambda_L(\bar x, \bar y)$, the path
 \[
 \zeta(t):=(\bar x, \bar y, \bar z, \bar u +tu, \bar v +tv, \bar w +t) \;\; \mbox{ for every }\; t\geq 0
 \]
 is feasible for problem \eqref{eq:tSLR} and satisfies $\mathcal{L}_{p_D}(\zeta(t))=\mathcal{L}_{p_D}(\bar\zeta)$ for every $t\geq 0$.
\end{proposition}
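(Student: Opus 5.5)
The argument is a direct verification: first feasibility of $\zeta(t)$ for \eqref{eq:tSLR}, then constancy of the objective. Throughout, fix $(u,v)\in\Lambda_L(\bar x,\bar y)$ and $t\ge 0$.

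\emph{Feasibility.} The $x$- and $z$-components of $\zeta(t)$ are those of $\bar\zeta$. Hence $x\in X$, $g(\bar x,\bar z)\le 0$ and $h(\bar x,\bar z)=0$ carry over from the feasibility of $\bar\zeta$. The sign constraints also hold, since $\bar u+tu\ge 0$ and $\bar w+t\ge 0$ follow from $\bar u\ge 0$, $u\ge 0$, $\bar w\ge0$, $t\ge0$. For the equality constraint, I will use that $\nabla_y\mathcal{L}_{p_D}$ is affine in the multipliers $(u,v,w)$ by \eqref{eq:LminLgrad}. This gives
\[
\nabla_y\mathcal{L}_{p_D}(\zeta(t))=\nabla_y\mathcal{L}_{p_D}(\bar\zeta)-t\left(\nabla_y f(\bar x,\bar y)+\nabla_y g(\bar x,\bar y)^\top u+\nabla_y h(\bar x,\bar y)^\top v\right).
\]
The first term vanishes by the feasibility of $\bar\zeta$, and the bracket vanishes because $(u,v)\in\Lambda_L(\bar x,\bar y)$ (lower-level stationarity).

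\emph{Objective.} The objective $\mathcal{L}_{p_D}$ is also affine in $(u,v,w)$, so
\[
\mathcal{L}_{p_D}(\zeta(t))=\mathcal{L}_{p_D}(\bar\zeta)-t\left(u^\top g(\bar x,\bar y)+v^\top h(\bar x,\bar y)+f(\bar x,\bar y)-f(\bar x,\bar z)\right).
\]
It remains to show the bracket vanishes.
\begin{itemize}
\item Complementarity in $\Lambda_L(\bar x,\bar y)$ gives $u^\top g(\bar x,\bar y)=0$.
\item The condition $h(\bar x,\bar y)=0$, also part of the definition of $\Lambda_L$, gives $v^\top h(\bar x,\bar y)=0$.
\item Both $\bar y$ and $\bar z$ lie in $S_L(\bar x)$, so $f(\bar x,\bar y)=\varphi_L(\bar x)=f(\bar x,\bar z)$.
\end{itemize}
Hence the bracket is zero and $\mathcal{L}_{p_D}(\zeta(t))=\mathcal{L}_{p_D}(\bar\zeta)$ for all $t\ge0$.

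No step is a real obstacle. The only point needing care is that primal feasibility of $\bar y$ for $LL(\bar x)$ is not implied by the tSLR constraints (cf.\ Remark~\ref{rem:dual_y_not_optimal}). It is supplied here by the hypotheses $\bar y\in S_L(\bar x)$ and $(u,v)\in\Lambda_L(\bar x,\bar y)$, and these are used exactly to kill the linear-in-$t$ terms.
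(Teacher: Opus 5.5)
Your proof is correct and follows the paper's argument. You expand the $y$-gradient equality constraint and the objective along the ray using their affine dependence on $(u,v,w)$, then kill the linear-in-$t$ terms with lower-level stationarity, complementarity, $h(\bar x,\bar y)=0$, and $f(\bar x,\bar y)=\varphi_L(\bar x)=f(\bar x,\bar z)$ — and you additionally spell out the sign constraints and the unchanged $x$- and $z$-constraints, which the paper leaves implicit.
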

\begin{proof} For any $(u, v)\in \Lambda_L(\bar x, \bar y)$, considering the fact that $\bar\zeta:=(\bar x, \bar y, \bar z, \bar u, \bar v, \bar w)$ is feasible for \eqref{eq:tSLR}, 
\begin{align*}
&\nabla_yF(\bar x,\bar y)
-\nabla_yg(\bar x,\bar y)^\top u(t)
-\nabla_yh(\bar x,\bar y)^\top v(t)
-w(t)\nabla_yf(\bar x,\bar y)\\
& \qquad \qquad \qquad \qquad =
\nabla_yF(\bar x,\bar y)
-\nabla_yg(\bar x,\bar y)^\top\bar u
-\nabla_yh(\bar x,\bar y)^\top\bar v
-\bar w\nabla_yf(\bar x,\bar y)\\
&\qquad \qquad \qquad \qquad \qquad \qquad \qquad \qquad \qquad
-t\left[
\nabla_yf(\bar x,\bar y)
+\nabla_yg(\bar x,\bar y)^\top u
+\nabla_yh(\bar x,\bar y)^\top v
\right] =0
\end{align*}
for any $t\geq 0$. Hence, $\zeta(t)$ is a feasible point for problem \eqref{eq:tSLR} for any $t\geq 0$. Moreover, 
\begin{align*}
\mathcal{L}_{p_D}(\zeta(t))-\mathcal{L}_{p_D}(\bar\zeta)
= -tu^\top g(\bar x,\bar y) - tv^\top h(\bar x,\bar y) -t\left(f(\bar x,\bar y)-f(\bar x,\bar z)\right).
\end{align*}
Since $(u, v)\in \Lambda_L(\bar x, \bar y)$, it holds that $u^\top g(\bar x,\bar y)=0$ and $h(\bar x,\bar y)=0$. Additionally, given that we have $\bar y, \bar z \in S_L(\bar x)$, it holds that $f(\bar x, \bar y)=\varphi_L(\bar x) = f(\bar x, \bar z)$. Hence, $\mathcal{L}_{p_D}(\zeta(t))-\mathcal{L}_{p_D}(\bar\zeta) =0$ for all $t\geq 0$. \qed
\end{proof}

\begin{remark}\label{rem:Failure_Strict_Optimality}
Another interpretation of this result is that for any feasible point $\zeta:=(\bar x, \bar y, \bar z, \bar u, \bar v, \bar w)$ of problem \eqref{eq:tSLR} with $\bar y, \bar z \in S_L(\bar x)$ and $\Lambda_L(\bar x, \bar y)\neq \emptyset$, there exists a nonconstant feasible path
$\zeta(\cdot):[0,\infty)\to\mathcal F_{\mathrm{tSLR}}$ with origin at $0$ (i.e., with $\zeta(0)=\bar\zeta$), where the objective function of problem \eqref{eq:tSLR} is constant; i.e., 
$
\mathcal L_{p_D}(\zeta(t))=\mathcal L_{p_D}(\bar\zeta)
$
for every $t\geq 0$. 
Note that $\mathcal F_{\mathrm{tSLR}}$ denotes the feasible set of problem \eqref{eq:tSLR}. As a consequence, no feasible point $\zeta:=(\bar x, \bar y, \bar z, \bar u, \bar v, \bar w)$ of problem \eqref{eq:tSLR} can be a strict local optimal solution if $\bar y, \bar z \in S_L(\bar x)$ and $\Lambda_L(\bar x, \bar y)\neq \emptyset$. 
\end{remark}

For the final discussion of this section, recall that the classical SOSC for  \eqref{eq:tSLR} is a sufficient condition for the fulfillment of strict local optimality for the problem, as described in \eqref{eq:SOSC}. Therefore, to potentially leverage on the implication in Theorem \ref{the:SOSC} to construct a second order sufficient optimality condition for problem \eqref{eq:Pessimistic-Problem}, it is crucial to know whether the second order sufficient condition \eqref{eq:SOSC} can hold for \eqref{eq:tSLR}. In fact, we show that \eqref{eq:SOSC} systematically fails for any KKT point of problem \eqref{eq:tSLR}. The following lemmas will be crucial for this proof.

\begin{lemma}\label{lem:SOSC-implies-negative-definiteness}
    Let $\bar\zeta:=(\bar x, \bar y, \bar z, \bar u, \bar v, \bar w)$ satisfy the KKT conditions \eqref{KKT-U-1}--\eqref{KKT-L-3} with the Lagrange multipliers $\bar\alpha$ and $\bar\beta$. If the SOSC for \eqref{eq:tSLR} holds at $(\bar\zeta,\bar\alpha,\bar\beta)$, then $\nabla_{yy}^{2}\mathcal{L}_{p_D}(\bar\zeta)$ is negative definite. 
\end{lemma}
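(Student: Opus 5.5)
We argue by contraposition. Since $\nabla^2_{yy}\mathcal{L}_{p_D}(\bar\zeta)\preceq 0$ by \eqref{eq:full_rank_fulfillment} (using Assumption~\ref{Assumption1}(1)--(4), $\bar u\geq 0$ and $\bar w\geq 0$), it is negative definite if and only if it is nonsingular. So suppose there is $d_y\in\mathbb{R}^m\setminus\{0\}$ with $\nabla^2_{yy}\mathcal{L}_{p_D}(\bar\zeta)d_y=0$. We exhibit the nonzero direction $d:=(0,d_y,0,0,0,0)$, which moves only the $y$-component, and show that $d\in\mathfrak{C}(\bar\zeta,\bar\alpha)$ and $d^\top\nabla^2_{\zeta\zeta}\mathcal{L}(\bar\zeta,\bar\alpha,\bar\beta)d=0$. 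This contradicts \eqref{eq:SOSC}.

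\textbf{Critical cone.} The inequality constraints collected in $\tilde G$ from \eqref{eq:alpha_beta_etc}, namely $G(x)$, $g(x,z)$, $-u$ and $-w$, do not depend on $y$. Hence $\nabla\tilde G_i(\bar\zeta)^\top d=0$ for every $i$, and all sign and equality requirements in \eqref{eq:Critial_Cone_tSLR} for $\tilde G$ hold. The equality constraints $H(x)$ and $h(x,z)$ are also independent of $y$. For the remaining equality constraint $\nabla_y\mathcal{L}_{p_D}(\zeta)=0$, the linearization along $d$ is $\nabla^2_{yy}\mathcal{L}_{p_D}(\bar\zeta)d_y=0$, which holds by the choice of $d_y$. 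Thus $d\in\mathfrak{C}(\bar\zeta,\bar\alpha)\setminus\{0\}$.

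\textbf{Curvature.} By \eqref{eq:Lagrangian_tSLR}, only two parts of $\mathcal{L}$ depend on $y$: the term $\mathcal{L}_{p_D}$ and the term $\sum_j\beta_{\mathcal{L}_{p_D},j}\nabla_{y_j}\mathcal{L}_{p_D}$. Therefore
\[
d^\top\nabla^2_{\zeta\zeta}\mathcal{L}\,d=d_y^\top\nabla^2_{yy}\mathcal{L}_{p_D}(\bar\zeta)d_y+\sum_{j=1}^m\bar\beta_{\mathcal{L}_{p_D},j}\,d_y^\top\nabla^2_{yy}\big(\nabla_{y_j}\mathcal{L}_{p_D}\big)(\bar\zeta)d_y .
\]
The first term vanishes because $\nabla^2_{yy}\mathcal{L}_{p_D}(\bar\zeta)d_y=0$.

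\textbf{Main obstacle.} The difficulty is the second term, which is a contraction of the third-order tensor $\nabla^3_{yyy}\mathcal{L}_{p_D}(\bar\zeta)$ with $\bar\beta_{\mathcal{L}_{p_D}}$; it needs Assumption~\ref{ass:C3}. I would first try to show it vanishes using the KKT condition \eqref{KKT-I-1}, which gives $\nabla^2_{yy}\mathcal{L}_{p_D}(\bar\zeta)\bar\beta_{\mathcal{L}_{p_D}}=0$, so $\bar\beta_{\mathcal{L}_{p_D}}$ lies in the same kernel as $d_y$. Writing $T:=\nabla^3_{yyy}\mathcal{L}_{p_D}(\bar\zeta)$, the second term equals $T[\bar\beta_{\mathcal{L}_{p_D}},d_y,d_y]$, and $T$ is symmetric.

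If this term is not zero, I would handle it with a sign trick on the direction. Both $\pm d_y$ lie in the kernel, and the term is quadratic in $d_y$, so flipping the sign of $d_y$ does not change it. Instead, consider $d_y+s\bar\beta_{\mathcal{L}_{p_D}}$, which also lies in the kernel and keeps $d$ in the critical cone. Now the curvature expression is a cubic polynomial in the kernel direction (with leading coefficient $T[\bar\beta_{\mathcal{L}_{p_D}},\cdot,\cdot]$). The plan is to choose the kernel element so that the value is $\leq 0$. Concretely, if $\bar\beta_{\mathcal{L}_{p_D}}\neq 0$, take $d_y=\bar\beta_{\mathcal{L}_{p_D}}$ and $-\bar\beta_{\mathcal{L}_{p_D}}$. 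The values are $T[\bar\beta_{\mathcal{L}_{p_D}},\bar\beta_{\mathcal{L}_{p_D}},\bar\beta_{\mathcal{L}_{p_D}}]$ and its negative, so one of them is nonpositive. If $\bar\beta_{\mathcal{L}_{p_D}}=0$, the second term vanishes trivially.

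Hence \eqref{eq:SOSC} fails in every case, which proves that SOSC forces $\nabla^2_{yy}\mathcal{L}_{p_D}(\bar\zeta)\prec 0$.
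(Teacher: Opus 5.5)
Your setup matches the paper: the kernel direction $d_y$, the critical direction $d=(0,d_y,0,0,0,0)$, and the reduction of $d^\top\nabla^2_{\zeta\zeta}\mathcal{L}(\bar\zeta,\bar\alpha,\bar\beta)d$ to the third-order term $T[\bar\beta_{\mathcal{L}_{p_D}},d_y,d_y]$. The gap is the step that disposes of that term. The multiplier $\bar\beta_{\mathcal{L}_{p_D}}$ is fixed by the KKT point. So the curvature along critical directions is the quadratic form $d_y\mapsto T[\bar\beta_{\mathcal{L}_{p_D}},d_y,d_y]$, not a cubic polynomial. In particular it is even in $d_y$, as you note yourself one sentence earlier. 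Hence $d_y=\bar\beta_{\mathcal{L}_{p_D}}$ and $d_y=-\bar\beta_{\mathcal{L}_{p_D}}$ both give $T[\bar\beta_{\mathcal{L}_{p_D}},\bar\beta_{\mathcal{L}_{p_D}},\bar\beta_{\mathcal{L}_{p_D}}]$, not opposite values. The direction $d_y+s\bar\beta_{\mathcal{L}_{p_D}}$ fails for the same reason: it gives a quadratic in $s$, which may be positive for all $s$. Negative semidefiniteness of $\nabla^2_{yy}\mathcal{L}_{p_D}$ at the single point $\bar\zeta$, together with $\nabla^2_{yy}\mathcal{L}_{p_D}(\bar\zeta)\bar\beta_{\mathcal{L}_{p_D}}=0$ from \eqref{KKT-I-1}, does not stop this form from being positive on the kernel. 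So your argument yields no contradiction with \eqref{eq:SOSC}.

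The missing idea is to use concavity of $\mathcal{L}_{p_D}(\bar x,\cdot,\bar z,\bar u,\bar v,\bar w)$ on a neighborhood of $\bar y$, not only at $\bar y$. This concavity holds on all of $\R^m$, because the KKT conditions give $\bar x\in X$, $\bar u\geq 0$, $\bar w\geq 0$, and then Assumption~\ref{Assumption1}(1)--(4) applies. Fix a kernel vector $d_y$ and set $\psi(y):=d_y^\top\nabla^2_{yy}\mathcal{L}_{p_D}(\bar x,y,\bar z,\bar u,\bar v,\bar w)d_y$. Then $\psi(y)\leq 0=\psi(\bar y)$ for all $y$. Since $\psi$ is $C^1$ under Assumption~\ref{ass:C3}, $\bar y$ is a maximizer of $\psi$, so $\nabla\psi(\bar y)=0$. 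The $j$-th component of $\nabla\psi(\bar y)$ is $d_y^\top\nabla^2_{yy}(\nabla_{y_j}\mathcal{L}_{p_D})(\bar\zeta)d_y$. Hence $T[e,d_y,d_y]=0$ for every $e\in\R^m$, in particular for $e=\bar\beta_{\mathcal{L}_{p_D}}$. The curvature along $d$ is therefore exactly $0$, which contradicts the SOSC. This argument does not need $\bar\beta_{\mathcal{L}_{p_D}}$ to lie in the kernel, so your appeal to \eqref{KKT-I-1} is unnecessary.
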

\begin{proof}Based on Assumption \ref{Assumption1}(1)--(4), it follows along the lines of the proof of Proposition \ref{Prop:sufficient_condition_full_rank}, cf.\eqref{eq:full_rank_fulfillment}, that for any $\zeta:=(x, y, z, u, v, w)$ with $u\geq$ and $w\geq 0$, we have $\nabla_{yy}^{2}\mathcal{L}_{p_D}(\zeta)\preceq 0$.

Suppose that $\nabla_{yy}^{2}\mathcal{L}_{p_D}(\bar\zeta)$ is singular. Then, there exists $d_y\neq0$ such that $\nabla_{yy}^{2}\mathcal{L}_{p_D}(\bar\zeta)d_y = 0$. Hence, we can easily check that 
\[
 d:=(0, d_y, 0, 0, 0, 0) \in    \mathfrak{C}(\bar\zeta,\bar\alpha),
\]
where $\mathfrak{C}(\bar\zeta,\bar\alpha)$ is the critical cone to the feasible set of problem \eqref{eq:tSLR} from  \eqref{eq:Critial_Cone_tSLR}. 

With the Lagrangian $\mathcal L(\zeta,\alpha,\beta)$ of \eqref{eq:tSLR} from \eqref{eq:Lagrangian_tSLR}, along the direction $d$ above, it holds that
\begin{align}\label{eq:SOSCcontradiction}
d^\top\nabla_{\zeta\zeta}^{2}
\mathcal L(\bar\zeta,\bar\alpha,\bar\beta)d
=
d_y^\top \underbrace{\nabla_{yy}^{2}\mathcal{L}_{p_D}(\bar\zeta) d_y}_{=\,0}
+
\sum_{j=1}^{m}\bar\beta_{\mathcal{L}_{p_D},j}\,
d_y^\top
\nabla_{yy}^{2}
\left(\nabla_{y_j}\mathcal{L}_{p_D}\right)(\bar\zeta)d_y.
\end{align}
We introduce the function 
\[
\psi(y):=
d_y^\top
\nabla_{yy}^{2}\mathcal{L}_{p_D}
(\bar x,y,\bar z,\bar u,\bar v,\bar w)d_y.
\]
Observe that based on Assumption \ref{Assumption1}(1)--(4), we have 
\[
\psi(y) \leq 0 = \psi(\bar y) \; \mbox{ for all }\; y \;\mbox{ near } \;\bar y
\]
given that $\nabla_{yy}^{2}\mathcal{L}_{p_D}(\bar\zeta)d_y = 0$. 
Thus, \(\bar y\) is a local maximizer of the differentiable function
\(\psi\), and consequently, we have 
$
\nabla\psi(\bar y)=0.
$
Thus, 
\[
\sum_{j=1}^{m}\bar\beta_{\mathcal{L}_{p_D},j}\,d_y^\top \nabla_{yy}^{2} \bigl(\nabla_{y_j}\mathcal{L}_{p_D}\bigr)(\bar\zeta)d_y =
\nabla\psi(\bar y)^\top\bar\beta_{\mathcal{L}_{p_D}}
=0.
\]
Therefore, \eqref{eq:SOSCcontradiction} implies $d^\top\nabla_{\zeta\zeta}^{2}
\mathcal L(\bar\zeta,\bar\alpha,\bar\beta)d
=0$. This contradicts the fulfillment of the SOSC given that 
$
d\in \mathfrak{C}(\bar\zeta,\bar\alpha)\setminus\{0\}.
$
Hence, $\nabla_{yy}^{2}\mathcal{L}_{p_D}(\bar\zeta)$ must be nonsingular. Since this matrix is negative
semidefinite, it follows that 
$\nabla_{yy}^{2}\mathcal{L}_{p_D}(\bar\zeta) \prec 0$.
\qed
\end{proof}

\begin{lemma}\label{lem:Non_Emptyness}
If $S_L(\bar x)=\{\bar y\}$ and \eqref{eq:GCQ} holds in $Z(\bar x)$ at $\bar y$, then $\Lambda_L(\bar x, \bar y) \neq \emptyset$. 
\end{lemma}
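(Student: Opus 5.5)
The plan is to read off a Farkas-type certificate from the GCQ on the degenerate set $Z(\bar x)$. The key fact is that this set collapses to the single point $\bar y$.

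\emph{Step 1.} By \eqref{eq:Zx}, $Z(\bar x)=S_L(\bar x)=\{\bar y\}$. Hence the tangent cone is trivial, $T_{Z(\bar x)}(\bar y)=\{0\}$, and its dual cone in the sense of \eqref{eq:dual_cone} is all of $\mathbb{R}^m$.

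The GCQ at $\bar y$ therefore gives $\left(L_{Z(\bar x)}(\bar y)\right)^*=\mathbb{R}^m$. If a cone $K$ has $K^*=\mathbb{R}^m$, then every $d\in K$ satisfies $v^\top d\geq 0$ for all $v$, which forces $d=0$. We conclude $L_{Z(\bar x)}(\bar y)=\{0\}$.

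\emph{Step 2.} We write out the linearized cone explicitly. Let $\mathcal{I}:=\{i\mid g_i(\bar x,\bar y)=0\}$. The constraint $f(\bar x,\cdot)-\varphi_L(\bar x)\leq 0$ is active at $\bar y$, since $\bar y\in S_L(\bar x)$. Set
\[
L_Y:=\left\{d\in\mathbb{R}^m\left|~\nabla_y g_i(\bar x,\bar y)^\top d\leq 0\ (i\in\mathcal{I}),\ \nabla_y h(\bar x,\bar y)d=0\right.\right\}.
\]
This is the linearized cone of $Y(\bar x)$ at $\bar y$. Then
\[
L_{Z(\bar x)}(\bar y)=L_Y\cap\{d\mid \nabla_y f(\bar x,\bar y)^\top d\leq 0\}.
\]
By Step 1, every $d\in L_Y\setminus\{0\}$ satisfies $\nabla_y f(\bar x,\bar y)^\top d>0$. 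In particular $\nabla_y f(\bar x,\bar y)^\top d\geq 0$ for all $d\in L_Y$, that is, $\nabla_y f(\bar x,\bar y)\in L_Y^*$.

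\emph{Step 3.} We identify $L_Y^*$ through Farkas' lemma. Since $L_Y$ is a polyhedral cone, its dual is the finitely generated and hence closed cone
\[
L_Y^*=\left\{-\textstyle\sum_{i\in\mathcal{I}}u_i\nabla_y g_i(\bar x,\bar y)-\nabla_y h(\bar x,\bar y)^\top v\;\middle|\;u_i\geq 0,\ v\in\mathbb{R}^q\right\}.
\]
Applying this to $\nabla_y f(\bar x,\bar y)$ yields $u_i\geq 0$ for $i\in\mathcal{I}$ and some $v$. Extending $u$ by $u_i:=0$ for $i\notin\mathcal{I}$ gives
\[
\nabla_y f(\bar x,\bar y)+\nabla_y g(\bar x,\bar y)^\top u+\nabla_y h(\bar x,\bar y)^\top v=0 .
\]
This choice also gives $u^\top g(\bar x,\bar y)=0$. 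Feasibility $g(\bar x,\bar y)\leq 0$ and $h(\bar x,\bar y)=0$ holds because $\bar y\in Y(\bar x)$. Hence $(u,v)\in\Lambda_L(\bar x,\bar y)$.

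The main point to get right is Step 2. One must use the fact that the extra constraint $f-\varphi_L\leq 0$ is active, so that it enters the linearized cone, and then convert $L_{Z(\bar x)}(\bar y)=\{0\}$ into the sign condition on $L_Y$. Step 3 is standard linear duality (Farkas), and it needs no constraint qualification for $Y(\bar x)$ itself. Note that convexity from Assumption~\ref{Assumption1} is not even needed here: the uniqueness $S_L(\bar x)=\{\bar y\}$ together with the GCQ does all the work.
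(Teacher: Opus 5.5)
Your proposal is correct and follows the paper's proof essentially step for step. Both arguments use the trivial tangent cone and then the GCQ to force $L_{Z(\bar x)}(\bar y)=\{0\}$. Both then use the active value-function constraint to place $\nabla_y f(\bar x,\bar y)$ in $\left(L_{Y(\bar x)}(\bar y)\right)^*$, and finish with Farkas' lemma. The only difference is that you get $L_{Z(\bar x)}(\bar y)=\{0\}$ directly, by testing with $v=-d$, whereas the paper invokes the bipolar theorem; your route is slightly more elementary and does not need closedness or convexity of the cone.
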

\begin{proof}
Since $Z(\bar x)=S_L(\bar x)=\{\bar y\}$, it holds that $T_{Z(\bar x)}(\bar y)=\{0\}$. Hence, thanks to the GCQ,
\[
\left(L_{Z(\bar x)}(\bar y)\right)^* = \left(T_{Z(\bar x)}(\bar y)\right)^* =\mathbb{R}^m.
\]
Since $L_{Z(\bar x)}(\bar y)$ is a closed convex cone, 
$
L_{Z(\bar x)}(\bar y)
=
\left(
\left(L_{Z(\bar x)}(\bar y)\right)^*
\right)^*
=
\{0\}.
$ Moreover, 
\begin{equation}\label{eq:LZ-cone}
    L_{Z(\bar x)}(\bar y)
=
\left\{
d_y\in L_{Y(\bar x)}(\bar y)
\ \middle|\
\nabla_y f(\bar x,\bar y)^\top d_y\leq0
\right\}
\end{equation}
given that the constraint
$
f(\bar x,y)-\varphi_L(\bar x)\leq0
$
is active at $\bar y$. Combining \eqref{eq:LZ-cone} with the fact that $L_{Z(\bar x)}(\bar y)=\{0\}$, it holds that $\nabla_y f(\bar x,\bar y)^\top d_y>0$ if $d_y\in L_{Y(\bar x)}(\bar y)\setminus\{0\}$. Hence, 
\[
\nabla_y f(\bar x,\bar y)^\top d_y\geq 0 \;\mbox{ for all }\; d_y\in L_{Y(\bar x)}(\bar y).
\]
Thus, based on the dual cone definition \eqref{eq:dual_cone}, we clearly have that 
\begin{equation}\label{eq:Dual_Incl}
    \nabla_y f(\bar x,\bar y)
\in
\left(L_{Y(\bar x)}(\bar y)\right)^*.
\end{equation}
Furthermore, note that the linearized cone to $Y(\bar x)$ at $\bar y$ is
\[
L_{Y(\bar x)}(\bar y)
=
\left\{
d_y\in\mathbb{R}^m
\ \middle|\
\begin{aligned}
\nabla_y g_i(\bar x,\bar y)^\top d_y&\leq0,
&&i\in I_g(\bar x,\bar y)\\
\nabla_y h_j(\bar x,\bar y)^\top d_y&=0,
&&j=1,\ldots,q
\end{aligned}
\right\},
\]
where $I_g(\bar x,\bar y)
:=
\left\{
i\in\{1,\ldots,p\}
\ \middle|\
g_i(\bar x,\bar y)=0
\right\}$, and its dual cone can be written as 
\[
\begin{array}{rll}
 \left(L_{Y(\bar x)}(\bar y)\right)^* & = & \Big\{a\in \mathbb{R}^m|\; 
 \exists \lambda_i\geq0 \mbox{ for } i\in I_g(\bar x,\bar y) \mbox{ and } \exists \mu\in\mathbb{R}^q:\\[0.5ex]
                                      &   & \qquad \qquad \left. a= -\sum_{i\in I_g(\bar x,\bar y)}
\lambda_i\nabla_y g_i(\bar x,\bar y) - \sum_{j=1}^{q} \mu_j\nabla_y h_j(\bar x,\bar y)\right\}.
\end{array}
\]
Combining this with \eqref{eq:Dual_Incl}, we clearly have that $\Lambda_L(\bar x, \bar y) \neq \emptyset$. \qed
\end{proof}

We are now ready to show that the  SOSC for problem \eqref{eq:tSLR} fails at any of its KKT points. 
\begin{proposition}\label{prop:SOSCfail}
     Let $\bar\zeta:=(\bar x, \bar y, \bar z, \bar u, \bar v, \bar w)$ satisfy the KKT conditions \eqref{KKT-U-1}--\eqref{KKT-L-3} with the Lagrange multipliers $\bar\alpha$ and $\bar\beta$. Then, the SOSC for problem \eqref{eq:tSLR} fails at $(\bar\zeta,\bar\alpha,\bar\beta)$. %
\end{proposition}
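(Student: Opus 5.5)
The plan is to argue by contradiction. Suppose the SOSC holds at $(\bar\zeta,\bar\alpha,\bar\beta)$. The aim is then to exhibit a nonzero direction $d\in\mathfrak{C}(\bar\zeta,\bar\alpha)$ with $d^\top\nabla^2_{\zeta\zeta}\mathcal L(\bar\zeta,\bar\alpha,\bar\beta)d=0$. The direction will be a pure multiplier direction $d=(0,0,0,d_u,d_v,d_w)$, modelled on the flat ray of Proposition~\ref{prop:flat_ray}.

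\textbf{Step 1 (reduction via Lemma~\ref{lem:SOSC-implies-negative-definiteness}).} The SOSC gives $\nabla^2_{yy}\mathcal L_{p_D}(\bar\zeta)\prec0$. The first condition in \eqref{KKT-I-1} then forces $\bar\beta_{\mathcal L_{p_D}}=0$, so by \eqref{eq:alpha_u_alpha_w} we get $\bar\alpha_u=-g(\bar x,\bar y)$ and $\bar\alpha_w=f(\bar x,\bar z)-f(\bar x,\bar y)$. Conditions \eqref{KKT-I-1}--\eqref{KKT-I-3} then reduce to primal feasibility of $\bar y$ in $Y(\bar x)$, together with $f(\bar x,\bar y)\le f(\bar x,\bar z)$, complementarity $\bar u^\top g(\bar x,\bar y)=0$, and $\bar w\,(f(\bar x,\bar y)-f(\bar x,\bar z))=0$.

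\textbf{Step 2 (the curvature is zero along multiplier directions).} $\mathcal L_{p_D}$ is affine in $(u,v,w)$, and so is $\nabla_y\mathcal L_{p_D}$. The constraints $-u\le0$ and $-w\le0$ are linear, and the remaining constraints do not involve $(u,v,w)$. Hence the $(u,v,w)$-block of $\nabla^2_{\zeta\zeta}\mathcal L$ vanishes, and every direction of the form $d=(0,0,0,d_u,d_v,d_w)$ has zero curvature. It therefore only remains to find such a nonzero $d$ in the critical cone. By \eqref{eq:Critial_Cone_tSLR}, membership requires three things:
\begin{itemize}
\item[(i)] $\nabla_yg(\bar x,\bar y)^\top d_u+\nabla_yh(\bar x,\bar y)^\top d_v+d_w\nabla_yf(\bar x,\bar y)=0$;
\item[(ii)] $d_{u_i}\ge0$ whenever $\bar u_i=0$, and $d_{u_i}=0$ whenever in addition $\bar\alpha_{u,i}>0$;
\item[(iii)] the analogous sign condition for $d_w$.
\end{itemize}

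\textbf{Step 3 (case $\bar w>0$).} Here $f(\bar x,\bar y)=f(\bar x,\bar z)$. Dividing \eqref{KKT-L-1}--\eqref{KKT-L-3} by $\bar w$ shows that $(\lambda,\mu):=(\bar\alpha_g/\bar w,\bar\beta_h/\bar w)$ is a lower-level KKT multiplier at $\bar z$. Since $(LL(\bar x))$ is convex, $\bar z\in S_L(\bar x)$, and then $\bar y\in S_L(\bar x)$ as well. For convex programs the KKT multiplier set coincides for all optimal points (saddle-point argument), so $(\lambda,\mu)\in\Lambda_L(\bar x,\bar y)$. Take $d=(0,0,0,\lambda,\mu,1)$; this is exactly the ray of Proposition~\ref{prop:flat_ray}, so (i) holds. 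For (ii), if $\bar\alpha_{u,i}=-g_i(\bar x,\bar y)>0$ then complementarity gives $\lambda_i=0$. For (iii), the constraint $-w\le0$ is inactive because $\bar w>0$. So $d\in\mathfrak C(\bar\zeta,\bar\alpha)\setminus\{0\}$ with zero curvature, which contradicts the SOSC.

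\textbf{Step 4 (case $\bar w=0$, the main obstacle).} Now $\bar\alpha_w=f(\bar x,\bar z)-f(\bar x,\bar y)$ may be positive, which would force $d_w=0$. Moreover $\bar z$ is then only a point of $Y(\bar x)$ carrying a Fritz--John type relation \eqref{KKT-L-1}. I would first try a pure $(d_u,d_v)$ direction with $d_w=0$ solving (i) on the inactive indices. Failing that, I would use the fact that the objective no longer depends on $z$ when $\bar w=0$. This suggests two options: show that the $zz$-curvature $\sum_i\bar\alpha_{g,i}\nabla^2_{zz}g_i$ vanishes on some tangent direction of the face of $Y(\bar x)$ selected by $\bar\alpha_g$, or reduce to Step 3 by perturbing $w$ upward. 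When $\bar\alpha_w=0$, that perturbation is the ray $d=(0,0,0,\lambda,\mu,1)$ with $(\lambda,\mu)\in\Lambda_L(\bar x,\bar y)$, whose nonemptiness would come from Lemma~\ref{lem:Non_Emptyness} or the GCQ in Assumption~\ref{Assumption1}(5). Making this case airtight is where I expect the real work to lie.
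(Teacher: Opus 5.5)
Your Steps 1--3 are correct. For $\bar w>0$ your route is slightly more direct than the paper's. The paper uses the SOSC along pure $z$-directions to make $S_L(\bar x)$ a singleton, hence $\bar y=\bar z$, so that the lower-level multiplier transfers from $\bar z$ to $\bar y$; it then contradicts strict local optimality via Proposition~\ref{prop:flat_ray}. You instead transfer the multiplier using the invariance of the KKT multiplier set over the solution set of a convex program, and you exhibit the zero-curvature critical direction $(0,0,0,\lambda,\mu,1)$ explicitly.

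The genuine gap is Step~4: the case $\bar w=0$ is not proved, and none of the options you list closes it.
\begin{itemize}
\item A priori $\bar\alpha_w=f(\bar x,\bar z)-f(\bar x,\bar y)$ can be positive, which forces $d_w=0$.
\item A nonzero pure $(d_u,d_v)$ direction then need not exist. Since $\bar u^\top g(\bar x,\bar y)=0$, condition (ii) forces $d_{u_i}=0$ whenever $g_i(\bar x,\bar y)<0$. So if the gradients $\nabla_y g_i(\bar x,\bar y)$, $i\in I_g(\bar x,\bar y)$, and $\nabla_y h_j(\bar x,\bar y)$ are linearly independent, (i) gives $d_u=0$ and $d_v=0$.
\item You cannot expect $\sum_i\bar\alpha_{g,i}\nabla^2_{zz}g_i(\bar x,\bar z)$ to vanish on a nonzero critical $z$-direction either, because that slice of the critical cone may be $\{0\}$ (e.g.\ when $Y(\bar x)$ is a single point cut out by $m$ independent equations).
\item You give no mechanism that produces $\bar\alpha_w=0$, or the hypothesis $S_L(\bar x)=\{\bar y\}$ that Lemma~\ref{lem:Non_Emptyness} needs.
\end{itemize}

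The missing idea is to use the $z$-directions contrapositively. Take $d=(0,0,d_z,0,0,0)$. The constraints involving only $x$, $u$, $w$, and the equation $\nabla_y\mathcal L_{p_D}=0$ (which does not depend on $z$), are trivially satisfied by such $d$. Hence the $z$-slice of $\mathfrak C(\bar\zeta,\bar\alpha)$ is exactly the critical cone, with respect to $\bar\alpha_g$, of the feasibility problem of minimizing $0$ over $z\in Y(\bar x)$. By \eqref{KKT-L-1}--\eqref{KKT-L-3} with $\bar w=0$, $\bar z$ is a KKT point of that problem.

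Moreover, $\bar w=0$, $h$ is affine in $z$, and $\nabla_y\mathcal L_{p_D}$ is independent of $z$. Therefore $\nabla^2_{zz}\mathcal L(\bar\zeta,\bar\alpha,\bar\beta)=\sum_i\bar\alpha_{g,i}\nabla^2_{zz}g_i(\bar x,\bar z)$, which is the Hessian of that problem's Lagrangian. So the SOSC for \eqref{eq:tSLR} yields the SOSC for the feasibility problem. Then $\bar z$ is a strict local minimizer of a constant function, i.e.\ an isolated point of the convex set $Y(\bar x)$, so $Y(\bar x)=\{\bar z\}$.

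The rest follows:
\begin{itemize}
\item Since $\bar y\in Y(\bar x)$ by Step~1, $\bar y=\bar z$. Hence $\bar\alpha_w=0$ and $S_L(\bar x)=\{\bar y\}$.
\item By Assumption~\ref{ass:Sp_domain}, $S_p(\bar x)=\{\bar y\}$, so Assumption~\ref{Assumption1}(5) gives the GCQ in $Z(\bar x)$ at $\bar y$.
\item Lemma~\ref{lem:Non_Emptyness} then supplies $(\lambda,\mu)\in\Lambda_L(\bar x,\bar y)$.
\item Your direction $(0,0,0,\lambda,\mu,1)$ now lies in $\mathfrak C(\bar\zeta,\bar\alpha)$. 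Condition (iii) only asks $d_w\geq 0$ once $\bar\alpha_w=0$, and (i), (ii) are checked exactly as in Step~3. This gives the contradiction.
\end{itemize}
This is precisely how the paper handles Case~2.
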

\begin{proof}
By contradiction, we assume that the SOSC for \eqref{eq:tSLR} holds at $(\bar\zeta,\bar\alpha,\bar\beta)$. Then, it follows from 
Lemma~\ref{lem:SOSC-implies-negative-definiteness} that 
$
\nabla_{yy}^{2}\mathcal{L}_{p_D}(\bar\zeta)\prec0.
$
Hence, the first block of \eqref{KKT-I-1} implies that 
$
\bar\beta_{\mathcal{L}_{p_D}}=0.
$

\textbf{Case 1:} Suppose that $\bar w>0$. Then, combining  this with $\bar\beta_{L_{pD}}=0$, it follows from  \eqref{KKT-I-1}--\eqref{KKT-I-3} that we have 
\begin{equation}\label{eq:result_equations}
    g(\bar x, \bar y)\leq 0, \;\, \bar u\geq 0, \;\, \bar{u}^\top g(\bar x, \bar y)=0, \;\,  h(\bar x, \bar y)=0, \;\, f(\bar x, \bar y)-f(\bar x, \bar z)=0. 
\end{equation}
Additionally, considering $\bar w >0$ again, we have from \eqref{KKT-L-1}--\eqref{KKT-L-3} that $(\alpha'_g,\beta'_h):=(\bar\alpha_g,\bar\beta_h)/\bar w\in\Lambda_L(\bar x, \bar z)$. Hence, by the convexity of the lower-level problem in Assumption \ref{Assumption1}(2)--(4), it follows that $\bar z \in S_L(\bar x)$. Thus, combining this with the last equation in \eqref{eq:result_equations}, 
\begin{equation}\label{eq:result_equations_2}
f(\bar x, \bar y) = f(\bar x, \bar z)\leq \varphi_L(\bar x) \leq  f(\bar x, \bar y).    
\end{equation}
Furthermore, with $\bar y\in Y(\bar x)$ from \eqref{eq:result_equations}, we have from \eqref{eq:result_equations_2} that $\bar y \in S_L(\bar x)$.
Moreover, it is not hard to see that the vectors $d_z\in\R^m$ with $(0,0,d_z,0,0,0)\in\mathfrak{C}(\bar\zeta, \bar\alpha)$ form the critical cone of \eqref{eq:LL} for $x:=\bar x$ at $\bar z$ with respect to the multiplier vector $\alpha_g'$. Therefore, the SOSC with the Lagrangian $\mathcal{L}(\zeta, \alpha, \beta)$
of \eqref{eq:tSLR} from \eqref{eq:Lagrangian_tSLR} implies $d_z^\top \nabla^2_{zz} \mathcal{L}(\bar\zeta, \bar\alpha, \bar\beta)d_z>0$ for all $d_z\in\mathfrak{C}(\bar z,\alpha_g')\setminus\{0\}$. In view of
\begin{align*}
    \nabla^2_{zz} \mathcal{L}(\bar\zeta, \bar\alpha, \bar\beta)&= \bar w\nabla^2_{zz}f(\bar x, \bar z)+\nabla^2_{zz}g(\bar x, \bar z)\bar\alpha_g+\nabla^2_{zz} h(\bar x, \bar z)\bar\beta_h\\
    & =  \bar w\left(\nabla^2_{zz}f(\bar x, \bar z)+\nabla^2_{zz}g(\bar x, \bar z)\alpha_g'+\nabla^2_{zz} h(\bar x, \bar z)\beta_h'\right),
\end{align*}
one obtains the SOSC for $\bar z$ to be a unique local minimal point of problem \eqref{eq:LL} for $x:=\bar x$. In view of $\bar z\in S_L(\bar x)$, the latter set is thus a singleton. Therefore, it holds that $\bar y = \bar z$. Subsequently, considering the system \eqref{KKT-L-1}--\eqref{KKT-L-3} once again,  it holds that $\Lambda_L(\bar x, \bar y) \neq \emptyset$. 
Hence, applying Proposition \ref{prop:flat_ray}, it follows that $\bar \zeta$ is not a strict local optimality solution of  \eqref{eq:tSLR}. This contradicts the fulfillment of the SOSC for problem \eqref{eq:tSLR}  at $(\bar\zeta,\bar\alpha,\bar\beta)$.

\textbf{Case 2:} Suppose that $\bar w = 0$. Then, proceeding in a way similar to the previous case, it follows that with $\bar\beta_{\mathcal{L}_{p_D}}=0$, we get from equations \eqref{KKT-I-1}--\eqref{KKT-I-3} that 
\begin{equation}\label{eq:result_equations_V2}
    g(\bar x, \bar y)\leq 0, \;\, \bar u\geq 0, \;\, \bar{u}^\top g(\bar x, \bar y)=0, \;\,  h(\bar x, \bar y)=0, \;\, f(\bar x, \bar y)-f(\bar x, \bar z) \leq 0. 
\end{equation}
Moreover, with $\bar w =0$, it follows from the system \eqref{KKT-L-1}--\eqref{KKT-L-3} that $\bar z$  is a KKT point of the lower-level feasibility problem
\begin{equation}\label{eq:LL0}\tag{\text{$LL^0(x)$}}
    \min_z\,0\ \text{ s.t. }\ z\in Y(\bar x)
\end{equation}
with multipliers $\bar\alpha_g$ and $\bar\beta_h$. 
Since $Y(\bar x)$ is convex, $\bar z$  is also a global minimal point. Moreover, given that the vectors $d_z\in\R^m$ with $(0,0,d_z,0,0,0)\in\mathfrak{C}(\bar\zeta,\alpha)$ form the critical cone of \eqref{eq:LL0} at $\bar z$, we obtain from the fulfillment of the SOSC at $\bar z$ that this point is a unique local minimal point of \eqref{eq:LL0}. 
This is only possible if $Y(\bar x)$ is a singleton, implying that also its subset $S_L(\bar x)$ is a singleton. Hence, $S_p(\bar x) = S_L(\bar x) = Y(\bar x)=\{\bar z\}$. Subsequently, $f(\bar x, \bar z) \leq \varphi_L(\bar x)$. Combining this with \eqref{eq:result_equations_V2}, %
\[
 g(\bar x, \bar y)\leq 0, \;\,  h(\bar x, \bar y)=0, \;\, \varphi_L(\bar x) \leq f(\bar x, \bar y) \leq f(\bar x, \bar z) \leq \varphi_L(\bar x). 
\]
This implies that $\bar y\in Y(\bar x)$ and $f(\bar x, \bar y)=\varphi_L(\bar x)$. Thus, $\bar y\in S_L(\bar x)$. This implies that $\bar y =\bar z$. 
Hence, $S_L(\bar x)=\{\bar y\}$, and we have from a combination of Assumption \ref{Assumption1}(5) (while accounting for the fact that $S_p(\bar x) \subset S_L(\bar x)=\{\bar y\}$) and Lemma \ref{lem:Non_Emptyness} that $\Lambda_L(\bar x, \bar y) \neq \emptyset$. Similarly to the previous case (Case~1), we get a  contradiction  by applying Proposition \ref{prop:flat_ray}. %
\qed
\end{proof}

Considering Remark \ref{rem:dual_y_not_optimal} and Remark \ref{rem:z-in-SL}, it is not necessarily expected that for an optimal solution $\bar\zeta:=(\bar x, \bar y, \bar z, \bar u, \bar v, \bar w)$ of problem \eqref{eq:tSLR}, one would have $\bar y\in S_L(\bar x)$, $\bar z\in S_L(\bar x)$, or $(\bar u, \bar v)\in \Lambda_L(\bar x, \bar y)$. The proof of Proposition \ref{prop:SOSCfail} shows, however, that all three conditions would be enforced by the SOSC in the optimal solution, leading to a contradiction in view of %
Proposition \ref{prop:flat_ray}. 
This illustrates that having these three conditions satisfied simultaneously is actually not a positive situation for problem \eqref{eq:tSLR}. In particular, the systematic failure of the SOSC must be expected to interfere with the convergence of certain types of numerical methods for \eqref{eq:tSLR} as a consequence of the next result, which is a refinement of Theorem~\ref{the:tSLR_SONC} (implied by Proposition~\ref{prop:SOSCfail}).

\begin{corollary}\label{cor:failure_second_order_method}
Under the assumptions of Theorem~\ref{the:tSLR_SONC}, its assertions as well as the following two statements are satisfied: 
\begin{itemize}
\item[(a)] There exists some $d\in \mathfrak{C}(\zeta, \alpha)\setminus\{0\}$ with $d^\top\nabla^2_{\zeta\zeta} \mathcal{L}\left(\zeta, \alpha, \beta\right)d=0$.
\item[(b)] Let $\alpha_i>0$ for all $i$ with $\tilde{G}_i(\zeta)=0$ (i.e., strict complementarity holds) and let the rows of the matrix $B$ collect the derivatives $\nabla\tilde{G}_i(\zeta)^\top$ with $\tilde{G_i}(\zeta)=0$ and $\nabla\tilde{H_j}(\zeta)^\top$, $j=1\ldots,q$. Then the Jacobian $\begin{pmatrix}
            \nabla^2_{\zeta\zeta} \mathcal{L}\left(\zeta, \alpha, \beta\right) & B^\top\\ B & 0
        \end{pmatrix}$ of the equations in the KKT system \eqref{KKT-U-1}--\eqref{KKT-L-3} is singular.
\end{itemize}
\end{corollary}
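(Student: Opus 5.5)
The assertions of Theorem~\ref{the:tSLR_SONC} are taken over unchanged: since $\zeta$ is a local optimal solution at which LICQ holds, there are unique multipliers $(\alpha,\beta)$ satisfying the KKT system \eqref{KKT-U-1}--\eqref{KKT-L-3} together with \eqref{eq:SONC}. The plan is to derive (a) by combining \eqref{eq:SONC} with Proposition~\ref{prop:SOSCfail}, and then to obtain (b) from (a) by standard linear algebra on the KKT Jacobian.

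For (a), note that $\zeta$ is a KKT point with multipliers $(\alpha,\beta)$. Proposition~\ref{prop:SOSCfail} therefore shows that \eqref{eq:SOSC} fails at $(\zeta,\alpha,\beta)$. This gives some $d\in\mathfrak{C}(\zeta,\alpha)\setminus\{0\}$ with $d^\top\nabla^2_{\zeta\zeta}\mathcal{L}(\zeta,\alpha,\beta)d\le 0$. On the other hand, \eqref{eq:SONC} gives $d^\top\nabla^2_{\zeta\zeta}\mathcal{L}(\zeta,\alpha,\beta)d\ge 0$ for every critical direction. The two inequalities together force equality, which is (a).

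For (b), strict complementarity turns every inequality in \eqref{eq:Critial_Cone_tSLR} into an equality, so $\mathfrak{C}(\zeta,\alpha)=\ker B$ is a linear subspace. Write $M:=\nabla^2_{\zeta\zeta}\mathcal{L}(\zeta,\alpha,\beta)$. By \eqref{eq:SONC}, the quadratic form $q(d)=d^\top Md$ is positive semidefinite on $\ker B$, and by (a) it vanishes at some $d\neq0$ in $\ker B$. A minimizer of a positive semidefinite quadratic form on a subspace is a stationary point of that form, so $P Md=0$, where $P$ is the orthogonal projector onto $\ker B$. Equivalently, $Md\in(\ker B)^\perp=\operatorname{range}B^\top$, so $Md=-B^\top\mu$ for some $\mu$. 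It follows that
\[
\begin{pmatrix} M & B^\top\\ B & 0\end{pmatrix}\begin{pmatrix} d\\ \mu\end{pmatrix}=\begin{pmatrix} 0\\ 0\end{pmatrix}
\]
with $(d,\mu)\neq0$, so the matrix is singular.

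The main thing to check carefully is that this block matrix really is the Jacobian of the equations in the KKT system. Under strict complementarity the active inequalities $\tilde G_i(\zeta)=0$ are treated as equations, and their multipliers $\alpha_i$, together with $\beta$, are the unknowns differentiated alongside $\zeta$. Inactive constraints have zero multipliers and drop out. Apart from this bookkeeping, the only delicate point is the projection argument ($x^\top Mx\ge0$ on a subspace with equality at $d$ implies $PMd=0$), which follows from expanding $q(d+te)$ for $e\in\ker B$ and letting $t\to0$ from both sides.
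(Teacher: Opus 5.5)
Your proof is correct. Part (a) is the paper's argument verbatim: combine \eqref{eq:SONC} from Theorem~\ref{the:tSLR_SONC} with the failure of \eqref{eq:SOSC} guaranteed by Proposition~\ref{prop:SOSCfail}.

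For part (b), the paper also observes that strict complementarity turns $\mathfrak{C}(\zeta,\alpha)$ into the linear space $\ker B$. From there it takes a different route from yours:
\begin{itemize}
\item It restates (a) as singularity of the Hessian restricted to this space.
\item It uses LICQ to get full row rank of $B$.
\item It then cites a classical bordered-matrix lemma relating nonsingularity of the KKT matrix to nonsingularity of the reduced Hessian.
\end{itemize}
You instead prove the needed direction directly. Positive semidefiniteness of $d^\top M d$ on $\ker B$, together with $d^\top M d=0$, forces $e^\top M d=0$ for all $e\in\ker B$. Hence $Md=-B^\top\mu$ for some $\mu$, and $(d,\mu)$ is an explicit nonzero kernel vector.

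Your route has three advantages:
\begin{itemize}
\item It is self-contained.
\item It makes explicit the role of the semidefiniteness coming from \eqref{eq:SONC}. The paper's phrase ``can be restated as the singularity of the restriction'' uses this only tacitly; without semidefiniteness, a quadratic form vanishing at a nonzero vector of a subspace need not have a singular restriction.
\item It shows that full row rank of $B$ is not needed for this implication. If $B$ were rank-deficient, the block matrix would be singular anyway.
\end{itemize}
The cited lemma, in return, gives a full equivalence under LICQ, which is more than the corollary requires.
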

\begin{proof}
The assertion of part (a) immediately follows from Theorem~\ref{the:tSLR_SONC} and Proposition~\ref{prop:SOSCfail}. To see part (b), note that the strict complementarity assumption implies that the cone $\mathfrak{C}(\zeta, \alpha)$ is a linear space (the tangent space to the feasible set). Therefore, the assertion of part (a) can be restated as the singularity of the restriction of $\nabla^2_{\zeta\zeta} \mathcal{L}\left(\zeta, \alpha, \beta\right)$ to this tangent space. Since under the assumptions of Theorem~\ref{the:tSLR_SONC}, the LICQ holds at $\zeta$, the matrix $B$ possesses full row rank. The assertion now follows from \cite[Lemma 3.4]{MR787745} (see also \cite{MR907394}). \qed
\end{proof}

A take away of this singularity result is that to leverage on Theorem \ref{the:SOSC} for practical sufficient conditions for problem \eqref{eq:Pessimistic-Problem}, different types of characterizations for sufficient optimality for problem \eqref{eq:tSLR} would need to be explored, including possibly first order-type conditions. In addition, one may not expect second order methods to work for the algorithmic solution of \eqref{eq:tSLR}, but without further modifications of the reformulation one should rather resort to first order methods.

\section{Numerical illustrations}\label{sec:Algorithmic framework for numerical computations}
Considering the relationships  established in Section \ref{sec:Single-level-Reform} between \eqref{eq:Pessimistic-Problem} and \eqref{eq:tSLR}, we use six well-known examples from the literature to illustrate this connection between the two problems. We only consider the global relationship here; cf. Corollary \ref{cor:summary_relationships}(a). 
To proceed, we use the DIRECT algorithm introduced in \cite{jones1993lipschitzian}. However, since this algorithm can only handle box constraints, it is combined with a (quadratic) penalization (to get an unconstrained problem). 
More precisely, assuming that we are solving the  constrained problem \eqref{eq:min_standard}, it is approximated by the
bound-constrained penalized problem
\begin{equation}\label{eq:penalized_standard}
\min_{x\in[\ell,u]}
\Psi_{\rho}(x)
:=
\mathfrak{f}(x)+\rho\mathcal{V}(x),
\end{equation}
where, $\rho>0$ is the penalty parameter and $[\ell,u]$ is a finite search box.
For variables that are originally unbounded or one-sided bounded, sufficiently
large artificial bounds, $-B\leq x_i\leq B$ and $0\leq x_i\leq B$,  are introduced, 
as appropriate. The value of $B$ is increased whenever a candidate solution
lies close to an artificial boundary. Note that in \eqref{eq:penalized_standard}, $\mathcal{V}$ denotes the quadratic constraint-violation function 
\begin{equation}\label{eq:violation_measure}
\mathcal{V}(x)
:=
\sum_{i=1}^{p}
    \bigl[\max\{0,\mathfrak{g}_i(x)\}\bigr]^2
+
\sum_{j=1}^{q}
    \mathfrak{h}_j(x)^2.
\end{equation}

Observe that $\mathcal{V}(x)=0$ if and only if $x$ is feasible for
\eqref{eq:min_standard}. Hence, the global-search procedure consists of two phases. First, DIRECT is applied to
\[
\min_{x\in[\ell,u]}\mathcal{V}(x)
\]
to identify a point with small constraint violation. Second, DIRECT is applied
to the penalized objective $\Psi_\rho$ \eqref{eq:penalized_standard}. The first phase prevents an improvement in $\mathfrak f$ from concealing a large infeasibility during the penalized
search. DIRECT partitions the normalized box into hyperrectangles and samples their
centers. Hyperrectangles that are potentially optimal, based on both their
sampled objective values and their sizes, are subdivided. The method therefore
balances global exploration of large unexamined regions with local refinement
around promising points. This local refinement is done by applying the well-known sequential least squares programming (SLSQP) method \cite{kraft1988software} to \eqref{eq:min_standard}. Considering the combination of the \textit{D}IRECT method with the \textit{P}enalization and \textit{S}LSQP algorithm, we label the computational tool used for the experiments presented here as \textit{DPS}. %

Let $\bar{x}$ denote the candidate returned by the two DIRECT phases. It is
accepted as feasible when
$\sqrt{\mathcal{V}(\bar{x})}
\leq \varepsilon$, where $\varepsilon>0$ represents the feasibility tolerance. %
Overall, DPS is just a deterministic global-search heuristic
for the constrained problem \eqref{eq:min_standard}. DIRECT has global convergence properties
for the bound-constrained penalized problem under its standard assumptions \cite{jones1993lipschitzian}.
However, a quadratic penalty with finite $\rho$ is not generally exact.
Consequently, global minimization of \eqref{eq:penalized_standard} alone does
not provide an unconditional certificate of global optimality for
\eqref{eq:min_standard}. %
But, as it will be shown below, the performance of DPS confirms the finding of Corollary \ref{cor:summary_relationships}(a) that problem \eqref{eq:Pessimistic-Problem} can effectively be solved via the tSLR model introduced in this paper. 

The reason for using the DPS scheme rather than established global solvers for constrained optimization is the fact that it can be implemented with fully open-source tools available within the  \texttt{scipy.optimize} Python environment. %
We apply this framework to six well-known toy examples from the literature, and for which global optimal solutions have been reported. We label these problems as P1 (see, e.g., \cite[Example 2.1]{benchouk2025scholtes}), P2 (see, e.g., \cite[Example 3.1]{benchouk2026relaxation}), P3 (see, e.g., \cite[Principal Agent problem in Table 5]{antoniou2024delta}), P4 (see, e.g., \cite[Example 4.1]{dempe2018pessimistic}), P5 (see, e.g., \cite[Example 1]{zeng2020practical}), and P6 (see, e.g., \cite[Example 2]{zeng2020practical}). All the six examples satisfy  Assumption \ref{ass:polyhedral}; hence, we also apply DPS to the corresponding versions of the model \eqref{eq:tSLRLP}. Additionally, we apply DPS to the SLRs \eqref{eq:minmax_Bo_Zeng_SLR} and \eqref{eq:tSLRCC} to enable a comparison of their performance with that of our tSLRs. Out of all the existing methods in the literature presented in Section \ref{sec:Existing reformulations and numerical algorithms}, the SIP-PBDA \cite{wiesemann2013pessimistic} and TLVF-GSS1 \cite{strekalovsky2025one} algorithms seem to be the only ones that can directly approximate global optimal solutions. Hence, we also apply them to applicable problems from our six examples (i.e., P1 and P2 for SIP-PBDA and TLVF-GSS1 for the rest). Note that here, we also use the DPS scheme as global solver for the inner problem for SIP-PBDA (Algorithm 1 in \cite{wiesemann2013pessimistic}).

\begin{table}[htbp]%
\centering
\begin{tabular}{lcccccc}
\toprule
Method & P1 & P2 & P3 & P4 & P5 & P6 \\
\midrule
\textsc{MM-CC}    & \checkmark & \checkmark & \checkmark & \checkmark & \ding{55} & \ding{55} \\
\textsc{SP-CC}    & \ding{55} & \checkmark & \ding{55}$^\dagger$ & \checkmark & \checkmark & \ding{55} \\
\textsc{tSLR}     & \checkmark & \checkmark & \checkmark & \checkmark & \checkmark & \checkmark \\
\textsc{tSLR-LP}  & \checkmark & \checkmark & \checkmark & \checkmark & \checkmark & \checkmark \\
\textsc{SIP-PBDA} & \checkmark & \checkmark & -- & -- & -- & -- \\
\textsc{TLVF-GSS1} & -- & -- & \checkmark & \checkmark & \checkmark & \ding{55} \\
\bottomrule
\end{tabular}
\caption{Success (\checkmark, $x_{\mathrm{err}}\le10^{-3}$ and feasible) or
failure (\ding{55}) on each of of the problems P1, \ldots, P6; ``--'' marks a method outside
its applicability scope for that problem. %
Exact $x_{\mathrm{err}}$ values are given in
Fig.~\ref{fig:p1-6-heatmap}, not repeated here.
($^\dagger$: \textsc{SP-CC} on P3 has small $x_{\mathrm{err}}$
($9.6\times10^{-14}$) but is marked as failure because the returned point
does not satisfy the $10^{-4}$ feasibility tolerance.)}\label{tab:p1-6-xerr}
\end{table}

\paragraph{DPS initialization.} Each DPS solve for the corresponding versions of MM-CC, SP-CC, tSLR, and tSLR-LP  uses $12$ deterministic
starting points for the local SLSQP polish stage: one from a global
DIRECT search on the exterior-penalized objective (penalty weight
$\rho=10^{5}$, evaluation budget $N_{\max}=\min \{3000\hat{n}, \,80000\}$ with $\hat{n}$ being the number of variables of the corresponding problems), plus $11$ further points obtained by scaling every coordinate to the same fraction
$\tfrac{k}{12}$, where $k=1,\dots,11$, of its bound interval--a fixed,
reproducible design with no random seed. The best feasible SLSQP result
across all $12$ starts is retained. %
\begin{table}[htbp]
\centering
\begin{tabular}{lccccc}
\toprule
Method & Scope & Success rate & Mean $x_{\mathrm{err}}$ & Mean time (s) \\
\midrule
tSLR     & 6 & $\mathbf{100.0\%}$ & $2.4\times10^{-11}$ & $1.06$ \\
tSLR-LP   & 6 & $\mathbf{100.0\%}$ & $4.5\times10^{-12}$ & $0.53$ \\
SIP-PBDA  & 2 & $\mathbf{100.0\%}$ & $1.8\times10^{-15}$ & $0.11$ \\
TLVF-GSS1  & 4 & $75.0\%$           & $2.50$              & $1.00$ \\
MM-CC    & 6 & $66.7\%$           & $0.37$              & $1.09$ \\
SP-CC    & 6 & $50.0\%$           & $0.91$              & $7.78$ \\
\bottomrule
\end{tabular}
\caption{Aggregate performance over each method's applicable scope within
problems P1, \ldots, P6. Scope refers to the number of problems that the corresponding method can solve.}\label{tab:p1-6-agg}
\end{table}

\begin{figure}[htbp]
\centering
\includegraphics[width=0.75\textwidth]{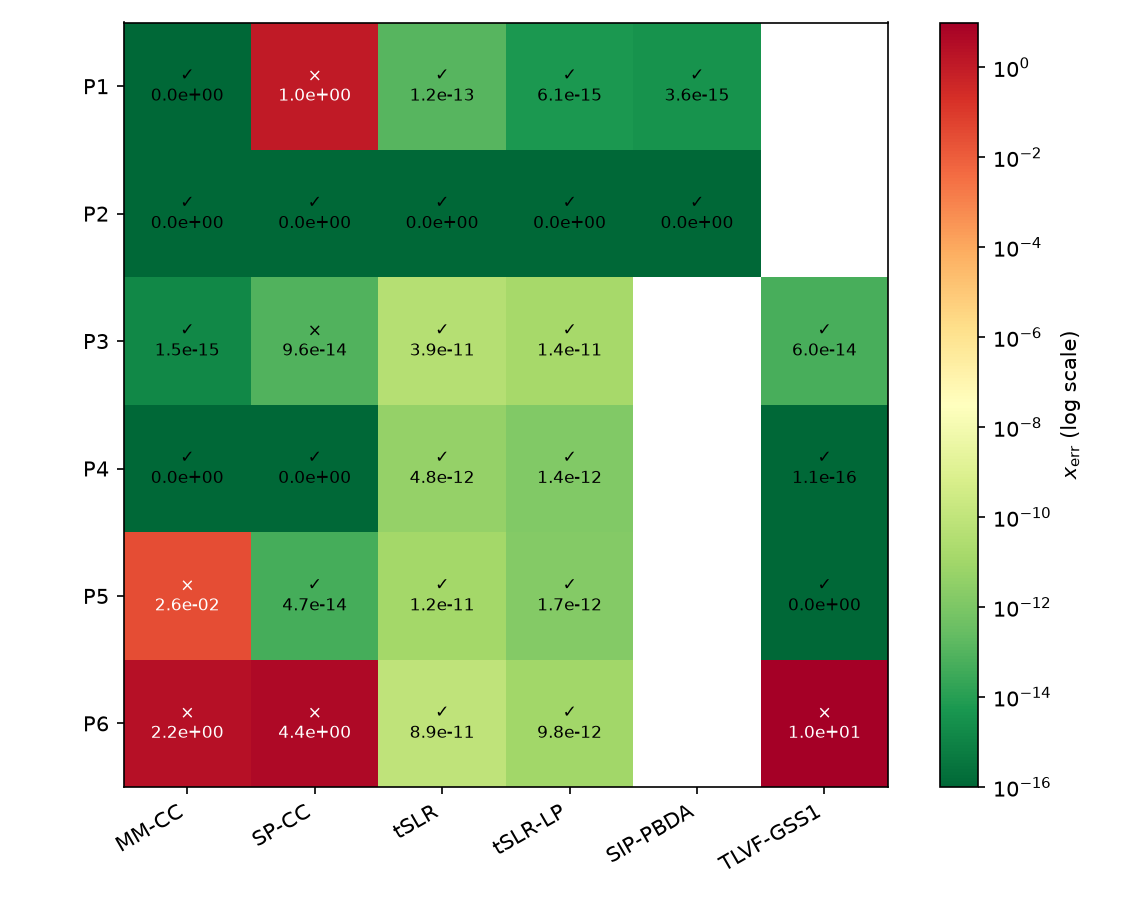}
\caption{Leader's solution error $x_{\mathrm{err}}$ across all problems and
all six methods (log color scale; green = accurate, red = inaccurate).
Empty cells represent cases outside a method's applicability scope.}
\label{fig:p1-6-heatmap}
\end{figure}

\paragraph{Performance measures.} Considering the focus of Corollary \ref{cor:summary_relationships}(a) on computing optimal solutions of problem \eqref{eq:Pessimistic-Problem} from the tSLR, the primary measure is the leader
solution error
\[
x_{\mathrm{err}} := \|\tilde x - x^{\ast}\|_\infty,
\]
the distance between the computed leader decision $\tilde x$ and the known
true optimizer $x^\ast$. %
A run of the DPS scheme will be said to be a \emph{success} if
$x_{\mathrm{err}} \le 10^{-3}$ and $\tilde x$ is feasible. Table \ref{tab:p1-6-xerr} presents the behavior of the methods w.r.t. their success or not when applied to the different problems (P1, \ldots, P6), while Table \ref{tab:p1-6-agg} provides the average error and time performances. 

\begin{figure}[htbp]
\centering
\includegraphics[width=0.85\textwidth]{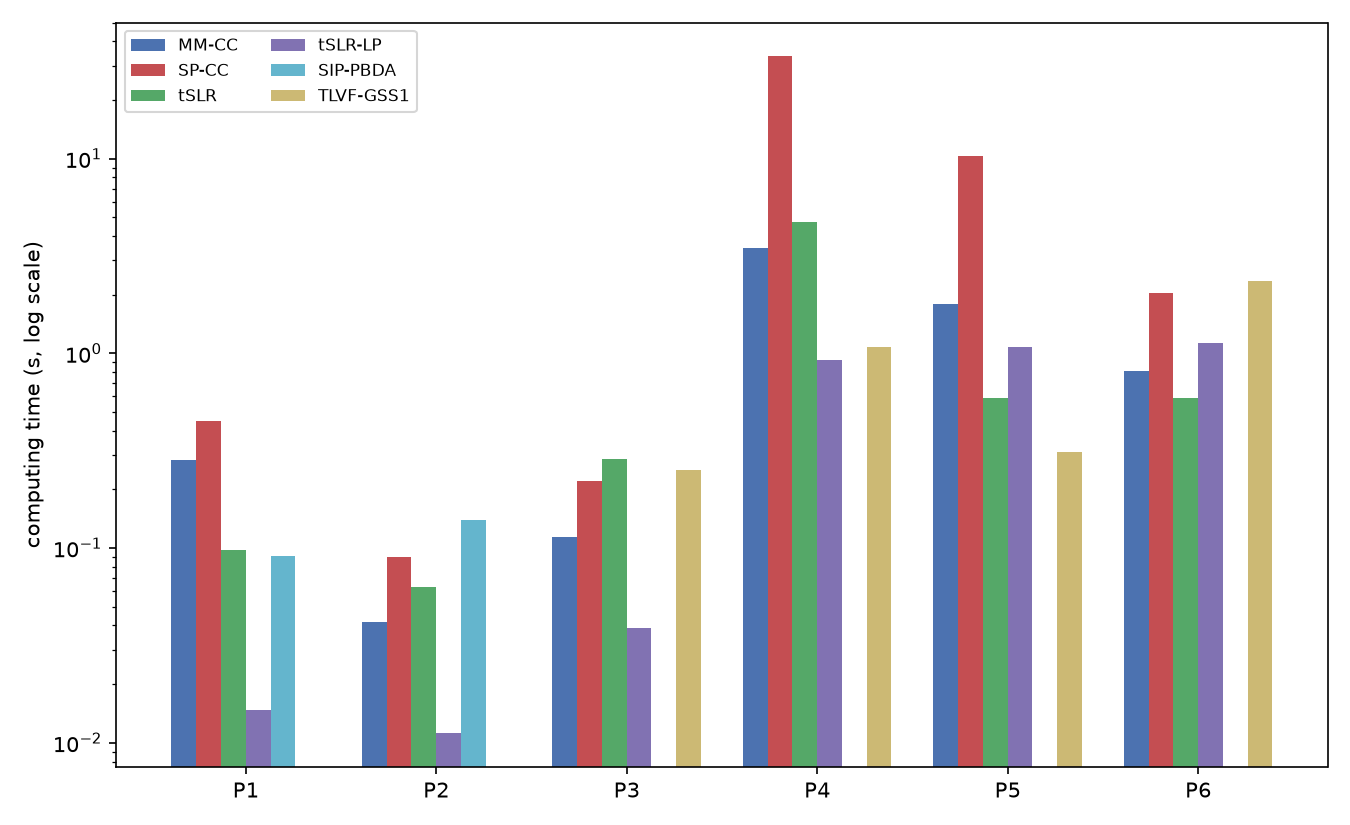}
\caption{%
A missing bar means the method is outside its applicability scope for that
problem. SP-CC's cost is the clear outlier, most visibly on
P4 ($33.5$s, an order of magnitude above every other method on that
problem), driven by its larger multiplier-variable count inflating each
SLSQP polish; tSLR-LP is consistently among the cheapest methods
run.}
\label{fig:p1-6-time}
\end{figure}

\paragraph{Discussion.}  tSLR and
tSLR-LP are the clear winners: both reach $100\%$ success with
error at or below machine precision on every problem, matching the
scope-limited bespoke methods \textsc{SIP-PBDA} and \textsc{TLVF-GSS1}
without their applicability restrictions. \textsc{TLVF-GSS1} is close
behind at $75\%$, missing only P6--consistent with a known
local-trap limitation of its global optimality conditions (GOC)--escape mechanism on that problem's
degenerate lower-level reported \cite{strekalovsky2025one}. \textsc{MM-CC} and \textsc{SP-CC}, the two generic
complementarity-penalty reformulations, are markedly less reliable
($66.7\%$ and $50.0\%$): both fail on problem P6, and each additionally fails
one of P1, P3, or P5 by landing on a spurious stationary point of the
MPCC reformulation rather than the true bilevel optimum--a known hazard
of penalty-based reformulations that exact gradients and multi-start
restarts substantially mitigate but do not eliminate on this problem set.
\textsc{SP-CC} is additionally the slowest method by a wide margin (mean
$7.78$s vs. \textit{less than} $1.1$s for every other method), owing to its larger multiplier-variable count
inflating the cost of each SLSQP polish -- most visibly on P4,
where it takes $33.5$s against at most a few seconds for every other
method. tSLR-LP is both the most reliable method and consistently
one of the cheapest, making it the strongest method overall on this
problem range by every measure reported here. The individual computing time for each method, where applicable, is provided in Fig. \ref{fig:p1-6-time}; and it is clear that except from P5, tSLR or tSLR-LP has the best computing time on each problem. 

\section{Concluding comments} 
In this paper, we introduce a single-level reformulation (SLR) for the pessimistic bilevel optimization problem, which we call a \textit{true single-level reformulation} (tSLR) because, unlike conventional SLRs for bilevel optimization, it involves neither complementarity conditions nor value functions. We establish rigorous global and local relationships between the tSLR and the original problem \eqref{eq:Pessimistic-Problem}. The proposed reformulation offers several important advantages. In particular, it contains no implicit variables and can satisfy the linear independence constraint qualification, whereas even the weaker Mangasarian--Fromovitz constraint qualification systematically fails for all known SLRs of optimistic and pessimistic bilevel programs. Moreover, \eqref{eq:tSLR} enables the derivation of new necessary optimality conditions for pessimistic bilevel programs. On the numerical side, six illustrative examples provide a proof of concept for computing globally optimal solutions with the tSLR. 
Future work will focus on developing numerical methods tailored to \eqref{eq:tSLR} and systematically comparing them, on suitable test problems, with the existing algorithms for pessimistic bilevel optimization reviewed in Section \ref{sec:Existing reformulations and numerical algorithms}. A major limitation, however, is the systematic failure of the classical second-order sufficient condition at Karush--Kuhn--Tucker points of \eqref{eq:tSLR} within the framework of this paper, as shown in Subsection \ref{sec:Second order sufficient conditions}. Consequently, second-order methods may struggle to solve \eqref{eq:tSLR} efficiently, as observed in Corollary \ref{cor:failure_second_order_method}.

As observed throughout Sections \ref{sec:Single-level-Reform} and \ref{sec:Optimality conditions}, some features of problem \eqref{eq:tSLR} require careful interpretation. In particular, 
variables $u$, $v$, and $w$ arise from the Wolfe-dual representation of
the intermediate problem, but they are not required to satisfy the
complementary-slackness conditions that would ordinarily be associated
with the corresponding constraints. Likewise, at an optimal solution
$(x,y,z,u,v,w)$ of problem \eqref{eq:tSLR}, the auxiliary point $y$ need not solve the
intermediate problem and $z$ need not solve the original lower-level
problem. Their role is to construct an exact single-level representation
from which the optimal leader decision $x$ can be recovered. Once an optimal $x$
has been obtained for problem \eqref{eq:Pessimistic-Problem}, corresponding optimal solutions for the intermediate- and lower-level problems \eqref{eq:IL} and \eqref{eq:LL}, respectively, can be generated (see Remark~\ref{rem:z-in-SL}). %

It is also important to note that the proposed true single-level reformulation  does not remove the fundamental existence difficulties of pessimistic bilevel optimization. The assumptions used
to establish the tSLR do not, by themselves, guarantee that either the
original problem or its reformulation possesses an optimal solution.
In particular, the feasible set of the tSLR is generally unbounded, and
the discontinuity and nonattainment phenomena caused, for example, by
a failure of lower semicontinuity of the lower-level optimal solution set-valued mapping
are inherited by the reformulation. This is unavoidable for an exact
reformulation: if the original pessimistic problem has a finite but
unattained infimum, the tSLR cannot be expected to manufacture an
optimal solution that does not exist.

Despite the failure of \eqref{eq:SOSC} for problem \eqref{eq:tSLR}, the broader message of this work is a very \textit{optimistic} one. Pessimistic bilevel programs possess genuine structural
difficulties and may fail to admit optimal solutions without additional
regularity assumptions. However, whenever an optimal solution does
exist, computing it need not be intrinsically more difficult than
solving the corresponding optimistic problem. Indeed, the tSLR suggests
that, for some problem classes, finding a pessimistic optimal solution
may even be easier, because it can be approached through a
reformulation without implicit variables, complementarity constraints, or optimal value functions, and which can satisfy the standard constraint qualifications that
are unavailable to conventional bilevel optimization reformulations.

These observations open several directions for future research. A first
priority could be the development of global and local algorithms specifically
tailored to the structure of problem~\eqref{eq:tSLR}, particularly, 
methods that exploit the close relationship between its objective
function and its constraint
\[
\nabla_y F(x, y) - \nabla_y g(x, y)^\top u - \nabla_y h(x, y)^\top v- w \nabla_y f(x, y) =0
\]
could be part of such future explorations. Specialized algorithms for the
reduced formulation~\eqref{eq:tSLRLP}, as well as extensions capable
of accommodating discrete or mixed-integer upper-level variables, also
deserve investigation. Finally, a broader computational study on
systematic benchmark collections is needed to compare these methods
with the existing algorithms reviewed in
Section~\ref{sec:Existing reformulations and numerical algorithms} and
to identify the classes of pessimistic bilevel programs for which the
tSLR provides the greatest advantage.

\section*{Statements and declarations}

\subsubsection*{Competing interests}
${}$\\[-5ex]
The authors have no competing interests to declare that are relevant to the content of this article.

\subsubsection*{Ethics approval}
${}$\\[-5ex]
Not applicable.

\subsubsection*{Consent to participate}
${}$\\[-5ex]
Not applicable.

\subsubsection*{Consent for publication}
${}$\\[-5ex]
Not applicable.

\subsubsection*{Data availability}
${}$\\[-5ex]
The data and code used to produce the numerical results in Section~\ref{sec:Algorithmic framework for numerical computations} are available from the corresponding author upon reasonable request.

\subsubsection*{Code availability}
${}$\\[-5ex]
The code used to produce the numerical results in Section~\ref{sec:Algorithmic framework for numerical computations} is available from the corresponding author upon reasonable request.

\subsubsection*{Author contributions}
${}$\\[-5ex]
Both authors contributed to the conception, analysis, and writing of this work, and read and approved the final manuscript.

\end{document}